\newif\ifmarkup
\newcommand{\markupadd}[1]{%
\ifmarkup
\textcolor{blue}{#1}%
\else
#1%
\fi
}
\newcommand{\markupdelete}[1]{%
\ifmarkup
\sout{\textcolor{red}{#1}}%
\else
\fi
}
\newcommand{\innp}[1]{\left\langle #1 \right\rangle}
\newcommand{\zeros}{\textbf{0}}
\newcommand{\vx}{\mathbf{x}}
\newcommand{\dd}{\mathrm{d}}
\newcommand{\cx}{\mathcal{X}}
\newcommand{\ch}{\mathcal{H}}
\newcommand{\chm}{\mathcal{H}_{\mathrm{M}}}
\newcommand{\cc}{\mathcal{C}}
\newcommand{\ccfsc}{\mathcal{C}^{f, \mathrm{sc}}}
\newcommand{\cz}{\mathcal{Z}}
\newcommand{\vxh}{\mathbf{\hat{x}}}
\newcommand{\vy}{\mathbf{y}}
\newcommand{\vz}{\mathbf{z}}
\newcommand{\vp}{\mathbf{p}}
\newcommand{\vq}{\mathbf{q}}
\newcommand{\vv}{\mathbf{v}}
\newcommand{\vw}{\mathbf{w}}
\newcommand{\vu}{\mathbf{u}}
\newcommand{\defeq}{\stackrel{\mathrm{\scriptscriptstyle def}}{=}}
\newcommand{\rr}{\mathbb{R}}
\DeclareMathOperator*{\argmin}{argmin}
\DeclareMathOperator*{\argmax}{argmax}
\crefname{hypothesis}{Hypothesis}{Hypotheses}
\title{Generalized Momentum-Based Methods:\\ 
A Hamiltonian Perspective\thanks{
\funding{Research presented in this paper was partially  supported by the NSF grant \#CCF-1740855, by the Mathematical Data Science program of the
Office of Naval Research under grant number N00014-18-1-2764, and by the Office of the Vice
Chancellor for Research and Graduate Education at the University of Wisconsin–Madison with funding from the
Wisconsin Alumni Research Foundation. Part of this work was done while the authors were visiting Simons Institute for the Theory of Computing.}}}
\author{Jelena Diakonikolas\thanks{Department of Computer Sciences, UW-Madison, Madison, WI 
  (\email{jelena@cs.wisc.edu}).}
\and Michael I.~Jordan\thanks{Department of Statistics and EECS, UC Berkeley, Berkeley, CA 
  (\email{jordan@cs.berkeley.edu}).}
}
\begin{document}

\maketitle
\begin{abstract}
    We take a Hamiltonian-based perspective to generalize Nesterov's accelerated gradient descent and Polyak's heavy ball method to a broad class of momentum methods in the setting  of (possibly) constrained minimization in \markupdelete{Banach} \markupadd{Euclidean and non-Euclidean normed vector spaces}. Our perspective leads to a generic and unifying nonasymptotic analysis of convergence of these methods in both the function value (in the setting of convex optimization) and in norm of the gradient (in the setting of unconstrained, possibly nonconvex, optimization). Our approach relies upon a time-varying Hamiltonian that produces generalized momentum methods as its equations of motion. 
    The convergence analysis for these methods is intuitive and is based on the conserved quantities of the time-dependent Hamiltonian. 
\end{abstract}

\begin{keywords}
  acceleration, momentum-based methods, stationary points, Hamiltonian dynamics
\end{keywords}


%
%
\section{Introduction}

Accelerated, momentum-based, methods enjoy optimal iteration complexity for the minimization of  smooth convex functions over convex sets, which has led to their broad acceptance as algorithmic primitives in many applications, notably applications in machine learning. Further, decades of empirical experience suggest that momentum methods are capable of exploring multiple local minima (see, e.g.,~\cite{attouch2000heavy-esc,bostan2018accelerated,xu2018accelerated}), which gives them advantages over gradient flows.  The latter have optimal worst-case complexity for convergence to stationary points, but are strongly attracted to local minima. Moreover, recent theoretical results have established that momentum methods escape saddle points faster than standard gradient descent~\cite{jin2017accelerated,oneil2017behavior}, providing further evidence of their value in nonconvex optimization.

The first (locally) accelerated method for smooth and strongly convex minimization\footnote{In the special case of quadratic objectives, Polyak's method attains the globally-accelerated (iteration-complexity optimal) convergence rate.} is from the 1960s and is due to Polyak~\cite{polyak1964some}. Working with continuous-time dynamics, Polyak introduced the following (momentum-based) second-order ordinary differential equation (ODE):
\begin{equation}\tag{HBD}\label{eq:HBD}
    \ddot{\vx}_t  = \alpha_1 \dot{\vx}_t + \alpha_2 \nabla f(\vx_t),
\end{equation}
where $f$ is the function being minimized and $\alpha_1,\, \alpha_2$ are constants. He also studied its two-step discretization, which can be written as:
\begin{equation}\label{eq:HB}\tag{HB}
    \vx_{k+1} = \vx_k -\alpha \nabla f(\vx_k) + \beta (\vx_k - \vx_{k-1}),
\end{equation}
where $\alpha,\, \beta$ are constants. In particular, under a suitable choice of $\alpha,\, \beta$, \cite{polyak1964some} showed that, when initialized ``sufficiently close'' to the optimal solution $\vx^*,$ the method converges at rate $(\frac{1-\sqrt{\kappa}}{1+\sqrt{\kappa}})^k$ 
where $\kappa$ denotes the condition number of the objective function $f$. This convergence rate was later proved to be optimal and globally achievable~\cite{nemirovskii1983problem}.

For the setting of smooth minimization, Nesterov~\cite{Nesterov1983} discovered a method that has an optimal convergence rate of $1/k^2.$ Nesterov~\cite{Nesterov1983} also showed that when this method is applied to strongly convex functions with a known strong convexity parameter and coupled with scheduled restart, it leads to the optimal rate for the class of smooth and strongly convex minimization problems. In later work (see, e.g., the textbook~\cite{nesterov2013introductory} and references therein), Nesterov introduced a separate, more direct method for the minimization of smooth and strongly convex functions that enjoys the same optimal rate as~\eqref{eq:HB} but \markupadd{for which the optimal rate holds globally while the algorithm has} \markupdelete{that holds globally and has} the same continuous-time limit~\eqref{eq:HBD}.

A flurry of research has followed these seminal papers on momentum-based methods~\cite{tseng2008,nemirovski1985optimal,AO-survey-nesterov,attouch2000heavy,attouch2019rate,SuBC16,wibisono2016variational,wilson2016lyapunov,zhang2018direct,shi2018understanding,krichene2015accelerated,Scieur2017,Bubeck2015,drusvyatskiy2016optimal,lin2015universal,betancourt2018symplectic,AXGD,thegaptechnique,attouch2000heavy-esc,lessard2019direct,hu2017control,muehlebach2019dynamical}, with many of these works~\cite{attouch2019rate,SuBC16,wibisono2016variational,wilson2016lyapunov,zhang2018direct,shi2018understanding,krichene2015accelerated,Scieur2017,betancourt2018symplectic,AXGD,thegaptechnique,muehlebach2019dynamical,francca2019conformal,muehlebach2019dynamical} seeking to interpret Nesterov acceleration as a discretization of a continuous-time dynamical system. Further, some of these works led to physical interpretations of Polyak's~\cite{attouch2000heavy,attouch2000heavy-esc,francca2019conformal} and Nesterov's~\cite{wibisono2016variational,betancourt2018symplectic} methods in the Lagrangian and Hamiltonian formalism, and the physical interpretation of momentum has even led to new algorithms (e.g.,~\cite{maddison2018hamiltonian,francca2019conformal,betancourt2018symplectic}). For the setting of nonconvex optimization, however, and, more broadly, convergence to points with small norm of the gradient, the continuous-time perspective has been much less explored~\cite{attouch2000heavy-esc,shi2018understanding,jin2017accelerated,attouch2019first}.

In this paper, we take a Hamiltonian-based perspective to derive a broad class of momentum methods that yield Nesterov's and Polyak's methods as special cases. 
As a specific example, a class of methods obtained from the introduced Hamiltonian and parametrized by $\lambda \in [0, 1]$ interpolates between Nesterov's method for smooth minimization~\cite{Nesterov1983} (when $\lambda = 1$) and a generalization of the heavy ball method~\cite{polyak1964some} (when $\lambda = 0$). 
We show that because the methods are obtained as the equations of motion of this Hamiltonian, we can deduce invariants (conserved quantities of the Hamiltonian) that can be used to argue about convergence in function value (for convex optimization) and convergence to stationary points (for possibly nonconvex optimization). The techniques are general and lead to results in \markupdelete{Banach} \markupadd{general normed vector} spaces. 

\markupadd{We note that non-Euclidean normed spaces are frequently encountered in applications, particularly those arising in sparsity-oriented machine learning, where it is natural to use the $\ell_1$ norm, and in fast algorithms for network flow problems, where the natural norm is $\ell_\infty$~\cite{KLOS2014,Lee:2013,sherman2017area}. Here, by ``natural,'' we mean that the use of an alternative $\ell_p$ norm would incur a polynomial dependence on the dimension in the method's iteration complexity. This is because both the initial distance to the optimum or the diameter of the set \emph{and} the Lipschitz constant of the objective function or its gradient that determine the iteration complexity are defined w.r.t.~the norm that is adopted for the underlying vector space (see, e.g.,~\cite{diakonikolas2020lower} for a similar discussion).}

In Section~\ref{appx:conv-in-fun-val} we provide analysis of the methods in terms of convergence in function value in the setting of (possibly) constrained convex optimization in \markupdelete{Banach} \markupadd{possibly non-Euclidean normed vector} spaces. We show that the entire class of methods parametrized by $\lambda$ (as mentioned above) converges at rate $1/k^2$ as long as $\lambda$ is bounded away from zero. When $\lambda = 0,$ the convergence slows down to $1/k.$ This agrees with previously obtained results for the heavy-ball method in the setting of smooth (non-strongly convex) minimization~\cite{ghadimi2015global} (our case $\lambda = 0$). As a byproduct of this approach, we obtain a generalization of the heavy-ball method to constrained convex optimization in \markupdelete{Banach} \markupadd{non-Euclidean vector} spaces and show that it converges at rate $1/k.$ Such a result was previously known only for unconstrained convex optimization in Euclidean spaces~\cite{ghadimi2015global}.

In terms of the convergence to stationary points, we consider the unconstrained case and focus on finding points with small norm of the gradient, in either \markupdelete{Hilbert or} \markupdelete{Banach} \markupadd{Euclidean or non-Euclidean} spaces. 
We show that when $f$ is convex, any method from the class satisfies $\min_{0\leq i\leq k}\|\nabla f(\vx_k)\|_*^2 = O(\frac{L(f(\vx_0 - \vx^*))}{k}).$ Note that any of these methods, when run for $k/2$ iterations after running Nesterov's method for $k/2$ iterations, satisfies $\min_{k/2\leq i\leq k}\|\nabla f(\vx_k)\|_*^2 = O(\frac{L\|\vx_0 - \vx^*\|^2}{k^3}).$ While this is suboptimal for the case of convex functions---the optimal rate is $\min_{0\leq i\leq k}\|\nabla f(\vx_k)\|_*^2 = O(\frac{L(f(\vx_0 - \vx^*))}{k^2})$~\cite{nesterov2012make,kim2018optimizing,carmon2017lower} and it is achieved by~\cite{kim2018optimizing}---we conjecture that it is tight. In particular,~\cite{kim2018generalizing} demonstrated that the convergence of the form $\min_{k/2\leq i\leq k}\|\nabla f(\vx_k)\|_*^2 = O(\frac{L\|\vx_0 - \vx^*\|^2}{k^3})$ is tight for Nesterov's method. 

For the case of nonconvex functions, we show that methods that are instantiations of the heavy-ball method converge at the optimal~\cite{carmon2017lower} rate of $\min_{0\leq i\leq k}\|\nabla f(\vx_k)\|_*^2 = O(\frac{L(f(\vx_0 - \vx^*))}{k}).$ While a similar result exists for the case of Euclidean spaces~\cite{ghadimi2016accelerated}, we are not aware of any other results for the more general \markupdelete{Banach} \markupadd{normed vector} spaces in which the method does not lose its favorable properties. For example, it is possible to establish similar rates for modifications of Nesterov's method that turn it into a descent method that makes at least as much progress as gradient descent, as in, e.g.,~\cite{nesterov2012make,nesterov2018primal,Ghadimi2019}. In this case, the analysis of convergence in norm of the gradient boils down to the analysis of (standard) gradient descent. Unfortunately, because such methods monotonically decrease the function value, they lose the property of utilizing the momentum to escape shallow local minima. Note that, as mentioned earlier, global exploration of local minima is one of the primary reasons for considering momentum-based methods in nonconvex optimization~\cite{attouch2000heavy-esc}.

{Finally, we note that the notions of convergence considered in this paper are only the standard notions for first-order smooth (not necessarily strongly convex) optimization: (i) convergence to (neighborhoods of) points with  small optimality gap, and (ii) convergence to (neighborhoods of) points with  small gradient norm. Our analysis only guarantees that such points can be output by the algorithm and does not deal with the convergence of the trajectories, which is \markupadd{generally} a \markupdelete{much} more challenging problem (see, e.g.,~\cite{cabot2009long}). \markupdelete{In particular, we do not rule out the possibility that the dynamics}  \markupdelete{oscillate around stationary points.}} \markupadd{We note that weak convergence results of the trajectories have been established by prior work for variants of inertial proximal algorithms that generalize the Nesterov accelerated method to the setting of composite objectives (smooth plus nonsmooth)~\cite{chambolle2015convergence,attouch2019convergence}.}
\subsection{Related Work}
In addition to the work already mentioned above, we provide a few more remarks regarding related work. First, for convex minimization (in function value), there are several approaches that apply to constrained minimization and general \markupdelete{Banach} \markupadd{normed vector} spaces~\cite{wibisono2016variational,krichene2015accelerated,AXGD,thegaptechnique,betancourt2018symplectic}, with a subset of them being directly motivated by Lagrangian~\cite{wibisono2016variational} and Hamiltonian~\cite{betancourt2018symplectic} mechanics. The latter make use of \markupadd{special} Lyapunov functions \markupadd{(which are sometimes also called the energy functions)} to characterize convergence rates, and their applicability to convergence to stationary points is unclear. In contrast, our work is not based on separately constructed Lyapunov functions; rather, our analysis of convergence rates stems from the analysis of conserved quantities of the Hamiltonian. \markupadd{These conserved quantities can also be viewed as Lyapunov functions. The main difference compared to the prior work is that we do not need to ``guess'' these functions---they are directly derived from the same Hamiltonian whose equations of motion are the momentum dynamics we consider.}   

A significant body of recent work in nonconvex optimization has focused on convergence to approximate local minima, with many of the methods having (near-)optimal iteration complexities (see, e.g.,~\cite{carmon2017lower,allen2017natasha,jin2017accelerated}). The only work that we are aware of that has used a Hamiltonian perspective on convergence to stationary points in the nonconvex setting is~\cite{jin2017accelerated}. However, the connection to Hamiltonian systems in~\cite{jin2017accelerated} is limited---it essentially relies on showing that~\eqref{eq:HBD} dissipates energy of the form $f(\vx_t) + \frac{1}{2}\|\dot{\vx}_t\|_2^2,$ and is specialized to a Euclidean setting. In fact, the main contribution of~\cite{jin2017accelerated} is in providing a near-optimal method for convergence to approximate local minima and not in providing a Hamiltonian perspective on nonconvex optimization. \markupadd{Also worth mentioning are inertial approaches to nonconvex optimization~\cite{begout2015damped, castera2019inertial}, based on Kurdyka-{\L}ojasiewicz (KL) property of the objective, as introduced in~\cite{bolte2010characterizations}. The KL property ensures that the gradients of the objective do not vanish too quickly around critical points. Such a property is not assumed to hold for the problems considered in this work.}
%
%
\subsection{Preliminaries}

%
\markupadd{While there is nothing in our approach that prevents one from working in general (possibly infinitely-dimensional) Banach spaces, we will limit our attention to finite-dimensional real vector spaces, as they suffice for the motivating applications discussed in the introduction. }
The primal, $n$-dimensional real vector space is denoted by $E.$ The space $E$ is normed, endowed with a norm $\|\cdot\|.$ Its dual space, consisting of all linear functions on $E,$ is denoted by $E^*.$ For $\vz \in E^*$ and $\vx \in E,$ we denote by $\innp{\vz, \vx}$ the value of $\vz$ at $\vx.$ The  dual norm (associated with space $E^*$) is defined in the standard way as $\|\vz\|_* = \max_{\vx \in E}\frac{\innp{\vz, \vx}}{\|\vx\|}.$ For Euclidean spaces, $\innp{\cdot, \cdot}$ is the standard inner product and $\|\cdot\| = \|\cdot\|_* = \|\cdot\|_2.$ 

We assume that $f: \cx \rightarrow \mathbb{R}$ is a (possibly nonconvex) continuously-differentiable function, and $\cx \subseteq E$ is closed and convex. 
$\vx^* \in \argmin_{\vx \in \cx} f(\vx)$ denotes any fixed minimizer of $f$. To avoid vacuous statements, we assume that $f(\vx^*) > - \infty.$ 


For all the methods, $\vx_t \in \cx$ will be the running solution, and $\vz_t \in \cz = \vz_0 + \mathrm{Lin}\{\nabla f(\vx): \vx \in \cx\}$ 
will be \markupadd{the sum of $\vz_0 \in E^*$ and} some linear combination of the gradients $\nabla f(\vx_{\tau})$ for $\tau \in [0, t].$ In the Hamiltonian formalism, $\vz_t$ will correspond to the \markupdelete{momentum} \markupadd{conjugate momenta}, $f(\vx)$ will correspond to the potential energy, and $\psi^*(\vz)$ will correspond to the kinetic energy, where $\psi^*:\cz\rightarrow \mathbb{R}$ is a convex conjugate (defined below) of some strongly convex function $\psi:\cx \rightarrow \mathbb{R}$ (e.g., $\psi^* (\vz) = \frac{1}{2}\|\vz\|_2^2$ if $\|\cdot\| = \|\cdot\|_2$ and $\psi(\vx) = \frac{1}{2}\|\vx\|_2^2$)\footnote{Here we use the notation $\psi^*$ to emphasize that $\vx$ and $\vz$ do not, in general, belong to the same vector space.}. 

We now outline some useful definitions and facts that are used in the paper.

\markupdelete{The equations of motion corresponding to a Hamiltonian $\ch(\vx, \vz, t)$ are given by\\}
\markupdelete{$\dot{\vx}_t = \nabla_{\vz}\ch(\vx, \vz, t)$ and $\dot{\vz}_t = -\nabla_{\vx}\ch(\vx, \vz, t),$ where $\nabla_{\vx}\ch(\vx, \vz, t)$ (respectively, \\}
\markupdelete{$\nabla_{\vz}\ch(\vx, \vz, t)$) denotes the partial gradient of $\ch(\vx, \vz, t)$ w.r.t.~$\vx$ (respectively, $\vz$).\\} \markupdelete{Differentiating $\ch(\vx, \vz, t)$ 
with respect to time $t,$ if $\vx,\, \vz$ evolve according to the \\}
\markupdelete{equations of motion of $\ch(\vx, \vz, t)$, it follows that $\frac{\dd }{\dd t}\ch(\vx, \vz, t) = \frac{\partial}{\partial t}\ch(\vx, \vz, t).$ This will\\} 
\markupdelete{be used to derive the conserved quantities (invariants) of the generalized momentum\\}
\markupdelete{Hamiltonian in the proof of Lemma~\ref{lemma:ct-gen-mom-cq} from Section~\ref{sec:ct-methods}.}

To carry out the analysis of the cases of convex and nonconvex objectives in a unified way, we use the following notion of weak convexity, similar to e.g.,~\cite{allen2017natasha,davis2019proximally}.
\begin{definition}\label{def:eps-weakly-ncvx}
We say that a continuously-differentiable function $f: \cx \rightarrow \rr$ is $\epsilon_H$-weakly convex for some $\epsilon_H \in \rr_+$ if 
$$(\forall \vx, \vy \in \cx):\quad f(\vy) \geq f(\vx) + \innp{\nabla f(\vx), \vy - \vx} - \frac{\epsilon_H}{2}\|\vy - \vx\|^2.$$
\end{definition}
We will mainly be concerned with cases $\epsilon_H = 0$ and $\epsilon_H = L.$ Observe that a $0$-weakly convex function is convex, by the standard first-order definition of convexity for continuously-differentiable functions that can be stated as 
$$(\forall \vx,\, \vy \in \cx):\quad f(\vy) \geq f(\vx) + \innp{\nabla f(\vx), \vy - \vx}.$$ 

\begin{definition}\label{def:smoothness}
A continuously-differentiable function $f: \cx \rightarrow \rr$ is $L$-smooth for $L \in \rr_+,$ if $\forall \vx, \vy \in \cx,$ $\|\nabla f(\vx) - \nabla f(\vy)\|_* \leq L\|\vx - \vy\|.$
\end{definition}
Recall that $L$-smoothness of a function implies that 
$$(\forall \vx, \vy \in \cx):\quad f(\vy) \leq f(\vx) + \innp{\nabla f(\vx), \vy - \vx} + \frac{L}{2}\|\vy - \vx\|^2.$$ 
It is not hard to show that an $L$-smooth function is also $L$-weakly convex. We will be assuming throughout that there exists $L < \infty$ such that $f$ is $L$-smooth. 

\begin{definition}\label{def:strong-cvxity}
A continuously-differentiable function $f: \cx \rightarrow \rr$ is $\mu$-strongly convex for $\mu \in \rr_+,$ if $\forall \vx, \vy \in \cx,$ $f(\vy) \geq f(\vx) + \innp{\nabla f(\vx), \vy - \vx} + \frac{\mu}{2}\|\vy - \vx\|^2.$
\end{definition}

\begin{definition}\label{def:cvx-conj}
The convex conjugate of $\psi : \cx \rightarrow \mathbb{R}$ is defined as $\psi^*(\vz) = \sup_{\vx \in \cx} \{\innp{\vz, \vx} - \psi(\vx)\}$.\markupadd{\footnote{\markupadd{Our definition of a convex conjugate slightly differs from the standard definition, where the convex conjugate is defined w.r.t.~the entire vector space $E,$ i.e., $\psi^*(\vz) = \sup_{\vx \in E} \{\innp{\vz, \vx} - \psi(\vx)\}$. Thus, our definition corresponds to taking the convex conjugate of $\psi + I_{\cx}$ in the standard definition, where $I_{\cx}$ is the indicator function of the set $\cx$ (equal to zero within the set, and to infinity outside it). This choice incurs no loss of generality and simplifies the notation throughout the paper.}}}
\end{definition}

The following standard fact is a corollary of Danskin's Theorem (see, e.g.,~\cite{bertsekas1971control}).
\begin{fact}\label{fact:danskin}
Let $\psi:\cx \rightarrow \mathbb{R}$ be a strongly convex function.\footnote{Alternatively, it suffices that $\psi$ is strictly convex and \markupadd{either} $\cx$ is compact \markupadd{or the gradient of $\psi$ blows up at the boundary of $\cx$.}} Then $\psi^*$ is continuously differentiable and $\nabla \psi^*(\vz) = \argmax_{\vx \in \cx}\{\innp{\vz, \vx} - \psi(\vx)\}$.
\end{fact}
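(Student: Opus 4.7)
The plan is to derive the fact from three standard ingredients: existence and uniqueness of the maximizer under strong convexity, the observation that $\psi^*$ is a pointwise supremum of affine functions in $\vz$ (which yields a one-sided subgradient inequality for free), and a Lipschitz stability bound for the maximizer that promotes differentiability to continuous differentiability.

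First I would fix $\vz \in E^*$ and use $\mu$-strong convexity of $\psi$ to obtain the quadratic lower bound $\psi(\vx) \geq \psi(\vx_0) + \innp{\nabla\psi(\vx_0), \vx - \vx_0} + \tfrac{\mu}{2}\|\vx - \vx_0\|^2$ around some reference point $\vx_0 \in \cx$. Substituting this into $\innp{\vz,\vx} - \psi(\vx)$ makes the latter strongly concave and coercive on the closed convex set $\cx$, so a unique maximizer $\vx^*(\vz)$ exists and $\psi^*(\vz)$ is finite.

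Next, since $\psi^*$ is by definition a supremum of the affine-in-$\vz$ functions $\vz \mapsto \innp{\vz, \vx} - \psi(\vx)$ indexed by $\vx \in \cx$, it is convex in $\vz$. Evaluating the defining supremum at $\vx^*(\vz)$ yields, for every $\vz' \in E^*$, the inequality $\psi^*(\vz') \geq \innp{\vz', \vx^*(\vz)} - \psi(\vx^*(\vz)) = \psi^*(\vz) + \innp{\vz' - \vz, \vx^*(\vz)}$, which shows $\vx^*(\vz) \in \partial \psi^*(\vz)$. Conversely, any subgradient must correspond to a maximizer via the Fenchel--Young equality, and uniqueness of the maximizer forces $\partial \psi^*(\vz) = \{\vx^*(\vz)\}$. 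A convex function whose subdifferential at a point is a singleton is differentiable there with that singleton as its gradient, giving $\nabla \psi^*(\vz) = \vx^*(\vz)$.

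To promote differentiability to continuous differentiability I would establish a Lipschitz stability estimate for $\vx^*(\cdot)$: writing the first-order optimality conditions for $\vx^*(\vz_1)$ and $\vx^*(\vz_2)$, adding the two inequalities, and invoking $\mu$-strong convexity of $\psi$ yields $\|\vx^*(\vz_1) - \vx^*(\vz_2)\| \leq \tfrac{1}{\mu}\|\vz_1 - \vz_2\|_*$, which is actually stronger than what is required. The main obstacle is Step~1 when one weakens to the alternative hypothesis in the footnote: with only strict (not strong) convexity, coercivity may fail, and one must rule out the supremum escaping to the boundary of $\cx$---this is precisely where the compactness of $\cx$, or the assumed gradient blow-up of $\psi$ at $\partial \cx$, is used. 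Once existence and uniqueness are in hand, the remaining Danskin-type argument is routine.
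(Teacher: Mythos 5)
Your proof is correct. Note, however, that the paper does not actually prove this statement at all: it is stated as a known fact and dispatched with a citation ("a corollary of Danskin's Theorem, see, e.g., Bertsekas"), so there is no in-paper argument to match. Your route is a self-contained convex-analysis proof that bypasses Danskin entirely: coercivity from the $\mu$-strong-convexity lower bound gives existence (in the paper's finite-dimensional setting, closedness of $\cx$ plus coercivity suffices) and strict concavity gives uniqueness of the maximizer $\vx^*(\vz)$; the supremum-of-affine-functions observation plus the Fenchel--Young equality identifies $\partial\psi^*(\vz)$ with the set of maximizers, so the subdifferential is a singleton and $\nabla\psi^*(\vz)=\vx^*(\vz)$; and the monotonicity argument from the two first-order optimality conditions yields $\|\vx^*(\vz_1)-\vx^*(\vz_2)\|\le \frac{1}{\mu}\|\vz_1-\vz_2\|_*$, which gives continuity of the gradient and, as you say, more: this last estimate is exactly the content of the paper's Fact~\ref{fact:smoothness-sc-duality} ($\frac{1}{\mu}$-smoothness of $\psi^*$), so your argument proves both facts at once, which is a nice economy the paper's citation-based treatment does not make explicit. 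You also correctly locate where the footnote's alternative hypotheses (compactness of $\cx$, or gradient blow-up at $\partial\cx$) enter, namely only in the existence step; under those weaker hypotheses the Lipschitz estimate is of course lost and one only retains continuity of $\vz\mapsto\vx^*(\vz)$, e.g.\ by a maximum-theorem or maximizing-sequence argument, which is all the Fact claims.
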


Another useful property of convex conjugacy is the duality between smoothness and strong convexity, which can be seen as a strengthening of Fact~\ref{fact:danskin}.

\begin{fact}\label{fact:smoothness-sc-duality}
Let $\psi:\cx \rightarrow \mathbb{R}$ be a $\mu$-strongly convex function. Then $\psi^*$ is $\frac{1}{\mu}$-smooth.
\end{fact}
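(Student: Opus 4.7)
The plan is to exploit Fact~\ref{fact:danskin} to represent $\nabla\psi^*$ via the constrained maximizers of strongly concave objectives, then to derive the Lipschitz bound by combining the first-order variational conditions for these maximizers with the $\mu$-strong convexity of $\psi$. I would start by fixing two arbitrary points $\vz_1, \vz_2$ in the domain of $\psi^*$ and setting $\vx_i = \nabla\psi^*(\vz_i)$ for $i \in \{1, 2\}$. By Fact~\ref{fact:danskin}, each $\vx_i$ is the unique maximizer of $\vx \mapsto \innp{\vz_i, \vx} - \psi(\vx)$ over $\cx$, with uniqueness coming from the $\mu$-strong concavity of this objective.

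The next step is to invoke the first-order optimality conditions for a concave maximization problem over a convex set: for each $\vx \in \cx$ and each $i \in \{1,2\}$,
$$\innp{\vz_i - \nabla\psi(\vx_i),\, \vx - \vx_i} \leq 0.$$
Plugging $\vx = \vx_2$ into the $i=1$ inequality and $\vx = \vx_1$ into the $i=2$ inequality and adding these two relations, the gradient-of-$\psi$ contributions combine to give
$$\innp{\vz_1 - \vz_2,\, \vx_1 - \vx_2} \;\geq\; \innp{\nabla\psi(\vx_1) - \nabla\psi(\vx_2),\, \vx_1 - \vx_2}.$$
I would then apply Definition~\ref{def:strong-cvxity} twice (once with the roles of $\vx_1,\vx_2$ swapped) and sum to obtain the standard monotonicity-type consequence of strong convexity,
$$\innp{\nabla\psi(\vx_1) - \nabla\psi(\vx_2),\, \vx_1 - \vx_2} \;\geq\; \mu\|\vx_1 - \vx_2\|^2.$$

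Chaining the last two inequalities and bounding the left-hand side of the first via the generalized Cauchy--Schwarz inequality $\innp{\vz_1 - \vz_2, \vx_1 - \vx_2} \leq \|\vz_1 - \vz_2\|_* \, \|\vx_1 - \vx_2\|$ yields $\mu\|\vx_1 - \vx_2\|^2 \leq \|\vz_1 - \vz_2\|_*\,\|\vx_1 - \vx_2\|$, from which the desired bound
$$\|\nabla\psi^*(\vz_1) - \nabla\psi^*(\vz_2)\| \;=\; \|\vx_1 - \vx_2\| \;\leq\; \frac{1}{\mu}\|\vz_1 - \vz_2\|_*$$
follows after dividing by $\|\vx_1 - \vx_2\|$, with the degenerate case $\vx_1 = \vx_2$ being trivial. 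Since this is precisely the $(1/\mu)$-smoothness condition from Definition~\ref{def:smoothness} applied to $\psi^*$, the proof concludes.

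The main obstacle is essentially bookkeeping rather than anything conceptual: I would need to be careful to use the variational form of the first-order optimality condition rather than the unconstrained identity $\nabla\psi(\vx_i) = \vz_i$, since $\cx$ is allowed to be a proper closed convex subset of $E$ and the maximizers may lie on its boundary; and I would need to verify cleanly in the non-Euclidean setting that the appropriate dualities are being used, namely pairing $\vz_1 - \vz_2 \in E^*$ against $\vx_1 - \vx_2 \in E$ and measuring each in the correct norm. Beyond these careful but routine checks, no auxiliary lemmas outside of those already stated in the preliminaries should be required.
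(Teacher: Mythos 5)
Your proof is correct. The paper states Fact~\ref{fact:smoothness-sc-duality} without proof, as a standard duality result, and your argument is precisely the standard derivation: you combine the variational first-order optimality conditions for the two constrained maximizers supplied by Fact~\ref{fact:danskin} with the gradient monotonicity $\innp{\nabla\psi(\vx_1)-\nabla\psi(\vx_2),\vx_1-\vx_2}\geq\mu\|\vx_1-\vx_2\|^2$ implied by Definition~\ref{def:strong-cvxity}, then finish with the generalized Cauchy--Schwarz inequality; you also correctly use the variational inequality rather than the unconstrained identity $\nabla\psi(\vx_i)=\vz_i$ (necessary since $\cx$ may be a proper subset of $E$) and pair $\vz_1-\vz_2\in E^*$ against $\vx_1-\vx_2\in E$ with the correct norms, so the bound $\|\nabla\psi^*(\vz_1)-\nabla\psi^*(\vz_2)\|\leq\frac{1}{\mu}\|\vz_1-\vz_2\|_*$ is exactly the claimed $\frac{1}{\mu}$-smoothness in the sense of Definition~\ref{def:smoothness}.
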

For a convex function $\psi:\vx \rightarrow \mathbb{R}$ that is continuously differentiable on $\cx$, Bregman divergence is defined in a usual way as $D_\psi(\vy, \vx) = \psi(\vy) - \psi(\vx) - \innp{\nabla \psi(\vx), \vy - \vx},$ where $\vx, \vy \in \cx$ . Some useful properties of Bregman divergence are stated below.
\begin{fact}\label{fact:bregman-properties}
(Properties of Bregman Divergence.) Let $\psi:\cx \rightarrow \mathbb{R}$ be  convex and continuously differentiable on $\cx.$ Then:
\begin{itemize}
    \item[(i)] For any $\vu, \vv, \vw \in \cx,$ the following three-point identity holds:
    $$D_\psi(\vu, \vv) = D_{\psi}(\vw, \vv) + \innp{\nabla \psi(\vw) - \nabla\psi(\vv), \vu - \vw} + D_{\psi}(\vu, \vw).$$
    \item[(ii)] Let $\psi$ be $\mu$-strongly convex w.r.t.~$\|\cdot\|$. Then, $\forall \vz, \vz',$ $$D_{\psi^*}(\vz, \vz') \geq \frac{\mu}{2}\|\nabla\psi^*(\vz) - \nabla\psi^*(\vz')\|^2.$$ 
\end{itemize}
\end{fact}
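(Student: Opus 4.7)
For part (i), I would verify the identity by direct algebraic expansion from $D_\psi(\vy, \vx) = \psi(\vy) - \psi(\vx) - \innp{\nabla\psi(\vx), \vy - \vx}$. Summing $D_\psi(\vw, \vv) + D_\psi(\vu, \vw) + \innp{\nabla\psi(\vw) - \nabla\psi(\vv), \vu - \vw}$, the $\pm \psi(\vw)$ terms cancel, the two contributions involving $\innp{\nabla\psi(\vw), \cdot}$ cancel, and the $\innp{\nabla\psi(\vv), \cdot}$ contributions collapse to $-\innp{\nabla\psi(\vv), \vu - \vv}$, leaving exactly $\psi(\vu) - \psi(\vv) - \innp{\nabla\psi(\vv), \vu - \vv} = D_\psi(\vu, \vv)$. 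This is a purely mechanical check and presents no real obstacle.

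For part (ii), the plan is to reduce $D_{\psi^*}(\vz, \vz')$ to a Bregman divergence of $\psi$ evaluated at preimages under $\nabla\psi^*$, and then invoke $\mu$-strong convexity of $\psi$. Set $\vx = \nabla\psi^*(\vz)$ and $\vx' = \nabla\psi^*(\vz')$; by Fact~\ref{fact:danskin}, these are the maximizers in the definition of $\psi^*$, so $\psi^*(\vz) = \innp{\vz, \vx} - \psi(\vx)$ and $\psi^*(\vz') = \innp{\vz', \vx'} - \psi(\vx')$. Substituting into $D_{\psi^*}(\vz, \vz') = \psi^*(\vz) - \psi^*(\vz') - \innp{\vx', \vz - \vz'}$, the terms involving $\vz'$ cancel and I am left with $D_{\psi^*}(\vz, \vz') = \psi(\vx') - \psi(\vx) + \innp{\vz, \vx - \vx'}$. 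First-order optimality of $\vx$ for the maximization defining $\psi^*(\vz)$ gives $\innp{\vz - \nabla\psi(\vx), \vx' - \vx} \leq 0$ (i.e., $\vz - \nabla\psi(\vx)$ lies in the normal cone to $\cx$ at $\vx$), which implies $\innp{\vz, \vx - \vx'} \geq \innp{\nabla\psi(\vx), \vx - \vx'}$. Plugging this in yields $D_{\psi^*}(\vz, \vz') \geq D_\psi(\vx', \vx)$, and $\mu$-strong convexity of $\psi$ then gives $D_\psi(\vx', \vx) \geq \frac{\mu}{2}\|\vx' - \vx\|^2 = \frac{\mu}{2}\|\nabla\psi^*(\vz) - \nabla\psi^*(\vz')\|^2$, as required.

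The main subtlety --- and the only place where care is needed --- is the constrained case $\cx \subsetneq E$, where the familiar unconstrained optimality $\vz = \nabla\psi(\vx)$ no longer holds exactly and one must substitute the variational-inequality form involving the normal cone. Fortunately the correction always points in the direction that strengthens the inequality (both $\vx$ and $\vx'$ lie in $\cx$, so the variational inequality has the favorable sign), so it is not an actual obstacle; together with the convention for convex conjugacy fixed in the footnote following Definition~\ref{def:cvx-conj}, this handles the constrained setting seamlessly.
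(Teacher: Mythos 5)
Your proof is correct. The paper states this as a standard fact without proof, and your argument is exactly the standard one it implicitly relies on: part (i) is the mechanical expansion you describe, and for part (ii) the chain $D_{\psi^*}(\vz, \vz') = \psi(\vx') - \psi(\vx) + \innp{\vz, \vx - \vx'} \geq D_\psi(\vx', \vx) \geq \frac{\mu}{2}\|\vx - \vx'\|^2$ is valid, with the variational inequality $\innp{\vz - \nabla\psi(\vx), \vx' - \vx} \leq 0$ correctly supplying the inequality in place of the unconstrained identity $\vz = \nabla\psi(\vx)$. You rightly flag the only subtlety, namely the paper's set-restricted conjugate $\psi^*(\vz) = \sup_{\vx\in\cx}\{\innp{\vz,\vx}-\psi(\vx)\}$, and your observation that the normal-cone correction has the favorable sign (so the conclusion survives, with $D_{\psi^*}(\vz,\vz') \geq D_\psi(\vx',\vx)$ holding as an inequality rather than the unconstrained equality) is precisely what makes the fact true in the constrained setting used throughout the paper.
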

%
%
%
%
%
\section{Continuous-Time Methods and their  \markupadd{Convergence Analysis}}\label{sec:ct-methods}

We start this section by \markupadd{providing a brief overview of the well-studied inertial approach that has been widely used in continuous optimization to study the behavior of momentum-based methods. We then provide basic definitions and illustrate the main ideas that underlie our analysis. In doing so, we compare our approach to the inertial one and highlight the connections between them.} \markupdelete{describing a simple dynamical system} \markupdelete{obtained from a time-invariant Hamiltonian. Such a  dynamics is known to be non-\\}
\markupdelete{convergent---because the total energy of the system is conserved,  there is a continual} 
\markupdelete{exchange between potential and kinetic energy. 
We thus replace this non-convergent\\} 
\markupdelete{Hamiltonian dynamics with a dynamical system obtained from a time-varying\\} 
\markupdelete{Hamiltonian that dissipates energy. We then show that conserved quantities of this\\} 
\markupdelete{new Hamiltonian can be used to argue about convergence to neighborhoods of\\} 
\markupdelete{stationary points. In later sections, we use the discrete versions of these conserved\\} 
\markupdelete{quantities to argue about convergence in function value (for convex optimization)\\}  
\markupdelete{and convergence to stationary points (for potentially nonconvex optimization).}

\subsection{\markupadd{Inertial Approach in Optimization}}

\markupadd{One of the earliest examples of the inertial approach is the heavy-ball method introduced by Polyak~\cite{polyak1964some} and defined in \eqref{eq:HBD}, \eqref{eq:HB}. The method~\eqref{eq:HBD} applies to \emph{unconstrained Euclidean settings}, and its generalizations to Hilbert spaces and constrained setups has been studied in~\cite{attouch2000heavy-esc,attouch2000heavy,attouch2002dynamics,attouch2012second,laraki2015inertial}.}

\markupadd{The basic variant of the second-order differential equation considered as part of the (damped) inertial approach can be stated as:}
\begin{equation}\label{eq:inertial-ode}
   \markupadd{ \ddot{\vx}_t + \alpha_1(t) \dot{\vx}_t + \alpha_2(t) \nabla f(\vx_t) = 0,}
\end{equation}
\markupadd{where $\ddot{\vx}_t$ is the inertial term, $\alpha_1(t) \dot{\vx}_t$ is the friction term that dissipates energy, and $\alpha_2(t) \nabla f(\vx_t)$ is the term that corresponds to potential forces that drive the motion.} 
\markupadd{When $\alpha_1(t) = \alpha_1$ and $\beta_1(t) = \beta_1$ are constants, Eq.~\eqref{eq:inertial-ode} reduces to Polyak's heavy-ball method as stated in~\eqref{eq:HBD}. When $\alpha_1(t) = \frac{\alpha}{t}$ for $\alpha >0$ and $\alpha_2(t)=1,$ Eq.~\eqref{eq:inertial-ode} reduces to the differential equation introduced by Su et al.~\cite{SuBC16} for studying Nesterov acceleration and its generalizations. In particular, \cite{SuBC16} showed that Nesterov acceleration corresponds to the case $\alpha = 3,$ while choices of $\alpha \geq 3$ can generally lead to accelerated $1/t^2$ rates for smooth minimization. The subcritical case $\alpha <3$ and the degradation of the rates from $1/t^2$ to $1/t^{\frac{2}{3}\alpha}$ was characterized by Attouch et al.~\cite{attouch2019rate}.}

\markupadd{While the inertial approach is powerful and can lead to different qualitative results (e.g., it can be used to study convergence of the trajectories that is not considered in this work), its main limitation is that technical obstacles are encountered once generalizations to non-Euclidean and constrained setups are considered. }

\markupadd{In its basic form, the dynamics from Eq.~\eqref{eq:inertial-ode} cannot be directly applied to  non-Euclidean setups, as the points $\vx_t$ and the gradients $\nabla f(\vx_t)$ do not in general belong to the same vector space. This issue was resolved in the work by Wibisono et al.~\cite{wibisono2016variational} using the concept of Bregman Lagrangian and the dynamics that are generated from it. However, it is not immediately clear how to generalize the results from \cite{wibisono2016variational} to constrained setups.}

\markupadd{A specific challenge that is encountered when imposing constraints directly in the second order ODE from Eq.~\eqref{eq:inertial-ode} is that the direct enforcement of the constraints typically involves the use of maps that are not differentiable and which turn differential equation problems into differential inclusion problems (see, e.g., \cite{attouch2002dynamics,attouch2012second}). More importantly, the introduction of the constraints within this inertial approach is well-known to cause non-elastic shocks, due to the discontinuities in the velocity $\dot{\vx}$ encountered at the boundary of the feasible region~\cite{attouch2002dynamics,attouch2012second}. This problem is overcome in our work (and similarly in prior work~\cite{krichene2015accelerated,thegaptechnique}) through the use of constraint regularization.} 

\markupadd{Finally, the convergence analysis in the inertial approach is carried out using Lyapunov functions that have the interpretation of total energy of the system. They are typically constructed as the sum of the (possibly) scaled optimality gap $f(\vx) - f(\vx^*)$ and another function that has the interpretation of kinetic energy. While the energy interpretation is intuitive for these Lyapunov functions, it is not always \emph{a priori} clear how to construct them. In our approach, it is the Hamiltonian itself that has the interpretation of the total energy, and both our Lyapunov functions and the continuous-time dynamics are obtained from this same Hamiltonian.}

\subsection{\markupadd{Background on Hamiltonian Mechanics}}
\markupadd{In Hamiltonian mechanics, the motion of a particle, or, more broadly, a physical system, is described by canonical coordinates $(\vx, \vz)$\footnote{\markupadd{In physics, it is standard to use the notation $(\vp, \vq)$ instead to denote the generalized coordinates and the conjugate momenta. We use $(\vx, \vz)$ for consistency with the optimization literature.}}, where $\vx$ corresponds to the generalized coordinates (position of a particle) and $\vz$ are their conjugate momenta. Given a Hamiltonian $\ch(\vx, \vz, t),$  which is typically interpreted as the total energy of the system, the time evolution of the physical system is described by Hamilton's equations:}
\begin{equation}\label{eq:hamilton-eq}
   \markupadd{ \frac{\dd \vx}{\dd t} = \nabla_{\vz} \ch(\vx, \vz, t), \quad \frac{\dd \vz}{\dd t} = - \nabla_{\vx} \ch(\vx, \vz, t). }
\end{equation}
\markupadd{An immediate implication of Eq.~\eqref{eq:hamilton-eq} that will be used in deriving the conserved quantities used in the convergence analysis is the following key relationship:}
\begin{equation}\label{eq:hamitonian-time-derivative}
    \begin{aligned}
        \markupadd{ \frac{\dd \ch(\vx, \vz, t)}{\dd t}} &\markupadd{= \nabla_{\vx} \ch(\vx, \vz, t)\cdot \frac{\dd \vx}{\dd t} + \nabla_{\vz} \ch(\vx, \vz, t) \cdot \frac{\dd \vz}{\dd t} + \frac{\partial \ch(\vx, \vz, t)}{\partial t}}\\
         &\markupadd{= \frac{\partial \ch(\vx, \vz, t)}{\partial t}.}
    \end{aligned}
\end{equation}

Perhaps the simplest Hamiltonian that one can formulate is the following separable function:
$
\ch({\vx}, \vz) = f(\vx) + \psi^*(\vz).
$ 
Here, $f(\vx)$ can be viewed as the potential energy of a particle at position $\vx,$ while $\psi^*(\vz)$ is its kinetic energy. 
The corresponding continuous-time dynamics is:
\begin{equation}\label{eq:ct-simple-dyn}\tag{HD}
\begin{gathered}
    \dot{\vx}_t = \nabla_{\vz_t}\ch({\vx}_t, \vz_t) = \nabla \psi^*(\vz_t),\\ 
    \dot{\vz}_t = - \nabla_{\vx_t}\ch({\vx}_t, \vz_t) = -\nabla f(\vx_t).
\end{gathered}
\end{equation}
This dynamics is meaningful only in the unconstrained regime, since otherwise we cannot guarantee that $\vx_t\in \cx.$ Hence, we assume here that $\cx = \mathbb{R}^n.$ 
As $\ch({\vx}, \vz)$ does not explicitly depend on time, we have $\frac{\dd}{\dd t}\ch({\vx}, \vz) = 0.$ Equivalently, $\ch({\vx}, \vz)$ is conserved with time. An immediate implication is that the norm of the averaged gradient decays as $1/t,$ as stated in the following proposition.
\begin{proposition}\label{lemma:simple-ham-dyn-conv}
Let $\vx_t, \vz_t$ evolve according to~\eqref{eq:ct-simple-dyn}, for $\vz_{0} = \zeros$ and arbitrary (but fixed) $\vx_{0}\in \mathbb{R}^n,$ and let $\psi^*(\zeros) = 0.$ If $\psi^*$ is $\mu$-strongly convex,  then, $\forall t \geq 0:$
$$
\left\|\frac{1}{t}\int_0^t \nabla f(\vx_{\tau})\dd \tau \right\|_* \leq \frac{\sqrt{2(f(\vx_{0})-f(\vx^*))/\mu}}{t}.
$$
\end{proposition}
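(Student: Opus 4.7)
The plan is to exploit the fact that the Hamiltonian $\ch(\vx,\vz) = f(\vx) + \psi^*(\vz)$ has no explicit time dependence, so by the identity in Eq.~\eqref{eq:hamitonian-time-derivative} we get $\frac{\dd}{\dd t}\ch(\vx_t,\vz_t) = 0$. This immediately yields the conservation law
\begin{equation*}
    f(\vx_t) + \psi^*(\vz_t) = f(\vx_0) + \psi^*(\vz_0) = f(\vx_0),
\end{equation*}
using $\vz_0 = \zeros$ and $\psi^*(\zeros) = 0$. Rearranging and using $f(\vx_t) \geq f(\vx^*)$ gives $\psi^*(\vz_t) \leq f(\vx_0) - f(\vx^*)$, which bounds the kinetic energy along the trajectory by the initial optimality gap.

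Next I would convert this bound on $\psi^*(\vz_t)$ into a bound on $\|\vz_t\|_*$. From Definition~\ref{def:cvx-conj}, for any $\vz$ we have $\psi^*(\vz) \geq -\psi(\vx)$ for every $\vx \in \cx$, so $\psi^*(\vz) \geq -\min_{\vx \in \cx}\psi(\vx) = \psi^*(\zeros) = 0$, whence $\zeros$ is a minimizer of $\psi^*$. By Fact~\ref{fact:danskin} $\psi^*$ is differentiable, so $\nabla\psi^*(\zeros) = \zeros$, and the $\mu$-strong convexity of $\psi^*$ (with respect to $\|\cdot\|_*$) gives
\begin{equation*}
    \psi^*(\vz_t) \geq \psi^*(\zeros) + \innp{\nabla\psi^*(\zeros), \vz_t} + \frac{\mu}{2}\|\vz_t\|_*^2 = \frac{\mu}{2}\|\vz_t\|_*^2.
\end{equation*}
Combining with the conservation bound from the previous step yields $\|\vz_t\|_* \leq \sqrt{2(f(\vx_0)-f(\vx^*))/\mu}$.

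Finally, I would integrate the second equation of motion in~\eqref{eq:ct-simple-dyn}: since $\dot{\vz}_\tau = -\nabla f(\vx_\tau)$ and $\vz_0 = \zeros$, the fundamental theorem of calculus gives $\vz_t = -\int_0^t \nabla f(\vx_\tau)\,\dd\tau$. Substituting this identity into the bound on $\|\vz_t\|_*$ and dividing by $t$ delivers the claim.

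I do not anticipate a real obstacle: the argument is essentially a direct exploitation of energy conservation plus a one-line integration. The one subtlety worth spelling out is the justification that $\zeros$ is the minimizer of $\psi^*$ (needed to eliminate the linear term in the strong convexity inequality), which follows cleanly from the assumption $\psi^*(\zeros) = 0$ combined with the definition of the convex conjugate as outlined above.
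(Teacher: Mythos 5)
Your proposal follows the same route as the paper's proof, step for step: conservation of $\ch(\vx_t,\vz_t)$ gives $\psi^*(\vz_t)\leq f(\vx_0)-f(\vx^*)$, strong convexity converts this into a bound on $\|\vz_t\|_*$, and integrating $\dot{\vz}_\tau=-\nabla f(\vx_\tau)$ with $\vz_0=\zeros$ identifies $\vz_t$ with $-\int_0^t\nabla f(\vx_\tau)\dd\tau$. Up to the point you flag as a ``subtlety,'' there is nothing to add.

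That flagged step, however, is exactly where your argument breaks. From Definition~\ref{def:cvx-conj} one only gets $\psi^*(\vz)\geq\innp{\vz,\vx}-\psi(\vx)$ for each $\vx\in\cx$; the claim ``$\psi^*(\vz)\geq-\psi(\vx)$ for every $\vx$'' drops the linear term and is false in general, so $\psi^*(\zeros)=0$ does \emph{not} imply that $\zeros$ minimizes $\psi^*$ (it only says $\inf_{\vx\in\cx}\psi(\vx)=0$). Concretely, take $\cx=\rr^n$ and $\psi(\vx)=\frac{1}{2}\|\vx-\vx_c\|_2^2$ with $\vx_c\neq\zeros$: then $\psi^*(\vz)=\innp{\vz,\vx_c}+\frac{1}{2}\|\vz\|_2^2$ is $1$-strongly convex and satisfies $\psi^*(\zeros)=0$, yet its minimizer is $-\vx_c$ and $\psi^*(\vz)\geq\frac{1}{2}\|\vz\|_2^2$ fails whenever $\innp{\vz,\vx_c}<0$. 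So the inequality $\psi^*(\vz_t)\geq\frac{\mu}{2}\|\vz_t\|_*^2$ genuinely requires the extra normalization $\nabla\psi^*(\zeros)=\zeros$ (equivalently, that $\psi$ attains its minimum at the origin), which holds for the canonical kinetic energies the paper has in mind, e.g.\ $\psi^*(\vz)=\frac{1}{2\mu}\|\vz\|^2$. To be fair, the paper's own proof makes the same leap silently when it writes ``by the $\mu$-strong convexity of $\psi^*$,'' so your proof is no weaker than the published one; but your attempted derivation of the missing fact from $\psi^*(\zeros)=0$ alone is invalid, and the correct fix is to state the normalization as an assumption (or verify it for the specific $\psi^*$ used) rather than to deduce it from the conjugate definition.
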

\begin{proof}
Since the Hamiltonian is conserved, $\psi^*(\vz_t) = f(\vx_{0})-f(\vx_t) + \psi^*(\vz_{0}) = f(\vx_{0})-f(\vx_t).$ \markupadd{Recall that $\vx^*$ minimizes $f.$} As $f(\vx_t) \geq f(\vx^*),$ it follows that $\psi^*(\vz_t) \leq f(\vx_{0})-f(\vx^{*}).$ By the $\mu$-strong convexity of $\psi^*,$ $\frac{\mu}{2}\|\vz_t\|_*\leq f(\vx_{0})-f(\vx^{*}).$ Finally, integrating the second equation from~\eqref{eq:ct-simple-dyn}, we have that $\vz_t = \vz_{0}-\int_0^t \nabla f(\vx_{\tau})\dd\tau = -\int_0^t \nabla f(\vx_{\tau})\dd\tau$, which, combined with the last inequality (after dividing both sides by $t^2\mu/2$ and taking the square root of both sides), gives the claimed bound.
\end{proof}

While Proposition~\ref{lemma:simple-ham-dyn-conv} shows that the \emph{average} of the gradients converges in norm $\|\cdot\|_*$ to zero at a sublinear rate, it does not guarantee that the dynamics converges to or even visits any stationary point of $f$. Indeed, since the energy (equal to $\ch({\vx}_t, \vz_t)$) is conserved with time, the dynamics is well-known to be non-convergent. Hence, the fact that the average gradient converges in the dual norm only implies that the path of the dynamics consists of cycle-like segments over which the gradients cancel out. 

%
%
\subsection{Generalized Momentum Dynamics}
%
%
%
 %
 %
The standard Hamiltonian dynamics from the previous subsection is overly aggressive as a function of the history of the gradients (i.e., the momentum $\vz_t$). As a consequence of energy conservation, the energy is exchanged between the potential and kinetic energy, which makes the dynamics exhibit non-convergent behavior. For the dynamics to be attracted to stationary points, it needs to be dampened.  The most common approach is to introduce friction into the equations of motion, which leads to second order ODEs such as those described by Eq.~\eqref{eq:inertial-ode}. 
Here, we take an alternative approach that directly modifies the Hamiltonian. As we will see, this approach allows us to consider  constrained optimization problems over general \markupdelete{Banach}\markupadd{normed} spaces, unlike \markupadd{most of} the friction-based approaches.

For example, the Nesterov acceleration method for smooth constrained minimization in general normed spaces can be expressed in continuous time as follows~\cite{krichene2015accelerated,thegaptechnique}:
\begin{equation}\label{eq:ct-amd}\tag{AD}
    \begin{gathered}
    \dot{\vx}_t = \frac{\dot{\alpha}_t(\nabla \psi^*(\vz_t)-\vx_t)}{\alpha_t},\\
    \dot{\vz}_t = - \dot{\alpha}_t \nabla f(\vx_t).
    \end{gathered}
\end{equation} 
\markupadd{Using Eq.~\eqref{eq:hamilton-eq},} this dynamics can be shown to correspond to the following  Hamiltonian:~\footnote{This Hamiltonian was obtained in the discussions between J.~Diakonikolas and Lorenzo Orecchia.}
\begin{equation}\label{eq:amd-ham}
    \ch(\overline{\vx}, \vz, \tau) = \tau f({\overline{\vx}}/{\tau}) + \psi^*(\vz),\footnote{\markupadd{To avoid division by zero, the dynamics can be started from time $\tau = 1.$ Alternatively, one can replace $\tau$ by $\tau+1$ and start the dynamics from $\tau = 0$. In the sequel, we will always assume that $\alpha_0$ is bounded away from zero.}}
\end{equation}
after a suitable time reparametrization $\tau = \alpha_t$ (see also, e.g.,~\cite{wibisono2016variational}), 
where $\alpha_t$ is a strictly increasing function of time $t$ and $\overline{\vx} = \tau \vx$. To see that the dynamics from~\eqref{eq:ct-amd} corresponds to the equations of motion of the Hamiltonian~\eqref{eq:amd-ham}, observe that 
$$
\frac{\dd }{\dd t}\overline{\vx}_t = \frac{\dd \tau}{\dd t} \frac{\dd }{\dd \tau}\overline{\vx}_t = \dot{\alpha}_t \nabla_{\vz}\ch(\overline{\vx}_t, \vz_t, \tau) = \dot{\alpha}_t \nabla\psi^*(\vz_t),
$$
which, using $\overline{\vx}_t = \alpha_t \vx_t$, is exactly the first equation from~\eqref{eq:ct-amd}. Similarly, the second equation of motion for the Hamiltonian from Eq.~\eqref{eq:amd-ham} $\dot{\vz}_t = - \dot{\alpha_t}\nabla_{\vx}\ch(\overline{\vx}_t, \vz_t, \alpha_t) = - \dot{\alpha_t}\nabla f(\overline{\vx}_t/\alpha_t)$ is exactly the second equation from~\eqref{eq:ct-amd}, as $\overline{\vx}_t = \alpha_t \vx_t$. 

We now show that it is possible to generalize the Hamiltonian from Eq.~\eqref{eq:amd-ham} and its resulting equations of motion to capture a much broader class of convergent momentum-based methods that contains a generalization of Polyak's heavy ball method~\cite{polyak1964some}. In particular, consider:
\begin{equation}
\label{eq:momentum-ham}
    \chm(\overline{\vx}, \vz, \tau) = h(\tau)f({\overline{\vx}}/{\tau}) + \psi^*(\vz),
\end{equation}
where, as before, $\overline{\vx} = \tau\vx$, $h(\tau)$ is a positive function of $\tau,$ and we reparametrize time as $\tau = \alpha_t.$ We will mainly be considering the case $h(\tau) = {\tau}^{\lambda}$ for $\lambda \in [0, 2]$ ({see Sections~\ref{appx:conv-in-fun-val} } and \ref{sec:dt-methods}). 
The resulting equations of motion (after time reparametrization) of this Hamiltonian are:
\begin{equation}\label{eq:ct-mom-dyn}\tag{MoD}
    \begin{gathered}
        \dot{\vx}_t = \frac{\dot{\alpha}_t(\nabla\psi^*(\vz_t)-\vx_t)}{\alpha_t},\\
        \dot{\vz}_t = - h(\alpha_t)\frac{\dot{\alpha}_t}{\alpha_t}\nabla f(\vx_t).
    \end{gathered}
\end{equation}

\markupadd{Observe that the assumption that the vector space $E$ is non-Euclidean, i.e., that $\vx_t$ and $\vz_t$ belong to different (mutually dual) spaces, is seamlessly handled through the use of the map $\nabla \psi^*: E^* \to \cx \subseteq E.$  Due to Fact~\ref{fact:smoothness-sc-duality}, $\nabla \psi^*$ is a Lipschitz-continuous map whenever $\psi$ is strongly convex, which is a basic assumption we make throughout. Further, it is not hard to see that if $\vx_0 \in \cx,$ then $\vx_t \in \cx,$ $\forall t> 0.$ To see this, observe that, after suitably rearranging the terms, we can equivalently write the first equation from~\eqref{eq:ct-mom-dyn} as}
\begin{equation*}
    \markupadd{\frac{\dd (\alpha_t \vx_t)}{\dd t} = \dot{\alpha}_t \nabla \psi^*(\vz_t).}
\end{equation*}
\markupadd{Integrating both sides of the last equation, it follows that $$\vx_t = \frac{\alpha_0}{\alpha_t}\vx_0 + \frac{1}{\alpha_t}\int_0^t \nabla \psi^*(\vz_{\tau})\dot{\alpha}_\tau \dd \tau.$$ 
Thus, as $\vx_0 \in \cx$ and $\nabla \psi^*(\vz) \in \cx$, $\forall \vz \in E^*$ (see Fact~\ref{fact:danskin}), it follows that $\vx_t \in \cx$. Additionally, if the dynamics is started from the relative interior of the set $\cx,$ then $\vx_t$ always remains in the relative interior. As $\nabla \psi^*$ is a Lipschitz continuous map, unlike in the inertial approach, no non-elastic shocks can arise at the boundary. We note that the use of strongly convex functions $\psi$ resulting in smooth functions $\psi^*$ can also be viewed as constraint regularization. }

Clearly, the Hamiltonian $\chm$ and its equations of motion~\eqref{eq:ct-mom-dyn} generalize the accelerated dynamics: \eqref{eq:amd-ham} and~\eqref{eq:ct-amd} correspond to the case $h(\tau) = \tau.$ It is possible to show that the class of methods captured by the equations of motion of~\eqref{eq:momentum-ham} also contains a generalization of Polyak's heavy ball method, as shown in the following proposition. 
\begin{proposition}
Polyak's heavy ball method is equivalent to~\eqref{eq:ct-mom-dyn} when $\cx = \rr^n,$ $\|\cdot\| = \|\cdot\|_2,$ $\psi^*(\vz) = \frac{1}{2\mu}\|\vz\|_2^2,$ $h(\tau) = {\tau}^0 = 1,$ and $\frac{\dot{\alpha}_t}{\alpha_t} = \eta >0.$
\end{proposition}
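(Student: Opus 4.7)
The plan is to substitute the stated specializations into the two equations of~\eqref{eq:ct-mom-dyn}, reduce the system to a single second-order ODE in $\vx_t$, and verify that the result is of the form~\eqref{eq:HBD} for an explicit choice of the constants $\alpha_1, \alpha_2$.

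First, under the given choice of $\psi^*(\vz) = \frac{1}{2\mu}\|\vz\|_2^2$ (on $\cx = \rr^n$ with the Euclidean norm), Fact~\ref{fact:danskin} gives $\nabla \psi^*(\vz) = \vz/\mu$. Combined with $h(\alpha_t) = 1$ and $\dot{\alpha}_t/\alpha_t = \eta$, the system~\eqref{eq:ct-mom-dyn} collapses to
\begin{equation*}
\dot{\vx}_t \;=\; \eta\bigl(\vz_t/\mu - \vx_t\bigr), \qquad \dot{\vz}_t \;=\; -\eta\,\nabla f(\vx_t).
\end{equation*}

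Next, I would differentiate the first equation in time and substitute the second to eliminate $\dot{\vz}_t$. This yields
\begin{equation*}
\ddot{\vx}_t \;=\; \eta\bigl(\dot{\vz}_t/\mu - \dot{\vx}_t\bigr) \;=\; -\eta\,\dot{\vx}_t \;-\; \frac{\eta^2}{\mu}\,\nabla f(\vx_t),
\end{equation*}
which is exactly~\eqref{eq:HBD} with $\alpha_1 = -\eta$ and $\alpha_2 = -\eta^2/\mu$. Conversely, given any $\alpha_1 < 0, \alpha_2 < 0$ appearing in Polyak's heavy ball ODE, one can solve $\eta = -\alpha_1$ and $\mu = \alpha_1^2/(-\alpha_2) = \eta^2/(-\alpha_2)$, recovering the specialization of~\eqref{eq:ct-mom-dyn}. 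I would close by briefly noting that the auxiliary variable $\vz_t$ plays the role of the (conjugate) momentum, and that the first equation of the reduced system, $\vz_t = \mu(\vx_t + \dot{\vx}_t/\eta)$, provides the explicit change of variables that converts the second-order ODE~\eqref{eq:HBD} into the first-order Hamiltonian system~\eqref{eq:ct-mom-dyn}.

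This proof is essentially a direct calculation, so there is no real conceptual obstacle. The only minor care is in making sure the reparametrization $\tau = \alpha_t$ is consistent: the assumption $\dot{\alpha}_t/\alpha_t = \eta$ corresponds to $\alpha_t = \alpha_0 e^{\eta t}$, which is strictly increasing (as required in the definition of the dynamics) and ensures that the identities used above hold for all $t \geq 0$.
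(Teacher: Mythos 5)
Your proof is correct and follows essentially the same route as the paper: specialize~\eqref{eq:ct-mom-dyn} to $\dot{\vx}_t = \eta(\vz_t/\mu - \vx_t)$, $\dot{\vz}_t = -\eta\nabla f(\vx_t)$, differentiate and eliminate $\vz_t$ to get $\ddot{\vx}_t = -\eta\dot{\vx}_t - \frac{\eta^2}{\mu}\nabla f(\vx_t)$, and match constants with~\eqref{eq:HBD}. Your extra remarks (explicit $\alpha_1 = -\eta$, $\alpha_2 = -\eta^2/\mu$, the converse recovery of $\eta,\mu$, and the momentum change of variables) only add detail beyond the paper's ``for suitable choices of $\eta,\mu$'' phrasing.
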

\begin{proof}
Under the assumptions of the proposition, $\dot{\vx}_t = \eta(\frac{1}{\mu}\vz_t - \vx_t)$ and $\dot{\vz}_t = - \eta \nabla f(\vx_t).$ Hence, $\ddot{\vx}_t = - \eta \dot{\vx}_t - \frac{\eta^2}{\mu}\nabla f(\vx_t).$ For suitable choices of $\eta, \, \mu,$ this is equivalent to~\eqref{eq:HBD} from~\cite{polyak1964some}.
 \end{proof}
 
\markupadd{More generally, we can relate \eqref{eq:ct-mom-dyn} to the inertial dynamics from Eq.~\eqref{eq:inertial-ode} in the special case of \emph{unconstrained Euclidean setups} as follows. Let $\cx \equiv E$, $\|\cdot \| = \|\cdot \|_2$ and $\psi^*(\vz) = \frac{1}{2\mu}\|\vz\|^2$ for some $\mu > 0.$ Then $\nabla\psi^*(\vz) = \frac{1}{\mu}\vz,$ and we have:}
\begin{equation*}
    \markupadd{\frac{\dd (\alpha_t \vx)}{\dd t} = \dot{\alpha}_t \vx_t + \alpha_t \dot{\vx}_t = \dot{\alpha}_t  \vz_t/\mu.}
\end{equation*}
\markupadd{Dividing both sides by $\dot{\alpha}_t > 0$ and differentiating w.r.t.~$t$, we have, using the second equation in \eqref{eq:ct-mom-dyn}:}
\begin{equation*}
    \markupadd{\Big(1+ \frac{\dot{\alpha_t}^2 - \alpha_t\ddot{\alpha}_t}{{\dot{\alpha_t}}^2}\Big)\dot{\vx}_t + \frac{\alpha_t}{\dot{\alpha}_t}\ddot{\vx}_t = - \frac{1}{\mu}h(\alpha_t) \frac{\dot{\alpha}_t}{\alpha_t}\nabla f(\vx_t).}
\end{equation*}
\markupadd{Rearranging the last inequality:}
\begin{equation}\label{eq:inert-ode-spec-case}
   \markupadd{\ddot{\vx}_t + \frac{\dot{\alpha}_t}{\alpha_t}\Big(1+ \frac{\dot{\alpha_t}^2 - \alpha_t\ddot{\alpha}_t}{{\dot{\alpha_t}}^2}\Big)\dot{\vx}_t + \frac{1}{\mu}h(\alpha_t) \Big(\frac{\dot{\alpha}_t}{\alpha_t}\Big)^2\nabla f(\vx_t) = 0,}
\end{equation}
 \markupadd{which is precisely the inertial ODE from Eq.~\eqref{eq:inertial-ode} with $\alpha_1(t) = \frac{\dot{\alpha}_t}{\alpha_t}\big(1+ \frac{\dot{\alpha_t} - \ddot{\alpha}_t}{{\dot{\alpha_t}}^2}\big)$ and $\alpha_2(t) = \frac{1}{\mu}h(\alpha_t) \big(\frac{\dot{\alpha}_t}{\alpha_t}\big)^2.$ In particular, for $\alpha_t = t^p,$ $p > 0,$ $\mu = p$, and $h(\alpha_t) = {\alpha_t}^{2/p}/p,$ Eq.~\eqref{eq:inert-ode-spec-case} reduces to}
 \begin{equation*}
     \markupadd{\ddot{\vx}_t + \frac{p+1}{t}\dot{\vx}_t + \nabla f(\vx_t) = 0,}
 \end{equation*}
 \markupadd{which is the damped inertial ODE studied by Su et al.~\cite{SuBC16}.}
 
 The main usefulness of Hamiltonian~$\chm$ is that it can be used to argue about convergence in both the function value (for convex optimization problems) and convergence to stationary points (for potentially nonconvex problems). In the following lemma we exhibit two different conserved quantities (or invariants) of~\eqref{eq:momentum-ham} that can be used towards this goal.
 \begin{restatable}{lemma}{ctcq}\label{lemma:ct-gen-mom-cq}
 Let $\vx_t,\, \vz_t$ evolve according to~\eqref{eq:ct-mom-dyn} for an arbitrary initial point $\vx_0 = \nabla \psi^*(\vz_0) \in \cx$ and some differentiable $\psi^*(\cdot).$ Denote $\beta_t = h(\alpha_t)\alpha_t$. Then, $\forall t \geq 0,$ $\frac{\dd}{\dd t}\cc_t^f = 0$ and $\frac{\dd}{\dd t}\cc_t = 0$, where:
 \begin{align}
    \cc_t^f \defeq & h(\alpha_t) f(\vx_t) - \int_0^t f(\vx_{\tau})\frac{\dd (h(\alpha_{\tau}))}{\dd \tau}\dd \tau  + \int_0^t h(\alpha_t)\frac{\dot{\alpha}_{\tau}}{\alpha_{\tau}}\innp{\nabla f(\vx_{\tau}), \vx_{\tau}}\dd\tau + \psi^*(\vz_t), \label{eq:ct-fun-cq}\\
\cc_t \defeq &\, \beta_t f(\vx_t)-{\beta_0}f(\vx_{0}) - \int_0^t \dot{\beta}_\tau{\dd\tau} f(\vx_{\tau})\dd \tau \label{eq:ct-cq} + \alpha_0 D_{\psi^*}(\vz_t, \vz_{0})+ \int_0^t D_{\psi^*}(\vz_t, \vz_{\sigma})\dot{\alpha}_\sigma\dd\sigma.
\end{align}
 \end{restatable}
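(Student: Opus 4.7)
The plan is to establish both invariants by directly differentiating each quantity with respect to $t$ along trajectories of~\eqref{eq:ct-mom-dyn} and verifying that all contributions cancel. For $\cc_t^f$, this is essentially an instance of the Hamiltonian identity~\eqref{eq:hamitonian-time-derivative} applied to $\chm(\overline{\vx}_t, \vz_t, \alpha_t) = h(\alpha_t)f(\vx_t) + \psi^*(\vz_t)$; for $\cc_t$, an additional ingredient---the integrated form of the first equation of~\eqref{eq:ct-mom-dyn} together with the initial condition $\vx_0 = \nabla\psi^*(\vz_0)$---will be needed to reduce a doubly dependent integral to closed form.

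For $\cc_t^f$, I differentiate each of the four summands in turn. The chain rule gives $\frac{\dd}{\dd t}[h(\alpha_t)f(\vx_t)] = h'(\alpha_t)\dot\alpha_t f(\vx_t) + h(\alpha_t)\innp{\nabla f(\vx_t), \dot\vx_t}$; substituting $\dot\vx_t$ from~\eqref{eq:ct-mom-dyn} produces the inner product $(h(\alpha_t)\dot\alpha_t/\alpha_t)\innp{\nabla f(\vx_t), \nabla\psi^*(\vz_t) - \vx_t}$. The fundamental theorem of calculus applied to the two integrals contributes $-h'(\alpha_t)\dot\alpha_t f(\vx_t)$ and $(h(\alpha_t)\dot\alpha_t/\alpha_t)\innp{\nabla f(\vx_t), \vx_t}$, and the derivative of $\psi^*(\vz_t)$ equals $\innp{\nabla\psi^*(\vz_t), \dot\vz_t} = -(h(\alpha_t)\dot\alpha_t/\alpha_t)\innp{\nabla\psi^*(\vz_t), \nabla f(\vx_t)}$. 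The $h'(\alpha_t)\dot\alpha_t f(\vx_t)$ terms cancel, and the three inner products sum to zero since $(\nabla\psi^*(\vz_t) - \vx_t) + \vx_t - \nabla\psi^*(\vz_t) = 0$.

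For $\cc_t$, the key subtlety is that the integrand $D_{\psi^*}(\vz_t, \vz_\sigma)\dot\alpha_\sigma$ of the last integral depends on the upper limit $t$ through $\vz_t$. Applying Leibniz's rule, the boundary term vanishes because $D_{\psi^*}(\vz_t, \vz_t) = 0$, and the remaining interior part equals $\innp{\dot\vz_t,\, \int_0^t (\nabla\psi^*(\vz_t) - \nabla\psi^*(\vz_\sigma))\dot\alpha_\sigma\,\dd\sigma}$. The crucial step is to rewrite the first equation of~\eqref{eq:ct-mom-dyn} as $\frac{\dd (\alpha_t\vx_t)}{\dd t} = \dot\alpha_t \nabla\psi^*(\vz_t)$, integrate from $0$ to $t$, and use $\vx_0 = \nabla\psi^*(\vz_0)$ to obtain
\begin{equation*}
    \int_0^t (\nabla\psi^*(\vz_t) - \nabla\psi^*(\vz_\sigma))\dot\alpha_\sigma\,\dd\sigma = \alpha_t(\nabla\psi^*(\vz_t) - \vx_t) - \alpha_0(\nabla\psi^*(\vz_t) - \nabla\psi^*(\vz_0)).
\end{equation*}
After substituting $\dot\vz_t$ from~\eqref{eq:ct-mom-dyn}, the $\alpha_t$-portion produces a term that cancels the $h(\alpha_t)\dot\alpha_t\innp{\nabla f(\vx_t), \nabla\psi^*(\vz_t) - \vx_t}$ arising from $\frac{\dd}{\dd t}[\beta_t f(\vx_t)]$, while the $\alpha_0$-portion cancels $\frac{\dd}{\dd t}[\alpha_0 D_{\psi^*}(\vz_t, \vz_0)]$. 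The $\dot\beta_t f(\vx_t)$ contribution from $\frac{\dd}{\dd t}[\beta_t f(\vx_t)]$ is matched by the FTC derivative of $-\int_0^t \dot\beta_\sigma f(\vx_\sigma)\,\dd\sigma$.

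The main obstacle is this joint dependence on $t$ and $\sigma$ in the last integral of $\cc_t$, which forces the Leibniz manipulation and the subsequent closed-form reduction via the integrated dynamics. The initial condition $\vx_0 = \nabla\psi^*(\vz_0)$ is essential here: without it, a residual contribution proportional to $\alpha_0(\vx_0 - \nabla\psi^*(\vz_0))$ would survive and spoil the cancellation against $\frac{\dd}{\dd t}[\alpha_0 D_{\psi^*}(\vz_t, \vz_0)]$.
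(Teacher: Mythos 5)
Your proof is correct, but it takes the direct-verification route that the paper explicitly mentions and then deliberately avoids: the paper's appendix states that ``the simplest way'' is to differentiate $\cc_t^f$ and $\cc_t$ directly, and instead gives a longer, constructive derivation that shows \emph{where} the invariants come from. Concretely, the paper starts from the Hamiltonian identity $\frac{\dd}{\dd t}\chm(\overline{\vx}_t,\vz_t,\alpha_t) = \frac{\partial}{\partial t}\chm(\overline{\vx}_t,\vz_t,\alpha_t)$, computes $\frac{\dd}{\dd\alpha_t}\chm$ to obtain the differential identity $h(\alpha_t)\frac{\dd}{\dd t}f(\vx_t) + \frac{\dd}{\dd t}\psi^*(\vz_t) = -h(\alpha_t)\frac{\dot\alpha_t}{\alpha_t}\innp{\nabla f(\vx_t),\vx_t}$, and then integrates: integration by parts yields $\cc_t^f$, while multiplying by $\alpha_t$, integrating, substituting $\vx_\tau = \frac{\alpha_0}{\alpha_\tau}\vx_0 + \frac{1}{\alpha_\tau}\int_0^\tau \dot\alpha_\sigma \nabla\psi^*(\vz_\sigma)\dd\sigma$, exchanging the order of the resulting double integral, and reassembling Bregman divergences yields $\cc_t$. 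You instead differentiate the already-given expressions and handle the $t$-dependence of the integrand $D_{\psi^*}(\vz_t,\vz_\sigma)\dot\alpha_\sigma$ via Leibniz's rule, reducing $\int_0^t(\nabla\psi^*(\vz_t)-\nabla\psi^*(\vz_\sigma))\dot\alpha_\sigma\dd\sigma$ in closed form through the integrated dynamics $\alpha_t\vx_t - \alpha_0\vx_0 = \int_0^t\dot\alpha_\sigma\nabla\psi^*(\vz_\sigma)\dd\sigma$ and the initial condition $\vx_0=\nabla\psi^*(\vz_0)$ (the same two ingredients the paper uses, but deployed in the reverse direction); your cancellation bookkeeping checks out. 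Your approach buys brevity and a self-contained verification; the paper's buys insight into how $\cc_t^f$ and $\cc_t$ are \emph{discovered} as integrals of the conservation law, which is the point emphasized in the surrounding text. One small remark: your differentiation of $\cc_t^f$ implicitly reads the factor $h(\alpha_t)$ inside the third integral of \eqref{eq:ct-fun-cq} as $h(\alpha_\tau)$ (otherwise an uncancelled term $h'(\alpha_t)\dot\alpha_t\int_0^t \frac{\dot\alpha_\tau}{\alpha_\tau}\innp{\nabla f(\vx_\tau),\vx_\tau}\dd\tau$ would survive); this is the intended reading, consistent with the paper's own derivation, so it is a typo in the statement rather than a gap in your argument, but it is worth stating explicitly.
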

 The proof of Lemma~\ref{lemma:ct-gen-mom-cq} is provided in Appendix~\ref{appx:ct-proofs}. 
 
 Let us now provide some context for how conserved quantities $\cc_t^f$ and $\cc_t$ lead to convergence in function value and convergence in gradient norm, respectively. 
 First, when $h(\alpha_t) = \alpha_t$ (in which case~\eqref{eq:ct-mom-dyn} is equivalent to~\eqref{eq:ct-amd}), conservation of $\cc_t^f$ can be shown to be equivalent to the conservation of the scaled approximate duality gap from~\cite{thegaptechnique,AXGD}. More generally, as we show in Section~\ref{appx:conv-in-fun-val}, the conservation of $\cc_t^f$ can be used to upper bound  the optimality gap $f(\vxh_t) - f(\vx^*)$ for some $\vxh_t \in \cx$ that is constructed as a convex combination of $\{\vx_{\tau}\}_{\tau \in [0, t]}.$ 
 
 Consider now $\cc_t.$ If $\frac{\dd}{\dd t}\cc_t = 0$, then it is not hard to check that it must be the case that $\cc_t = 0,$ $\forall t.$ Equivalently, as it also holds that $\frac{\cc_t}{ h(\alpha_t){\alpha_t}} = 0,$ $\forall t$, we have:
 \begin{equation}\label{eq:implications-of-cct}
     \begin{aligned}
  & f(\vx_t)-\frac{h(\alpha_0){\alpha_0}f(\vx_{0}) + \int_0^t \frac{\dd( h(\alpha_\tau) \alpha_\tau)}{\dd\tau} f(\vx_{\tau})\dd \tau} {h(\alpha_t){\alpha_t}} \\
&\hspace{1in}= -\frac{\alpha_0 D_{\psi^*}(\vz_t, \vz_{0}) + \int_0^t D_{\psi^*}(\vz_t, \vz_{\sigma})\dot{\alpha}_\sigma\dd\sigma}{h(\alpha_t){\alpha_t}}.
\end{aligned}
 \end{equation}
 Observe that the right-hand side of \eqref{eq:implications-of-cct} is always non-positive, as $\psi^*$ is assumed to be convex. Suppose for now that the right-hand side of \eqref{eq:implications-of-cct} is strictly negative and less than $-\delta$ for some $\delta >0$. We then have: 
 $
 f(\vx_t) - \frac{h(\alpha_0){\alpha_0}}{h(\alpha_t)\alpha_t}f(\vx_{0}) - \frac{1}{h(\alpha_t)\alpha_t}\int_0^t \frac{\dd( h(\alpha_\tau) \alpha_\tau)}{\dd\tau} f(\vx_{\tau})\dd \tau < -\delta.
 $
 In other words, the function value at the last point $\vx_t$ is strictly smaller than a weighted average of function values at points $\vx_{\tau}$ for $\tau \in [0, t].$ This means that the (weighted) average function value must be strictly decreasing with time. As the function is bounded below, after some finite time it must be that the right-hand side is at least $-\delta.$ Observe that this argument can be made for any $\delta > 0.$ The main idea in the analysis is to show that the inequality
 \begin{equation} \label{eq:bregman-stat-cond}
    \frac{\alpha_0 D_{\psi^*}(\vz_t, \vz_{0}) + \int_0^t D_{\psi^*}(\vz_t, \vz_{\sigma})\dot{\alpha}_\sigma\dd\sigma}{h(\alpha_t){\alpha_t}} \leq \delta
 \end{equation}
 implies that the dynamics must visit at least one point $\vx$ such that $\|\nabla f(\vx)\|_* \leq \epsilon(\delta).$ 
\section{Convergence in Function Value}\label{appx:conv-in-fun-val}

In this section, we show that the invariants implied by the Hamiltonian that generates the momentum-based methods can be used to argue about convergence in function value. We start by arguing about the continuous-time case, and then show how the same invariant can be used analogously to argue about convergence of discretized versions of~\eqref{eq:ct-mom-dyn}. 

All the results will be obtained for the following choice of $h(\alpha_t):$
$$
h(\alpha_t) = {\alpha_t}^{\lambda}, \quad \text{where } \lambda \in [0, 1], 
$$
with the same relationship holding between their corresponding discrete-time counterparts ($A_k$ and $H_k$). This choice of $h(\alpha_t)$ interpolates between the accelerated method~\eqref{eq:ct-amd} (for $\lambda = 1$) and the generalized heavy ball method (for $\lambda = 0$).

%
%
\subsection{Convergence of the Continuous-Time Dynamics}
%
%
%
We now show how Lemma~\ref{lemma:ct-gen-mom-cq} can be used to argue about the convergence in function value of~\eqref{eq:ct-mom-dyn}.

\begin{lemma}\label{lemma:ct-fun-val-conv}
Let $\vx_t,\, \vz_t$ evolve according to~\eqref{eq:ct-mom-dyn} for $h(\alpha_t) = {\alpha_t}^{\lambda},$ $\lambda \in [0, 1]$ and $\vx_0 = \nabla \psi^*(\vz_0) \in \mathrm{rel}\, \mathrm{int}(\cx).$ If $\lambda = 0,$ assume that $\frac{\dot{\alpha}_t}{\alpha_t} = \eta > 0.$ Denote: 
$$
\vxh_t = \begin{cases}
\lambda\frac{{\alpha_t}^{\lambda} \vx_t \,+\, (1-\lambda)\int_0^t \dot{\alpha}_{\tau}{\alpha_{\tau}}^{1-\lambda}\vx_{\tau}\dd\tau \,+\, \frac{1-\lambda}{\lambda}\vx_0}{{\alpha_t}^{\lambda}}, &\text{ if } \lambda \in (0, 1],\\
\frac{\vx_t + \eta\int_0^t \vx_{\tau}\dd\tau}{1+ \eta t}, & \text{ if } \lambda = 0.
\end{cases}
$$ 
Then, $\forall t \geq 0:$
$$
f(\vxh_t) - f(\vx^*)\leq \begin{cases}
\lambda\frac{\frac{\alpha_0}{\lambda}(f(\vx_0) - f(\vx^*)) + D_{\psi}(\vx^*, \vx_0)}{{\alpha_t}^{\lambda}},  &\text{ if } \lambda \in (0, 1],\\
\frac{f(\vx_0) - f(\vx^*) + D_{\psi}(\vx^*, \vx_0)}{1 + \eta t}, & \text{ if } \lambda = 0.
\end{cases}
$$
\end{lemma}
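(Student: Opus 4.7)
The plan is to extract the convergence bound directly from the conserved quantity $\cc_t^f$ in Lemma~\ref{lemma:ct-gen-mom-cq}. Since $\frac{\dd}{\dd t}\cc_t^f = 0$, we have the identity
\begin{equation*}
h(\alpha_t)f(\vx_t) - \int_0^t f(\vx_\tau)\frac{\dd h(\alpha_\tau)}{\dd\tau}\dd\tau + \int_0^t h(\alpha_\tau)\frac{\dot\alpha_\tau}{\alpha_\tau}\innp{\nabla f(\vx_\tau),\vx_\tau}\dd\tau + \psi^*(\vz_t) = h(\alpha_0)f(\vx_0) + \psi^*(\vz_0).
\end{equation*}
The whole proof is a matter of replacing the two nonstandard terms (the gradient-inner-product integral and $\psi^*(\vz_t)$) by quantities involving only $f(\vx_\tau) - f(\vx^*)$ and $D_\psi(\vx^*,\vx_0)$, and then invoking Jensen's inequality.

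First, I would use convexity of $f$ in the form $\innp{\nabla f(\vx_\tau),\vx_\tau} \geq f(\vx_\tau) - f(\vx^*) + \innp{\nabla f(\vx_\tau),\vx^*}$ inside the gradient integral. Second, I would lower-bound $\psi^*(\vz_t)$ by $\innp{\vz_t,\vx^*} - \psi(\vx^*)$ from the definition of the convex conjugate; combining this with the explicit formula $\vz_t = \vz_0 - \int_0^t h(\alpha_\tau)\frac{\dot\alpha_\tau}{\alpha_\tau}\nabla f(\vx_\tau)\dd\tau$ produced by integrating the second equation of~\eqref{eq:ct-mom-dyn}, and with the identity $\psi^*(\vz_0) = \innp{\vz_0,\vx_0} - \psi(\vx_0)$ together with $\innp{\vz_0,\vx^*-\vx_0} = \psi(\vx^*)-\psi(\vx_0)-D_\psi(\vx^*,\vx_0)$ (using $\vx_0 = \nabla\psi^*(\vz_0)$, i.e.\ $\vz_0 = \nabla\psi(\vx_0)$), gives
\begin{equation*}
\psi^*(\vz_t) - \psi^*(\vz_0) \geq -D_\psi(\vx^*,\vx_0) - \int_0^t h(\alpha_\tau)\frac{\dot\alpha_\tau}{\alpha_\tau}\innp{\nabla f(\vx_\tau),\vx^*}\dd\tau.
\end{equation*}
The two $\innp{\nabla f(\vx_\tau),\vx^*}$ integrals then cancel exactly, leaving an inequality whose only remaining terms involve $f$-values at the trajectory points, $f(\vx^*)$, and $D_\psi(\vx^*,\vx_0)$.

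For $h(\alpha_\tau) = \alpha_\tau^\lambda$ with $\lambda \in (0,1]$, the identity $h(\alpha_\tau)\frac{\dot\alpha_\tau}{\alpha_\tau} = \frac{1}{\lambda}\frac{\dd h(\alpha_\tau)}{\dd\tau}$ lets me combine the two $f(\vx_\tau)$ integrals into a single integral with coefficient $\frac{1-\lambda}{\lambda}\frac{\dd h(\alpha_\tau)}{\dd\tau}$, while the $f(\vx^*)$ contributions telescope into $\frac{f(\vx^*)}{\lambda}(h(\alpha_t) - h(\alpha_0))$. After these simplifications, the inequality reads
\begin{equation*}
h(\alpha_t) f(\vx_t) + \tfrac{1-\lambda}{\lambda}\!\int_0^t f(\vx_\tau)\tfrac{\dd h(\alpha_\tau)}{\dd\tau}\dd\tau \leq h(\alpha_0) f(\vx_0) + \tfrac{f(\vx^*)}{\lambda}(h(\alpha_t) - h(\alpha_0)) + D_\psi(\vx^*,\vx_0),
\end{equation*}
and Jensen's inequality applied to the $f$-values on the left (using convexity of $f$) produces $W\,f(\vxh_t)$ for a suitable weighted average $\vxh_t$ with total weight $W$ proportional to $\alpha_t^\lambda$. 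Subtracting $W\,f(\vx^*)$ from both sides, the leftover $f(\vx^*)$ terms collapse to $\alpha_0^\lambda f(\vx^*)$ and one obtains a bound of the form $f(\vxh_t) - f(\vx^*) = O(1/\alpha_t^\lambda)$, matching the lemma's statement. The case $\lambda=0$ is handled separately and is actually simpler: $h\equiv 1$ removes the $\int f(\vx_\tau)\,\dd h(\alpha_\tau)/\dd\tau$ term entirely, $\frac{\dot\alpha_\tau}{\alpha_\tau}=\eta$ is constant, and the same cancellations yield $(1+\eta t)(f(\vxh_t)-f(\vx^*)) \leq f(\vx_0) - f(\vx^*) + D_\psi(\vx^*,\vx_0)$ directly.

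The main technical obstacle is matching the weighted average produced by Jensen to the specific $\vxh_t$ written in the statement, which is a bookkeeping exercise but requires care with the exponent in $\dot\alpha_\tau\alpha_\tau^{\pm(1-\lambda)}$, with the normalization of the weights, and with how the $\vx_0$ term is absorbed; a secondary subtlety is ensuring $\vxh_t \in \cx$ so that convexity is applicable, which follows because each $\vx_\tau \in \cx$ and $\cx$ is convex. No additional machinery beyond Lemma~\ref{lemma:ct-gen-mom-cq}, Fact~\ref{fact:danskin}, and the Bregman identities is required.
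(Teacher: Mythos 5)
Your proposal is correct and follows essentially the same route as the paper's proof: it starts from conservation of $\cc_t^f$, splits $\innp{\nabla f(\vx_\tau), \vx_\tau}$ via convexity into $f(\vx_\tau)-f(\vx^*)$ plus an $\innp{\nabla f(\vx_\tau),\vx^*}$ term that cancels against the momentum contribution, converts the $\psi^*$ terms into $D_\psi(\vx^*,\vx_0)$ using $\vz_0=\nabla\psi(\vx_0)$, and finishes with the same rearrangement and Jensen step. The only cosmetic difference is that you bound $\psi^*(\vz_t)$ directly by Fenchel--Young at $\vx^*$, whereas the paper introduces $\vz^*$ with $\nabla\psi^*(\vz^*)=\vx^*$ and drops $D_{\psi^*}(\vz_t,\vz^*)\ge 0$; these are equivalent.
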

\begin{proof}
Lemma~\ref{lemma:ct-gen-mom-cq} implies that $\cc_t^f = \cc_0^f,$ $\forall t \geq 0.$ Hence, as $h(\alpha_t) ={\alpha_t}^{\lambda}:$
\begin{equation}\label{eq:fun-val-1}
\begin{aligned}
    {\alpha_t}^{\lambda} f(\vx_t) - {\alpha_0}^{\lambda}f(\vx_0)& - \lambda \int_0^t \dot{\alpha}_{\tau}{\alpha_{\tau}}^{\lambda -1} f(\vx_{\tau})\dd\tau\\ 
    &= \psi^*(\vz_0) - \psi^*(\vz_t) - \int_0^t \dot{\alpha}_{\tau}{\alpha_{\tau}}^{\lambda-1}\innp{\nabla f(\vx_{\tau}), \vx_{\tau}}\dd\tau.
\end{aligned}
\end{equation}
Write $- \int_0^t \dot{\alpha}_{\tau}{\alpha_{\tau}}^{\lambda-1}\innp{\nabla f(\vx_{\tau}), \vx_{\tau}}\dd\tau$ as:
\begin{equation}\label{eq:fun-val-2}
\begin{aligned}
    &- \int_0^t \dot{\alpha}_{\tau}{\alpha_{\tau}}^{\lambda-1} \innp{\nabla f(\vx_{\tau}), \vx_{\tau}}\dd\tau\\
    &\hspace{.5in}= \int_0^t \dot{\alpha}_{\tau}{\alpha_{\tau}}^{\lambda-1}\innp{\nabla f(\vx_{\tau}), \vx^* - \vx_{\tau}}\dd\tau - \int_0^t \dot{\alpha}_{\tau}{\alpha_{\tau}}^{\lambda-1}\innp{\nabla f(\vx_{\tau}), \vx^*}\dd\tau.
    \end{aligned}
\end{equation}
Observe that, by convexity of $f:$
\begin{equation}\label{eq:fun-val-3}
    \int_0^t \dot{\alpha}_{\tau}{\alpha_{\tau}}^{\lambda-1}\innp{\nabla f(\vx_{\tau}), \vx^* - \vx_{\tau}}\dd\tau \leq \int_0^t \dot{\alpha}_{\tau}{\alpha_{\tau}}^{\lambda-1} (f(\vx^*) - f(\vx_{\tau}))\dd\tau.
\end{equation}
By the definition of $\vz_t$ from~\eqref{eq:ct-mom-dyn},
\begin{equation}\label{eq:fun-val-4}
    \begin{aligned}
    - \int_0^t \dot{\alpha}_{\tau}{\alpha_{\tau}}^{\lambda-1}\innp{\nabla f(\vx_{\tau}), \vx^*}\dd\tau = \int_0^t \innp{\dot{\vz}_{\tau}, \vx^*}= \innp{\vz_t - \vz_0, \vx^*}.
    \end{aligned}
\end{equation}
The next step is to combine $\innp{\vz_t - \vz_0, \vx^*}$ with $\psi^*(\vz_0) - \psi^*(\vz_t) $ to write them in the form of Bregman divergences. In particular, define $\vz^*$ so that $\nabla\psi^*(\vz^*) = \vx^*.$ Then:
\begin{align*}
    \psi^*(\vz_0) - \psi^*(\vz_t) + \innp{\vz_t - \vz_0, \vx^*} = D_{\psi^*}(\vz_0, \vz^*) - D_{\psi^*}(\vz_t, \vz^*) \leq D_{\psi^*}(\vz_0, \vz^*).
\end{align*}
Using Fact~\ref{fact:danskin} (which implies $\psi^*(\vz) = \innp{\nabla \vz, \nabla \psi^*(\vz)} - \psi(\nabla \psi^*(\vz))$) and $\vz_0 = \nabla \psi(\vx_0)$ (which follows from the assumption that $\vx_0$ is from the relative interior of $\cx$), it is not hard to show that: 
$
D_{\psi^*}(\vz_0, \vz^*) = D_{\psi}(\nabla \psi^*(\vz^*), \nabla \psi^*(\vz_0)) = D_{\psi}(\vx^*, \vx_0).
$ 
Combining with~\eqref{eq:fun-val-1}-\eqref{eq:fun-val-4}:
\begin{align*}
    {\alpha_t}^{\lambda} f(\vx_t) - {\alpha_0}^{\lambda}f(\vx_0)& - \lambda \int_0^t \dot{\alpha}_{\tau}{\alpha_{\tau}}^{\lambda -1} f(\vx_{\tau})\dd\tau\\ 
    &\leq \int_0^t \dot{\alpha}_{\tau}{\alpha_{\tau}}^{\lambda-1} (f(\vx^*) - f(\vx_{\tau}))\dd\tau + D_{\psi}(\vx^*, \vx_0). 
\end{align*}
Assume first that $\lambda > 0$. Integrating and rearranging the terms in the last inequality:
\begin{align*}
    {\alpha_t}^{\lambda} f(\vx_t)\, + \, &(1-\lambda)\int_{0}^t \dot{\alpha}_{\tau}{\alpha_{\tau}}^{\lambda -1} f(\vx_{\tau})\dd\tau + \frac{1-\lambda}{\lambda}f(\vx_0) - \frac{{\alpha_t}^{\lambda}}{\lambda}f(\vx^*)\\
    &\leq \frac{\alpha_0}{\lambda}(f(\vx_0) - f(\vx^*)) + D_{\psi}(\vx^*, \vx_0).
\end{align*}
It remains to divide both sides  by $\frac{{\alpha_t}^{\lambda}}{\lambda}$ and apply Jensen's inequality.

If $\lambda = 0,$ then, assuming $\frac{\dot{\alpha_t}}{\alpha_t} = \eta$:
\begin{align*}
    f(\vx_t) + \eta \int_0^t f(\vx_{\tau})\dd\tau - (1 + \eta t)f(\vx^*) \leq f(\vx_0) - f(\vx^*) + D_{\psi}(\vx^*, \vx_0).
\end{align*}
It remains to divide both sides by $1+\eta t$ and apply Jensen's inequality.
\end{proof}
Observe that when $\lambda = 1$ (that is, when \eqref{eq:ct-mom-dyn} is equivalent to \eqref{eq:ct-amd}), $\vxh_t = \vx_t,$ and we recover the standard guarantee on the last iterate of the accelerated dynamics~\cite{krichene2015accelerated,thegaptechnique,AXGD}. When $\lambda = 0,$ we obtain a $1/t$ convergence rate for the generalization of the heavy ball method. The result applies to constrained optimization and \markupdelete{Banach} \markupadd{non-Euclidean} spaces. We note that a generalization of the heavy ball method to constrained convex optimization was previously considered in~\cite{attouch2000heavy}. However, the result from~\cite{attouch2000heavy} applies only to Hilbert spaces and provides weak (asymptotic) convergence results. The second-order ODE considered in~\cite{attouch2000heavy} seems to correspond to a different continuous-time dynamics than~\eqref{eq:ct-mom-dyn} with $h(\alpha_t) = 1$, and it is unclear how to compare it to~\eqref{eq:ct-mom-dyn}.
\subsection{Discrete-Time Convergence}
Define the discrete-time counterpart to the continuous-time conserved quantity $\cc_t^f,$ $\cc_k^f,$ as:
$$
\cc_k^f = H_k f(\vy_k) - \sum_{i=1}^k h_i f(\vx_i) + \sum_{i=1}^k H_i \frac{a_i}{A_i} \innp{\nabla f(\vx_i), \vx_i} + \psi^*(\vz_k),
$$
where $A_k = \sum_{i=0}^k a_i$, $H_k = \sum_{i=0}^k h_i$, and $H_k = {A_k}^{\lambda}.$ 

The discretization of the continuous-time dynamics that we will use is:
\begin{equation}\label{eq:gen-mom-method-f}\tag{GMD\textsubscript{f}}
    \begin{gathered}
    \vx_k = \frac{H_{k-1}/H_k}{H_{k-1}/H_k + a_k/A_k}\vy_{k-1} + \frac{a_k/A_k}{H_{k-1}/H_k + a_k/A_k} \nabla \psi^*(\vz_{k-1}),\\
    \vz_k = \vz_{k-1} - H_k \frac{a_k}{A_k}\nabla f(\vx_k),\\
    \vy_k = \vx_k + \frac{a_k}{A_k}( \nabla \psi^*(\vz_{k}) - \nabla \psi^*(\vz_{k-1})).
    \end{gathered}
\end{equation}
The motivation for this particular choice of the discretization will become clear from Proposition~\ref{prop:fun-conserv-q}. (In particular, the discretization was chosen to ensure that $\cc_k^f \leq \cc_{k-1}^f.$) Note that when $H_k = A_k$ ($\lambda = 1$), the method is precisely the AGD+ method from~\cite{cohen2018acceleration}. \markupadd{Note that AGD+ is closely related to Nesterov's method~\cite{Nesterov1983} and the accelerated proximal method of G\"{u}ler~\cite{guler1992new}: both Nesterov's and G\"{u}ler's methods can be seen as alternative discretizations of \eqref{eq:ct-amd} (or~\eqref{eq:ct-mom-dyn} with $h(\tau) = \tau$), where $\vy_k$ is replaced by a correction step, which is the gradient descent step for Nesterov's method and the proximal step for G\"{u}ler's method. This interpretation of Nesterov's method can also be found in~\cite{thegaptechnique}.}

For~\eqref{eq:gen-mom-method-f} to apply to constrained minimization, we need to show that the iterates $\vy_k$ remain in the feasible set. This is established by the following proposition.

\begin{proposition}\label{prop:GMD-f-feasibility}
Let $\vx_k,\, \vy_k, \, \vz_k$ evolve as in~\eqref{eq:gen-mom-method-f}, where the initial point satisfies $\vy_0 = \nabla \psi^*(\vz_0) \in \mathrm{rel \, int}\cx$, $\mu \leq L,$ and $a_k,\, A_k,\, H_k >0$ satisfy: $A_k = \sum_{i=0}^k a_i$,  $\frac{{a_k}^2}{{A_k}^2} = c \frac{\mu}{L H_k}$, for some $c\in (0, 1]$, and $H_k = {A_k}^{\lambda}$ for $\lambda \in [0, 1]$. Then $\vy_k \in \cx,$ $\forall k \geq 0.$
\end{proposition}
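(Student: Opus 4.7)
The plan is to prove the proposition by induction on $k$, using a strengthened invariant that tracks $\vy_k$ as a convex combination of the points $\vu_i \defeq \nabla\psi^*(\vz_i)$ for $i = 0, 1, \ldots, k$. Specifically, I claim $\vy_k = \sum_{i=0}^k \gamma_i^{(k)}\vu_i$ with $\gamma_i^{(k)} \geq 0$, $\sum_{i=0}^k \gamma_i^{(k)} = 1$, and crucially $\gamma_k^{(k)} = a_k/A_k$. Since each $\vu_i \in \cx$ by Fact~\ref{fact:danskin}, this immediately yields $\vy_k \in \cx$. The base case $k=0$ is immediate because $\vy_0 = \vu_0$ by hypothesis and $A_0 = a_0$.

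For the inductive step, I would substitute the expression for $\vx_k$ from~\eqref{eq:gen-mom-method-f} into the update $\vy_k = \vx_k + \frac{a_k}{A_k}(\vu_k - \vu_{k-1})$. Writing $c_k = H_{k-1}/H_k + a_k/A_k$, this yields
\[
\vy_k = \frac{H_{k-1}/H_k}{c_k}\vy_{k-1} + \frac{a_k(1 - c_k)}{A_k c_k}\vu_{k-1} + \frac{a_k}{A_k}\vu_k.
\]
Since $c_k = (A_{k-1}/A_k)^\lambda + (1 - A_{k-1}/A_k) \geq 1$ for $\lambda \in [0,1]$, the coefficient on $\vu_{k-1}$ above is nonpositive, so this is not yet a convex combination. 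The rescue is that by the inductive hypothesis $\vy_{k-1}$ already contains $\vu_{k-1}$ with weight exactly $a_{k-1}/A_{k-1}$. Expanding $\vy_{k-1}$ leaves all earlier coefficients $\gamma_i^{(k-1)}$ merely scaled by $H_{k-1}/(H_k c_k) \geq 0$, and the net coefficient on $\vu_{k-1}$ becomes
\[
\frac{H_{k-1}/H_k}{c_k}\cdot\frac{a_{k-1}}{A_{k-1}} + \frac{a_k(1 - c_k)}{A_k c_k}.
\]

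The key calculation is to verify this net coefficient is nonnegative, and this is exactly where the parameter condition $\frac{a_k^2}{A_k^2} = c\frac{\mu}{LH_k}$ enters. That condition implies $(a_{k-1}/A_{k-1})/(a_k/A_k) = \sqrt{H_k/H_{k-1}}$, which reduces the required inequality to $\sqrt{H_{k-1}/H_k} \geq c_k - 1$. Setting $t = A_{k-1}/A_k \in (0,1]$ and using $H_k = A_k^\lambda$, this becomes the elementary inequality $t^{\lambda/2} \geq t^\lambda - t$, which holds because $t^{\lambda/2} \geq t^\lambda$ for $t \in (0,1]$, $\lambda \in [0,1]$. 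A direct computation also verifies that $\gamma_k^{(k)} = a_k/A_k$ and that $\sum_i \gamma_i^{(k)} = 1$ (the latter using $\sum_i \gamma_i^{(k-1)} = 1$ together with the identity $\frac{H_{k-1}/H_k}{c_k} + \frac{a_k(1 - c_k)}{A_k c_k} + \frac{a_k}{A_k} = 1$), closing the induction.

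The main obstacle is identifying this refined invariant rather than grinding through calculations: a naive induction tracking only the statement $\vy_k \in \cx$ is doomed because for $\lambda < 1$ one has $c_k > 1$ strictly, which produces a strictly negative raw coefficient on $\vu_{k-1}$ in the single-step expansion above. Only by bookkeeping the precise weight $a_{k-1}/A_{k-1}$ inherited from $\vy_{k-1}$ can this negative term be absorbed, and the elementary inequality $t^{\lambda/2} \geq t^\lambda - t$ is exactly the algebraic statement that the absorption works out.
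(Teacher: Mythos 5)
Your proof is correct and follows essentially the same route as the paper's: an induction expressing $\vy_k$ as a convex combination of $\{\nabla\psi^*(\vz_i)\}_{i=0}^k$, with the single-step expansion $\vy_k = (1-\theta_k')\vy_{k-1} + (\theta_k'-\theta_k)\nabla\psi^*(\vz_{k-1}) + \theta_k\nabla\psi^*(\vz_k)$ and the negative coefficient on $\nabla\psi^*(\vz_{k-1})$ absorbed by the weight $a_{k-1}/A_{k-1}$ it carries inside $\vy_{k-1}$. The only cosmetic difference is that you verify nonnegativity of the net coefficient via the exact step-size ratio $\theta_{k-1}/\theta_k = \sqrt{H_k/H_{k-1}}$ and the inequality $t^{\lambda/2}\geq t^{\lambda}-t$, whereas the paper uses only the monotonicity $\theta_k\leq\theta_{k-1}$ together with $\theta_k\in[0,1]$; both close the same gap.
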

\begin{proof}
The claim clearly holds for $k = 0,$ by the initialization. To simplify the notation, denote $\theta_k = \frac{a_k}{A_k}$, $\theta_k' = \frac{a_k/A_k}{H_{k-1}/H_k + a_k/A_k} = \frac{\theta_k}{H_{k-1}/H_k + \theta_k},$ $\vv_k = \nabla \psi^*(\vz_k).$ By the definition of a convex conjugate (Definition~\ref{def:cvx-conj}) and Fact~\ref{fact:danskin}, $\vv_k \in \cx,$ $\forall k.$



Under the assumptions of the proposition, it is not hard to see that $\theta_k \leq \theta_{k-1},$ $\forall k \geq 1.$ Namely, this condition is equivalent to $\frac{c\mu}{LH_{k}} \leq \frac{c\mu}{L H_{k-1}},$ which, by the definition of $H_k,$ is equivalent to ${A_{k-1}}^{\lambda} \leq {A_k}^{\lambda}.$ As $A_k$ is non-increasing with $k$ (as $A_k = \sum_{i=0}^k a_i$ and $a_i \geq 0,$ $\forall i$) and $\lambda \geq 0,$ we clearly have that ${A_{k-1}}^{\lambda} \leq {A_k}^{\lambda},$ and, thus $\theta_k \leq \theta_{k-1}.$

To prove the proposition, we will first show that $\vy_k$ can be expressed as a non-negative linear combination of $\{\vv_i\}_{i=0}^k.$ We subsequently show by induction on $k$ that the coefficients of that linear combination must sum to one, which completes the proof.

Using~\eqref{eq:gen-mom-method-f}, we can write $\vy_k$ in the following recursive form:
$$
\vy_k = (1-\theta_k')\vy_{k-1} + \theta_k' \vv_{k-1} + \theta_k(\vv_{k} - \vv_{k-1}).
$$
Applying this definition recursively over $i = 0,\dots, k$ and using $\vy_0 = \vv_0,$ we have
$$
\vy_k = \sum_{i=0}^k \gamma_{i, k} \vv_i,
$$
$$
\gamma_{i, k} = \begin{cases}
\theta_k, & \text{ if } i = k;\\
\Big[\prod_{j= i + 2}^k (1- \theta_j') \Big] \Big[\theta_{i+1}'(1 - \theta_i) + \theta_i - \theta_{i+1}\Big], &\text{ if } 1\leq i\leq k-1;\\
\Big[\prod_{j= 1}^k (1- \theta_j'), &\text{ if } i = 0,
\end{cases}
$$
where, by convention, we take $\prod_i^j (\cdot) = 1$ whenever $j < i.$ 
Given that for all $i \geq 0,$ we have $\theta_i, \theta_i' \in [0, 1]$ and $\theta_{i+1} \leq \theta_i,$ it immediately follows that $\gamma_{i, k} \geq 0,$ $\forall i \in \{0, 1, ... , k\}.$

We now show by induction on $k$ that it must be the case that $\sum_{i=0}^k\gamma_{i, k} = 1.$ This clearly holds for $k=0.$ Suppose that it holds for some $k - 1 \geq 0.$ Then $\vy_{k-1} \in \cx.$ As $\vx_k = (1-\theta_k')\vy_{k-1} + \theta_k' \vv_{k-1},$ it follows that $\vx_k \in \cx,$ and, moreover, $\vx_k$ is a convex combination of $\{\vv_i\}_{i=0}^{k-1}.$ Now, observe from the definition of $\vx_k$ that $\theta_k'\vv_{k-1}' = \vx_k - (1-\theta_k')\vy_{k-1}.$ Hence, we can express $\vy_k$ as:
$$
\vy_k = (1-\theta_k/\theta_k')\vx_k  + \frac{\theta_k}{\theta_k'}(1-\theta_k')\vy_{k-1} + \theta_k \vv_k.
$$
As $1 - \frac{\theta_k}{\theta_k'} + \frac{\theta_k}{\theta_k'}(1-\theta_k') + \theta_k = 1$ and each $\vx_k, \, \vy_{k-1},\, \vv_k$ are convex combinations of $\{\vv_i\}_{i=0}^k,$ it follows that $\sum_{i=0}^k \gamma_{i, k} = 1,$ which, together with the fact that $\gamma_{i, k} \geq 0,$ $\forall i,$ completes the proof.
\end{proof}

\begin{proposition}\label{prop:fun-conserv-q}
Let $\vx_k,\, \vy_k, \, \vz_k$ evolve according to~\eqref{eq:gen-mom-method-f}, where $\psi:\cx \rightarrow \mathbb{R}$ is a $\mu$-strongly convex function, $\vy_0 = \nabla \psi^*(\vz_0) \in \mathrm{rel \, int}\cx$ and $a_k,\, A_k,\, H_k >0$ satisfy: $A_k = \sum_{i=0}^k a_i$ and $\frac{{a_k}^2}{{A_k}^2}\leq \frac{\mu}{L H_k}$. Then $\cc_k^f \leq \cc_{k-1}^f,$ $\forall k \geq 1.$
\end{proposition}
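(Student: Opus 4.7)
The goal is to bound the single-step difference
$\cc_k^f - \cc_{k-1}^f = H_k f(\vy_k) - H_{k-1} f(\vy_{k-1}) - h_k f(\vx_k) + H_k\frac{a_k}{A_k}\innp{\nabla f(\vx_k),\vx_k} + \psi^*(\vz_k) - \psi^*(\vz_{k-1})$
and show it is nonpositive under the prescribed step-size relation. The plan has two parallel threads: one that processes the function-value terms using smoothness and convexity of $f$, and one that processes the $\psi^*$ terms using duality between smoothness and strong convexity. At the end these two threads are reconciled by the step-size condition $(a_k/A_k)^2 \leq \mu/(L H_k)$.

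For the function-value thread, I would apply $L$-smoothness of $f$ at $\vx_k$ to bound $H_k f(\vy_k)$ from above, and convexity of $f$ at $\vx_k$ to bound $-H_{k-1}f(\vy_{k-1})$ from above (the section is devoted to convex minimization, so this is free). The $f(\vx_k)$ contributions collapse exactly against $-h_k f(\vx_k)$ because $H_k = H_{k-1} + h_k$, leaving only an inner-product term with coefficient $H_k\vy_k - H_{k-1}\vy_{k-1} - h_k\vx_k$ together with the smoothness slack $\tfrac{L H_k}{2}\|\vy_k - \vx_k\|^2$. The key algebraic step is to show the identity
\begin{equation*}
H_k\vy_k - H_{k-1}\vy_{k-1} - h_k\vx_k \;=\; H_k\tfrac{a_k}{A_k}\bigl(\nabla\psi^*(\vz_k) - \vx_k\bigr),
\end{equation*}
which follows by substituting $\vy_k = \vx_k + \tfrac{a_k}{A_k}(\nabla\psi^*(\vz_k) - \nabla\psi^*(\vz_{k-1}))$ and then rewriting $H_{k-1}(\vx_k - \vy_{k-1})$ via the first equation of~\eqref{eq:gen-mom-method-f}. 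With this identity, the $\innp{\nabla f(\vx_k),\vx_k}$ piece of $\cc_k^f - \cc_{k-1}^f$ cancels and only $H_k\tfrac{a_k}{A_k}\innp{\nabla f(\vx_k),\nabla\psi^*(\vz_k)}$ survives from the function-value side.

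For the $\psi^*$ thread, the update $\vz_k - \vz_{k-1} = - H_k\tfrac{a_k}{A_k}\nabla f(\vx_k)$ converts that surviving inner product into $\innp{\vz_{k-1} - \vz_k, \nabla\psi^*(\vz_k)}$, which combines with $\psi^*(\vz_k) - \psi^*(\vz_{k-1})$ to form exactly $-D_{\psi^*}(\vz_{k-1},\vz_k)$. Applying Fact~\ref{fact:bregman-properties}(ii), which uses that $\psi$ is $\mu$-strongly convex, gives $D_{\psi^*}(\vz_{k-1},\vz_k) \geq \tfrac{\mu}{2}\|\nabla\psi^*(\vz_k) - \nabla\psi^*(\vz_{k-1})\|^2$. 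Combined with $\|\vy_k - \vx_k\|^2 = (a_k/A_k)^2\|\nabla\psi^*(\vz_k) - \nabla\psi^*(\vz_{k-1})\|^2$, this leaves
\begin{equation*}
\cc_k^f - \cc_{k-1}^f \;\leq\; \tfrac{1}{2}\bigl(L H_k (a_k/A_k)^2 - \mu\bigr)\,\|\nabla\psi^*(\vz_k) - \nabla\psi^*(\vz_{k-1})\|^2,
\end{equation*}
which is nonpositive precisely under the hypothesis ${a_k}^2/{A_k}^2 \leq \mu/(L H_k)$.

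I expect the main obstacle to be the bookkeeping for the identity in the second paragraph: the three-term expression $H_k\vy_k - H_{k-1}\vy_{k-1} - h_k\vx_k$ mixes iterates at different indices, and one must carefully use both the extrapolation rule for $\vy_k$ and the convex combination defining $\vx_k$ (noting the nonobvious coefficient $\tfrac{H_{k-1}/H_k}{H_{k-1}/H_k + a_k/A_k}$). Everything else is a mechanical combination of standard smoothness/convexity inequalities with the Bregman identity relating $\psi^*(\vz_k) - \psi^*(\vz_{k-1}) + \innp{\vz_{k-1}-\vz_k,\nabla\psi^*(\vz_k)}$ to $-D_{\psi^*}(\vz_{k-1},\vz_k)$.
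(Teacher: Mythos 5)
Your proposal is correct and follows essentially the same route as the paper's proof: bound the function-value terms via smoothness and convexity at $\vx_k$, convert the $\psi^*$ terms into $-D_{\psi^*}(\vz_{k-1},\vz_k)$, lower-bound that Bregman divergence using $\mu$-strong convexity of $\psi$, and close with the step-size condition $H_k a_k^2/A_k^2 \leq \mu/L$. The only cosmetic difference is that the paper treats the vanishing of the coefficient $H_k\vy_k - H_{k-1}\vy_{k-1} - h_k\vx_k + H_k\tfrac{a_k}{A_k}(\vx_k - \nabla\psi^*(\vz_k))$ as the design equation defining~\eqref{eq:gen-mom-method-f}, whereas you verify the equivalent cancellation identity directly from the update rules, which checks out.
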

\begin{proof}
By the definition of $\cc_k^f,$ we have:
\begin{equation}\label{eq:fcq-1}
\begin{aligned}
    \cc_k^f - \cc_{k-1}^f =&\ H_k f(\vy_k) - H_{k-1}f(\vy_{k-1}) - h_k f(\vx_k) + H_k \frac{a_k}{A_k}\innp{\nabla f(\vx_k), \vx_k}\\
    &+ \psi^*(\vz_k) - \psi^*(\vz_{k-1}).
\end{aligned}
\end{equation}
Observe first, by smoothness and convexity of $f:$
\begin{equation}\label{eq:fcq-2}
    \begin{aligned}
    H_k f(\vy_k) - H_{k-1} & f(\vy_{k-1}) - h_k f(\vx_k)\\
    &= H_k(f(\vy_k) - f(\vx_k)) + H_{k-1}(f(\vx_k) - f(\vy_{k-1}))\\
    &\leq \innp{\nabla f(\vx_k), H_k\vy_k - H_{k-1}\vy_{k-1} - h_k \vx_k} + H_k\frac{L}{2}\|\vy_k - \vx_k\|^2.
    \end{aligned}
\end{equation}
On the other hand, by the definitions of a Bregman divergence and $\vz_k:$
\begin{equation}\label{eq:fcq-3}
    \begin{aligned}
    \psi^*(\vz_k) - \psi^*(\vz_{k-1}) &= -D_{\psi^*}(\vz_{k-1}, \vz_k) - \innp{\nabla \psi^*(\vz_{k-1}), \vz_{k-1} - \vz_k}\\
    &= -D_{\psi^*}(\vz_{k-1}, \vz_k) - H_k \frac{a_k}{A_k}\innp{\nabla f(\vx_k), \nabla \psi^*(\vz_{k-1})}.
    \end{aligned}
\end{equation}
As $\psi$ is $\mu$-strongly convex, we have (by Fact~\ref{fact:bregman-properties}): $D_{\psi^*}(\vz_{k-1}, \vz_k) \geq \frac{\mu}{2}\|\nabla \psi^*(\vz_k) - \nabla \psi^*(\vz_{k-1})\|^2.$ Hence, combining~\eqref{eq:fcq-1}-\eqref{eq:fcq-3}:
\begin{align*}
    \cc_k^f - \cc_{k-1}^f \leq &\ H_k\frac{L}{2}\|\vy_k - \vx_k\|^2 - \frac{\mu}{2}\|\nabla \psi^*(\vz_k) - \nabla \psi^*(\vz_{k-1})\|^2\\
    &+ \Big\langle{\nabla f(\vx_k), H_k \vy_k - H_{k-1}\vy_{k-1} - h_k \vx_k + H_k \frac{a_k}{A_k}(\vx_k - \nabla \psi^*(\vz_k))}\Big\rangle.
\end{align*}
Note that we want to make the right-hand side of the last inequality non-positive. To do so, we can make the last term equal to zero by setting: $H_k \vy_k - H_{k-1}\vy_{k-1} - h_k \vx_k + H_k \frac{a_k}{A_k}(\vx_k - \nabla \psi^*(\vz_k)) = 0.$ To make the first two terms non-positive, we require $\vy_k - \vx_k = \frac{a_k}{A_k}(\nabla \psi^*(\vz_k) - \nabla\psi^*(\vz_{k-1})).$\footnote{Note that the multiplier $\frac{a_k}{A_k}$ on the right-hand side is necessary here for $\vx_k$ to be explicitly defined. Any other factor would make $\vx_k$ depend on $\nabla \psi^*(\vz_k)$, which is a function of $\vx_k$ (as $\vz_k = \vz_{k-1} - H_k\frac{a_k}{A_k}\nabla f(\vx_k)$), and would thus make $\vx_k$ be only implicitly defined.} Solving these last two equations for $\vx_k$ gives~\eqref{eq:gen-mom-method-f}. We conclude by using $H_k\frac{{a_k}^2}{{A_k}^2} \leq \frac{\mu}{L}.$
\end{proof}
To obtain a convergence rate for~\eqref{eq:gen-mom-method-f}, it remains to show that $\cc_k^f \leq \cc_0^f$ implies a convergence in function value for~\eqref{eq:gen-mom-method-f}, as was done for the continuous-time case in Lemma~\ref{lemma:ct-fun-val-conv}. 
\begin{theorem}\label{thm:conv-fun-val-d}
Let $\vx_k, \, \vy_k,\, \vz_k$ evolve according to~\eqref{eq:gen-mom-method-f}, where $\vx_0 = \vy_0 = \nabla \psi^*(\vz_0) \in \mathrm{rel \, int}\cx,$ $\psi: \cx \rightarrow \mathbb{R}$ is $\mu$-strongly convex, $H_k = {A_k}^{\lambda},$ $\lambda \in [0, 1],$ $A_k = \sum_{i=0}^k a_k,$ $a_0 = 1$, and $\frac{{a_k}^2}{{A_k}^2} = c\frac{\mu}{L H_i},$ for $c \in (0, 1],$ $k \geq 1.$ Define:
$$
\vxh_k = \frac{H_k \vy_k + \sum_{i=1}^k (\frac{a_i}{A_i}H_i - h_i)\vx_i}{H_0 + \sum_{i=1}^k \frac{a_i}{A_i}H_i}.
$$
Then, $\forall k \geq 1,$ $\vxh_k \in \cx$ and:
$$
f(\vxh_k) - f(\vx^*) \leq \begin{cases}
\frac{f(\vx_0) - f(\vx*) + D_{\psi}(\vx^*, \vx_0)}{1 + \sqrt{c\mu/L}k}, & \text{ if } \lambda = 0, k \geq 1\\
\Theta(1) \lambda^{\lambda} \frac{L}{c\mu}\frac{f(\vx_0) - f(\vx^*) + D_{\psi}(\vx^*, \vx_0)}{k^2}, & \text{ if } \lambda \in (0, 1],\, k = \Omega(1/\lambda).
\end{cases}
$$
\end{theorem}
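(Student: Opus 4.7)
The plan is to mirror the continuous-time argument of Lemma~\ref{lemma:ct-fun-val-conv} at the level of the discrete conserved quantity $\cc_k^f$. Proposition~\ref{prop:fun-conserv-q} yields $\cc_k^f \leq \cc_0^f$; expanding the definitions and using $\vx_0 = \vy_0$, this reads
\begin{equation*}
H_k f(\vy_k) - \sum_{i=1}^k h_i f(\vx_i) + \sum_{i=1}^k H_i\frac{a_i}{A_i} \innp{\nabla f(\vx_i), \vx_i} + \psi^*(\vz_k) \leq H_0 f(\vx_0) + \psi^*(\vz_0).
\end{equation*}
I would first invoke convexity, $\innp{\nabla f(\vx_i), \vx_i} \geq f(\vx_i) - f(\vx^*) + \innp{\nabla f(\vx_i), \vx^*}$, to introduce $f(\vx^*)$, and then telescope $\sum_i H_i \frac{a_i}{A_i} \innp{\nabla f(\vx_i), \vx^*} = \innp{\vz_0 - \vz_k, \vx^*}$ using the $\vz$-update in~\eqref{eq:gen-mom-method-f}. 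Combining with $\psi^*(\vz_k) - \psi^*(\vz_0)$, the expression $\psi^*(\vz_k) - \psi^*(\vz_0) + \innp{\vz_0 - \vz_k, \vx^*}$ collapses into $D_{\psi^*}(\vz_k, \vz^*) - D_{\psi^*}(\vz_0, \vz^*)$ for $\vz^* := \nabla \psi(\vx^*)$, exactly as in the proof of Lemma~\ref{lemma:ct-fun-val-conv}. Dropping the non-negative $D_{\psi^*}(\vz_k, \vz^*)$ and using $D_{\psi^*}(\vz_0, \vz^*) = D_\psi(\vx^*, \vx_0)$ (which holds because $\vz_0 = \nabla \psi(\vx_0)$) yields, after using $H_0 = 1$ and $H_k - H_0 = \sum_i h_i$,
\begin{equation*}
H_k (f(\vy_k) - f(\vx^*)) + \sum_{i=1}^k \Big(H_i \frac{a_i}{A_i} - h_i\Big)(f(\vx_i) - f(\vx^*)) \leq f(\vx_0) - f(\vx^*) + D_\psi(\vx^*, \vx_0).
\end{equation*}

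Next, I would verify that the weights $H_k$ and $H_i\tfrac{a_i}{A_i} - h_i = A_{i-1}(A_{i-1}^{\lambda - 1} - A_i^{\lambda - 1})$ are non-negative for $\lambda \in [0,1]$, which permits Jensen's inequality. Their total equals $W_k := H_0 + \sum_{i=1}^k H_i \tfrac{a_i}{A_i}$, and dividing by $W_k$ gives $f(\vxh_k) - f(\vx^*) \leq [f(\vx_0) - f(\vx^*) + D_\psi(\vx^*, \vx_0)]/W_k$. Feasibility $\vxh_k \in \cx$ follows by the same inductive reasoning as in Proposition~\ref{prop:GMD-f-feasibility}, since $\vxh_k$ is a convex combination of $\vy_k$ and the $\vx_i$'s, all of which lie in $\cx$.

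The remaining quantitative step, which is the main obstacle, is to lower-bound $W_k$. The step-size rule $\tfrac{a_k^2}{A_k^2} = c\mu/(L H_k)$ with $H_k = A_k^\lambda$ is equivalent to $a_k = \sqrt{c\mu/L}\, A_k^{1-\lambda/2}$. For $\lambda = 0$ this makes $a_k/A_k = \sqrt{c\mu/L}$ constant, so $W_k = 1 + k\sqrt{c\mu/L}$ exactly, delivering the first claim. For $\lambda \in (0,1]$, concavity of $A \mapsto A^{\lambda/2}$ gives $A_k^{\lambda/2} - A_{k-1}^{\lambda/2} \geq (\lambda/2) A_k^{\lambda/2 - 1} a_k = (\lambda/2)\sqrt{c\mu/L}$, so $A_k^{\lambda/2} \geq 1 + (\lambda k /2)\sqrt{c\mu/L}$. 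Because $H_i \tfrac{a_i}{A_i} = \sqrt{c\mu/L}\, A_i^{\lambda/2}$, summing and using $k = \Omega(1/\lambda)$ to absorb the initialization term $H_0 = 1$ forces $W_k = \Omega(\lambda (c\mu/L) k^2)$, which inverts to the claimed $k^{-2}$ rate.

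The delicate aspect is tracking the precise $\lambda$-dependence through the weighted-average construction: both the numerator (via the $H_k$ weight on $\vy_k$ against the sum of the $(H_i\tfrac{a_i}{A_i} - h_i)$ on the $\vx_i$'s) and the denominator $W_k$ carry hidden $\lambda$-dependent factors, and a careful accounting of the interplay between these two sums is what produces the specific prefactor stated. A useful sanity check is the endpoint $\lambda = 1$, where $H_i\tfrac{a_i}{A_i} - h_i \equiv 0$ so $\vxh_k = \vy_k$ and the bound reduces to the known $O(L/(c\mu k^2))$ guarantee for AGD+.
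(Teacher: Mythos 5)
Your proposal is correct and follows the same route as the paper's proof in all essentials: feasibility of $\vxh_k$ as a convex combination of $\vy_k$ and the $\vx_i$'s (via Proposition~\ref{prop:GMD-f-feasibility}), the monotonicity $\cc_k^f \leq \cc_0^f$ from Proposition~\ref{prop:fun-conserv-q}, the split of $\innp{\nabla f(\vx_i), \vx_i}$ using convexity, the telescoping of the $\vz$-update against $\vx^*$, the collapse into $D_{\psi^*}(\vz_0, \vz^*) - D_{\psi^*}(\vz_k, \vz^*) \leq D_{\psi}(\vx^*, \vx_0)$, and Jensen's inequality with the nonnegative weights $H_k$ and $H_i \tfrac{a_i}{A_i} - h_i$. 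The only genuine difference is the final quantitative step. The paper posits asymptotic power-law forms $a_i \propto i^{2/\lambda - 1}$, $A_i \propto i^{2/\lambda}$ consistent with $\tfrac{a_i^2}{A_i^{2-\lambda}} = \tfrac{c\mu}{L}$ and reads off the growth of $H_i \tfrac{a_i}{A_i}$ from that ansatz, whereas you rewrite the step-size rule as $H_i \tfrac{a_i}{A_i} = \sqrt{c\mu/L}\, A_i^{\lambda/2}$ and use concavity of $A \mapsto A^{\lambda/2}$ to obtain the telescoping lower bound $A_k^{\lambda/2} \geq 1 + \tfrac{\lambda k}{2}\sqrt{c\mu/L}$; this is cleaner and arguably more rigorous, since it avoids having to verify that the ansatz actually satisfies the step-size constraint (the paper's stated proportionality constants only do so up to a $\lambda$-dependent correction). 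The one discrepancy is the prefactor: your route gives $W_k = \Omega\big(\lambda \tfrac{c\mu}{L} k^2\big)$, hence a $1/\lambda$ factor in the final rate, while the theorem statement carries $\Theta(1)\lambda^{\lambda}$; for $\lambda$ bounded away from zero both are $\Theta(1)$, so the substance of the $1/k^2$ rate is unchanged, and a direct check of the correct growth constant ($A_i^{\lambda/2} \approx \tfrac{\lambda}{2}\sqrt{c\mu/L}\, i$, so $H_i\tfrac{a_i}{A_i} \approx \tfrac{\lambda}{2}\tfrac{c\mu}{L} i$) indicates that your $\lambda$-dependence is in fact the accurate one.
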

\begin{proof}
By Proposition~\ref{prop:GMD-f-feasibility},  $\vy_k \in \cx,$ $\forall k.$ As $\vx_k \in \cx$ (as a convex combination of $\vy_{k-1}, \, \nabla \psi^*(\vz_{k-1})\in \cx$), we have that $\vxh_k$ is a convex combination of points from the feasible space $\cx,$ and, thus, it must be the case that $\vxh_k \in \cx,$ $\forall k.$

By Proposition~\ref{prop:fun-conserv-q}, $\cc_k^f \leq \cc_{0}^f.$ Hence:
\begin{equation}\label{eq:gmd-f-bnd-1}
    \begin{aligned}
    H_kf(\vy_k) - H_0 f(\vy_0) - \sum_{i=1}^k h_i f(\vx_i) \leq \psi^*(\vz_0) - \psi^*(\vz_k) - \sum_{i=1}^k H_i \frac{a_i}{A_i}\innp{\nabla f(\vx_i), \vx_i}.
    \end{aligned}
\end{equation}
As in Lemma~\ref{lemma:ct-fun-val-conv}, write $- \sum_{i=1}^k H_i \frac{a_i}{A_i}\innp{\nabla f(\vx_i), \vx_i}$ as:
\begin{equation}\label{eq:gmd-f-bnd-2}
    - \sum_{i=1}^k H_i \frac{a_i}{A_i}\innp{\nabla f(\vx_i), \vx_i} = \sum_{i=1}^k H_i \frac{a_i}{A_i}\innp{\nabla f(\vx_i), \vx^* - \vx_i} - \sum_{i=1}^k H_i \frac{a_i}{A_i}\innp{\nabla f(\vx_i), \vx^*}.
\end{equation}
By convexity of $f:$
\begin{equation}\label{eq:gmd-f-bnd-3}
    \sum_{i=1}^k H_i \frac{a_i}{A_i}\innp{\nabla f(\vx_i), \vx^* - \vx_i} \leq \sum_{i=1}^k H_i \frac{a_i}{A_i}(f(\vx^*) - f(\vx_i)).
\end{equation}
Let $\vz^*$ be such that $\nabla \psi^*(\vz^*) = \vx^*.$ Then, by the same arguments as in the proof of Lemma~\ref{lemma:ct-fun-val-conv}:
\begin{equation}\label{eq:gmd-f-bnd-4}
\begin{aligned}
    \psi^*(\vz_0) - \psi^*(\vz_k) - \sum_{i=1}^k H_i \frac{a_i}{A_i}\innp{\nabla f(\vx_i), \vx^*} &= D_{\psi^*}(\vz_0, \vz^*) - D_{\psi^*}(\vz_t, \vz^*)\\
    &\leq D_{\psi}(\vx^*, \vx_0). 
\end{aligned}
\end{equation}
Combining~\eqref{eq:gmd-f-bnd-1}-\eqref{eq:gmd-f-bnd-4}:
\begin{equation}\notag
    H_kf(\vy_k) - H_0 f(\vy_0) - \sum_{i=1}^k h_i f(\vx_i) \leq \sum_{i=1}^k H_i \frac{a_i}{A_i}(f(\vx^*) - f(\vx_i)) + D_{\psi}(\vx^*, \vx_0). 
\end{equation}
To complete the proof, it remains to rearrange the terms in the last equation. Notice that $\sum_{i=1}^k h_i = H_k - H_0,$ and, thus, the coefficients multiplying $f(\cdot)$ sum up to zero. Notice also that, as $H_i = {A_i}^{\lambda},$ $a_i = A_{i} - A_{i-1},$ and $h_i = H_i - H_{i-1},$ it must be the case that $H_i \frac{a_i}{A_i} - h_i \geq 0.$ We have:
\begin{align*}
H_k f(\vy_k) + \sum_{i=1}^k \Big(H_i \frac{a_i}{A_i} - h_i\Big)f(\vx_i) - &\Big(H_0 + \sum_{i=1}^k H_i \frac{a_i}{A_i}\Big)f(\vx^*)\\
&\leq H_0(f(\vx_0) - f(\vx^*)) + D_\psi(\vx^*, \vx_0).
\end{align*}
Dividing both sides of the last equation by $\Big(H_0 + \sum_{i=1}^k H_i \frac{a_i}{A_i}\Big)$ and applying Jensen's inequality:
$$
f(\vxh_t) - f(\vx^*) \leq \frac{H_0(f(\vy_0) - f(\vx^*)) + D_{\psi}(\vx^*, \vx_0)}{\Big(H_0 + \sum_{i=1}^k H_i \frac{a_i}{A_i}\Big)}.
$$
Recall that, as $a_0 = 1,$ we must have $H_0 = 1.$ To bound $H_0 + \sum_{i=1}^k H_i \frac{a_i}{A_i},$ we need to argue about the growth of $\frac{a_i}{A_i}H_i = \frac{a_i}{{A_i}^{1-\lambda}}.$ 
When $\lambda = 0,$ we have $\frac{a_i}{{A_i}^{1-\lambda}} = \sqrt{\frac{c\mu}{L}}.$ 
Assume now that $\lambda > 0$ and $k = \Omega(\frac{1}{\lambda})$. By assumption of the theorem, $\frac{{a_i}^2}{{A_i}^{2-\lambda}} = \frac{c\mu}{L}.$ If $a_i \propto \big(\frac{c\mu \lambda}{2L}\big)^{1/\lambda} i^{2/\lambda - 1}$ and $i = \Omega(\frac{1}{\lambda}),$ then $A_i \propto \frac{\lambda}{2} \big(\frac{c\mu \lambda}{2L}\big)^{1/\lambda} i^{2/\lambda},$ and it can be ensured that  $\frac{{a_i}^2}{{A_i}^{2-\lambda}} = \frac{c\mu}{L}$ holds. Thus, for $i \geq k/2,$  $\frac{a_i}{{A_i}^{1-\lambda}} = \Theta(1)\lambda^{\lambda}\frac{c\mu}{L}i.$ 
Hence, $\sum_{i=1}^k \frac{a_i}{A_i}H_i$ scales as $\Theta(k)\sqrt{\frac{c\mu}{L}}$ for $\lambda = 0$ and  $\Theta(k^2)\lambda^{\lambda}\frac{c\mu}{L}$ for $\lambda > 0$ and $k = \Omega(\frac{1}{\lambda}).$ 
\end{proof}
Observe that a generalization of the heavy ball method (obtained from~\eqref{eq:gen-mom-method-f} for $H_k = {A_k}^0 = 1$) converges at rate $1/k$ for smooth convex functions. The result applies to constrained minimization and general normed vector spaces. Note that such a (nonasymptotic) result was previously known only for the setting of unconstrained minimization in Euclidean spaces~\cite{ghadimi2015global}. 
%

%
%
%
%
\section{Convergence in Norm of the Gradient}\label{sec:dt-methods}
In this section we turn to convergence in terms of the norm of the gradient.  We focus on discrete-time methods, based on a discrete-time counterpart to the conserved quantity $\cc_t$. Throughout the section, we assume that $\cx \equiv E.$ We start the section with an overview of the approach and a structural (algorithm-independent) lemma that is used later in proving the convergence results. We then present the results for \markupdelete{Hilbert} \markupadd{Euclidean} spaces in Section~\ref{sec:hilbert}, and show how these results can be generalized to \markupdelete{Banach} \markupadd{non-Euclidean} spaces in Section~\ref{sec:banach}. 
%
%
%
\subsection{An Overview of the Approach and a Structural Lemma}

We consider a counterpart to  $\cc_t$ that was derived for general momentum methods and defined in Lemma~\ref{lemma:ct-gen-mom-cq}. This counterpart is defined as:
\begin{equation}\label{eq:discr-cq}
    \cc_k = {B_k} f(\vy_{k}) - \sum_{i=0}^k b_i f(\vy_{i}) + \sum_{i=0}^k a_i D_{\psi^*}(\vz_{k}, \vz_{i}),
\end{equation}
where $A_k = \sum_{i=0}^k a_i$, $B_k = \sum_{i=0}^k b_i,$ and $a_i, b_i > 0,$ $\forall i \geq 0.$ Going from continuous to the discrete time, $\alpha_t,\, h(\alpha_t),\, \alpha_t h(\alpha_t)$ translate into $A_k,\, H_k,\, B_k,$ respectively.

To characterize convergence to stationary points, we denote by $E_{k} \defeq \cc_{k} - \cc_{k-1}$ the discretization error between the iterations $k-1$ and $k$ (recall that in the continuous time domain, $\cc_t$ was conserved). As $\cc_0 = 0,$ we clearly have $\cc_k = \sum_{i=1}^{k} E_i.$ Given specific assumptions about the objective function (e.g., if it is convex or nonconvex, its degree of smoothness, etc.), we will need to argue that the total discretization error $\sum_{i=1}^{k} E_i$ is ``sufficiently small'' (possibly zero, or even negative) under those assumptions. In general, the magnitude of the discretization error will be determined by the step sizes $a_i, b_i,$ which will be one of the constraining factors determining the rate of convergence.
\paragraph{Average Function Value Decrease}
The following (algorithm-independent) lemma implies that if $\cc_k$ is non-increasing with $k$ (namely, if $E_k \leq 0,$ $\forall k$) then the average function value taken at all points constructed by the algorithm is decreasing with the iteration count $k.$ The lemma will be crucial in obtaining the results for the convergence in norm of the gradient. 
\begin{restatable}{lemma}{structural}\label{lemma:avg-fn-value-dec}
Let $\cc_k = {B_k} f(\vy_{k}) - \sum_{i=0}^k b_i f(\vy_{i}) + \sum_{i=0}^k a_i D_{\psi^*}(\vz_{k}, \vz_{i})$ and $E_k = \cc_{k} - \cc_{k-1},$ where $\forall i,$ $a_i, b_i >0$ and $\forall k,$ $A_k = \sum_{i=0}^k a_i,$ $B_k = \sum_{i=0}^k b_i$. 
Then:
\begin{align*}
\frac{1}{B_k}\sum_{i=0}^k b_i f(\vy_{i})\, = \; & f(\vy_{0}) - \sum_{i=1}^{k} \Big(\frac{1}{B_{i-1}} - \frac{1}{B_i}\Big) \sum_{j=0}^{i-1} a_j D_{\psi^*}(\vz_{i}, \vz_{j})\\
&+ \sum_{i=1}^{k}\Big(\frac{1}{B_{i-1}} - \frac{1}{B_k}\Big)E_i. 
\end{align*}
\end{restatable}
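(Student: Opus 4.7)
The plan is to recognize that the left-hand side is a running weighted average of function values, and that the claimed identity is essentially what one gets by expanding the telescoping recursion satisfied by this average.

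First I would define $\bar{f}_k \defeq \frac{1}{B_k}\sum_{i=0}^k b_i f(\vy_i)$ and $T_k \defeq \sum_{i=0}^k a_i D_{\psi^*}(\vz_k, \vz_i)$. Since $D_{\psi^*}(\vz_k,\vz_k)=0$, the $i=k$ term in $T_k$ vanishes, so $T_k = \sum_{i=0}^{k-1} a_i D_{\psi^*}(\vz_k,\vz_i)$. The elementary identity $B_k \bar f_k = B_{k-1}\bar f_{k-1} + b_k f(\vy_k)$ rearranges to
\begin{equation*}
\bar f_k - \bar f_{k-1} = \frac{b_k}{B_k}\bigl(f(\vy_k) - \bar f_{k-1}\bigr).
\end{equation*}

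Next, I would use the definition of $\cc_k$ to rewrite the factor in parentheses. Solving $\cc_k = B_k f(\vy_k) - \sum_{i=0}^k b_i f(\vy_i) + T_k$ for $\sum_{i=0}^{k-1} b_i f(\vy_i) = B_{k-1}\bar f_{k-1}$ and dividing by $B_{k-1}$ gives $\bar f_{k-1} = f(\vy_k) - \cc_k/B_{k-1} + T_k/B_{k-1}$, i.e., $f(\vy_k) - \bar f_{k-1} = (\cc_k - T_k)/B_{k-1}$. Combining with the recursion above and using the partial-fraction identity $\frac{b_k}{B_k B_{k-1}} = \frac{1}{B_{k-1}} - \frac{1}{B_k}$, I obtain
\begin{equation*}
\bar f_k - \bar f_{k-1} = \Big(\tfrac{1}{B_{k-1}} - \tfrac{1}{B_k}\Big)\cc_k - \Big(\tfrac{1}{B_{k-1}} - \tfrac{1}{B_k}\Big) T_k.
\end{equation*}

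Then I would telescope from $k=1$ to $k=K$, noting $\bar f_0 = f(\vy_0)$, which already produces the $T_k$ sum appearing on the right-hand side of the claim. What remains is to rewrite $\sum_{k=1}^K\bigl(\tfrac{1}{B_{k-1}}-\tfrac{1}{B_k}\bigr)\cc_k$ in terms of the increments $E_i$. Since $\cc_0 = B_0 f(\vy_0) - b_0 f(\vy_0) + a_0 D_{\psi^*}(\vz_0,\vz_0) = 0$, we have $\cc_k = \sum_{i=1}^k E_i$. Swapping the order of summation and telescoping the inner geometric-style sum $\sum_{k=i}^K\bigl(\tfrac{1}{B_{k-1}}-\tfrac{1}{B_k}\bigr) = \tfrac{1}{B_{i-1}} - \tfrac{1}{B_K}$ yields exactly $\sum_{i=1}^K \bigl(\tfrac{1}{B_{i-1}} - \tfrac{1}{B_K}\bigr) E_i$, matching the stated formula.

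The proof is almost purely algebraic, so I do not anticipate any conceptual obstacles. The one place that requires a small amount of care is the Abel-type reordering in the last step; one has to keep track of which summation index becomes the outer one after the swap and verify that the telescoping sum produces the coefficient $\tfrac{1}{B_{i-1}} - \tfrac{1}{B_K}$ (rather than, say, $\tfrac{1}{B_{i-1}} - \tfrac{1}{B_i}$). Everything else — the weighted-average recursion, the partial-fraction identity for $b_k/(B_k B_{k-1})$, and eliminating $f(\vy_k)$ via the definition of $\cc_k$ — is routine.
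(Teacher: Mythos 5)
Your proof is correct and is essentially the paper's argument in a different dress: the key step---eliminating $f(\vy_k)$ through the definition of $\cc_k$ together with $\cc_0 = 0$, hence $\cc_k = \sum_{i=1}^{k} E_i$---is exactly what the paper uses, and your telescoping of the averaged recursion plus the summation swap $\sum_{k=i}^{K}\bigl(\tfrac{1}{B_{k-1}}-\tfrac{1}{B_k}\bigr)=\tfrac{1}{B_{i-1}}-\tfrac{1}{B_K}$ is what the paper's induction-and-regrouping accomplishes implicitly. There are no gaps.
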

\begin{proof}
Denote $S_k = \sum_{i=0}^k b_i f(\vy_{i}).$ 
The proof is by induction on $k$. The base case is immediate, as, by the definition of $S_k$ and $B_k,$ we have $S_0 = b_0 f(\vy_{0}) = B_0 f(\vy_{0}).$  Now assume that the statement is true for $k \geq 0.$ Then, by the definition of $S_k,$ 
\begin{equation}\label{eq:S_k-change}
    S_{k+1} = S_k + b_{k+1} f(\vy_{k+1}).
\end{equation}
On the other hand, by the definitions of $\cc_k$ and $E_k,$ we have $\cc_k = \sum_{i=1}^{k}E_i,$ and, thus:
\begin{equation}\label{eq:fy-as-a-fn-of-S}
    f(\vy_{k+1}) = \frac{1}{B_{k}}\Big(S_k - \sum_{j=0}^{k}a_j D_{\psi^*}(\vz_{k+1}, \vz_{j}) + \sum_{i=1}^{k+1} E_i\Big).
\end{equation}
Combining Equations~\eqref{eq:S_k-change} and~\eqref{eq:fy-as-a-fn-of-S}:
\begin{align*}
    S_{k+1} = &\ \Big(1 + \frac{b_{k+1}}{B_k}\Big)S_k - \frac{b_{k+1}}{B_k} \sum_{j=0}^{k}a_j D_{\psi^*}(\vz_{k+1}, \vz_{j}) + \frac{b_{k+1}}{B_k}\sum_{i=1}^{k+1} E_i\\
    = &\ \frac{B_{k+1}}{B_k} S_k - B_{k+1}\sum_{j=0}^{k}\Big(\frac{1}{B_{k}} - \frac{1}{B_{k+1}}\Big)a_j D_{\psi^*}(\vz_{k+1}, \vz_{j})\\
    \ \ &+ B_{k+1}\sum_{i=1}^{k+1}\Big(\frac{1}{B_{k}} - \frac{1}{B_{k+1}}\Big)E_i,
\end{align*}
where we have used $B_{k+1} = B_k + b_{k+1}.$ Applying the inductive hypothesis and grouping terms into appropriate summations completes the proof.
\end{proof}
%
%
%
%
%
\subsection{Convergence to Stationary Points in \markupadd{Euclidean} Spaces}\label{sec:hilbert}
In this section, we take $(E, \|\cdot\|)$ to be a \markupdelete{Hilbert} \markupadd{Euclidean} space. 
The following simple claim is useful for passing from Bregman divergences to gradient norms.
\begin{claim}\label{claim:triangle-ineq}
Let $a, b > 0$. Then $a\|\vz + \Delta \vz\|^2 + b\|\vz\|^2 \geq \frac{ab}{a+b}\|\Delta \vz\|^2.$
\end{claim}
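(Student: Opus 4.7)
The plan is to prove the claim by completing the square. Since we are in a Euclidean setting, the squared norm expands via the inner product, and the inequality is equivalent to showing that a certain quadratic in $\vz$ is nonnegative. Specifically, I would rewrite the quantity
\[
a\|\vz + \Delta\vz\|^2 + b\|\vz\|^2 - \frac{ab}{a+b}\|\Delta\vz\|^2
\]
as a single squared norm. Expanding the first two terms gives
\[
(a+b)\|\vz\|^2 + 2a\innp{\vz, \Delta\vz} + a\|\Delta\vz\|^2,
\]
so after subtracting $\frac{ab}{a+b}\|\Delta\vz\|^2$ the coefficient of $\|\Delta\vz\|^2$ becomes $a - \frac{ab}{a+b} = \frac{a^2}{a+b}$. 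This is exactly the form needed to complete the square in $\vz$, yielding the identity
\[
a\|\vz + \Delta\vz\|^2 + b\|\vz\|^2 - \frac{ab}{a+b}\|\Delta\vz\|^2 = (a+b)\left\|\vz + \tfrac{a}{a+b}\Delta\vz\right\|^2.
\]
Since the right-hand side is manifestly nonnegative (using $a+b > 0$), the claim follows immediately.

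There is no real obstacle here: the whole proof reduces to an algebraic identity that I would verify by expanding the right-hand side and matching coefficients with the expansion of the left-hand side. As a sanity check, the minimum of the left-hand side over $\vz$ is attained at $\vz = -\tfrac{a}{a+b}\Delta\vz$, and plugging in confirms the bound $\tfrac{ab}{a+b}\|\Delta\vz\|^2$ is tight, which also shows the constant in the claim cannot be improved. An alternative route would be to invoke convexity of $\|\cdot\|^2$ with weights $\tfrac{a}{a+b}$ and $\tfrac{b}{a+b}$ applied to $\vz + \Delta\vz$ and $-\vz$, but the completing-the-square argument is the most direct and self-contained.
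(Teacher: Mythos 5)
Your proof is correct, and the completing-the-square identity you state checks out: expanding $(a+b)\bigl\|\vz + \tfrac{a}{a+b}\Delta\vz\bigr\|^2$ gives $(a+b)\|\vz\|^2 + 2a\innp{\vz,\Delta\vz} + \tfrac{a^2}{a+b}\|\Delta\vz\|^2$, which matches the left-hand side after the coefficient bookkeeping you describe. Your route differs slightly from the paper's: after the same expansion of $a\|\vz+\Delta\vz\|^2 + b\|\vz\|^2$, the paper bounds the cross term via Cauchy--Schwarz, $2a\innp{\vz,\Delta\vz} \geq -2a\|\vz\|\,\|\Delta\vz\|$, sets $c = \|\vz\|/\|\Delta\vz\|$ (treating $\|\Delta\vz\|=0$ separately), and then minimizes the scalar quadratic $(a+b)c^2 - 2ac + a$ at $c = \tfrac{a}{a+b}$ to recover $\tfrac{ab}{a+b}$. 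Your argument keeps the vector structure and turns the inequality into an exact identity, which buys three small advantages: no appeal to Cauchy--Schwarz, no case distinction for $\Delta\vz = \zeros$, and an immediate tightness statement (the minimizer $\vz = -\tfrac{a}{a+b}\Delta\vz$ shows the constant cannot be improved), which the paper's proof also implicitly contains but does not make explicit. Both arguments use only that $\|\cdot\|$ is induced by an inner product, which is exactly the standing assumption in the section where the claim is used.
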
 
\begin{proof}
As $\|\cdot\| = \sqrt{\innp{\cdot, \cdot}},$ we have that $a\|\vz + \Delta \vz\|^2 + b\|\vz\|^2 = (a+b)\|\vz\|^2 + 2a\innp{\vz, \Delta \vz} + a\|\Delta\vz\|^2.$ By the Cauchy-Schwarz inequality, $\innp{\vz, \Delta \vz} \leq \|\vz\|\|\Delta \vz\|.$ Since the claim holds trivially for $\|\Delta\vz\| = 0,$ assume $\|\Delta\vz\| \neq 0$ and let $c = \frac{\|\vz\|}{\|\Delta \vz\|}.$ Then $a\|\vz + \Delta \vz\|^2 + b\|\vz\|^2 \geq \|\Delta \vz\|^2((a+b)c^2 -2ac +a)$. As $(a+b)c^2 -2ac + a$ is minimized for $c = \frac{a}{a+b},$ the claim follows.
\end{proof}

To relate the Bregman divergences in the definition of $\cc_k$ to norms of the gradients, we will use the following application of Claim~\ref{claim:triangle-ineq}.
\begin{proposition}\label{prop:BD-to-grad-norms}
Let $\psi(\vx) = \frac{\mu}{2}\|\vx\|^2$ (so that $D_{\psi^*}(\vw, \vz) = \frac{1}{2\mu}\|\vw - \vz\|^2$). Then:
$$
\sum_{j=0}^{i-1} a_j D_{\psi^*}(\vz_{i}, \vz_{j}) \geq \frac{1}{2\mu}\sum_{j=1}^i \nu_j^i \Big\| \nabla f(\vx_{j}) \Big\|^2,
$$
where $\nu_i^i = \frac{{a_i}^2}{{A_i}^2}{H_i}^2\big(\frac{a_i}{2} + \frac{a_i a_{i-1}}{a_i + a_{i-1}}\big)$ and $\nu_j^i = \frac{a_{j-1}{a_j}^3}{a_{j-1} + a_j}\frac{{H_{j}}^2}{{A_{j}}^2}$ for $1\leq j \leq i-1.$
\end{proposition}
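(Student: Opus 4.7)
The plan is to reduce the Bregman divergence side to squared norms using the specific choice $\psi(\vx) = \frac{\mu}{2}\|\vx\|^2$, and then iteratively apply Claim~\ref{claim:triangle-ineq} by exploiting the dual-variable recursion. Setting $\vg_j \defeq \frac{a_j H_j}{A_j}\nabla f(\vx_j)$, the update rule from~\eqref{eq:gen-mom-method} gives $\vz_j - \vz_{j-1} = -\vg_j$, and hence the telescoping identity $\vz_i - \vz_{j-1} = (\vz_i - \vz_j) - \vg_j$ holds for every $j$. Since $D_{\psi^*}(\vw, \vz) = \frac{1}{2\mu}\|\vw - \vz\|^2$ and $\|\vg_\ell\|^2 = \frac{a_\ell^2 H_\ell^2}{A_\ell^2}\|\nabla f(\vx_\ell)\|^2$, the proposition reduces to showing
\[
\sum_{j=0}^{i-1} a_j \|\vz_i - \vz_j\|^2 \;\ge\; \sum_{\ell=1}^{i-1}\frac{a_{\ell-1}a_\ell}{a_{\ell-1}+a_\ell}\|\vg_\ell\|^2 + \Bigl(\frac{a_i}{2} + \frac{a_i a_{i-1}}{a_i + a_{i-1}}\Bigr)\|\vg_i\|^2.
\]

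The main engine is Claim~\ref{claim:triangle-ineq} applied with $\vz := \vz_i - \vz_\ell$ and $\Delta\vz := -\vg_\ell$, which for any $a, b > 0$ yields $a\|\vz_i - \vz_{\ell-1}\|^2 + b\|\vz_i - \vz_\ell\|^2 \ge \frac{ab}{a+b}\|\vg_\ell\|^2$. Accordingly, I will split each summand $T_j \defeq a_j\|\vz_i - \vz_j\|^2$ into fractional parts that feed into two neighboring pairings---one with $T_{j-1}$ (contributing to $\|\vg_j\|^2$) and one with $T_{j+1}$ (contributing to $\|\vg_{j+1}\|^2$)---choosing the fractions so that the resulting harmonic-mean coefficient $\frac{ab}{a+b}$ matches $\frac{a_{\ell-1}a_\ell}{a_{\ell-1}+a_\ell}$ for each interior index $\ell \in \{1, \ldots, i-1\}$. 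For the boundary index $\ell = i$ I use the identity $\|\vz_i - \vz_{i-1}\|^2 = \|\vg_i\|^2$ directly, letting a residual fraction of $T_{i-1}$ contribute mass to $\|\vg_i\|^2$, and combining it with what remains from a pairing feeding $\|\vg_{i-1}\|^2$ to produce the composite coefficient $\frac{a_i}{2} + \frac{a_i a_{i-1}}{a_i + a_{i-1}}$. Multiplying the extracted $\|\vg_\ell\|^2$ coefficients by $\frac{a_\ell^2 H_\ell^2}{A_\ell^2}$ converts them back into $\|\nabla f(\vx_\ell)\|^2$ and recovers the stated $\nu_\ell^i$.

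The main obstacle is the combinatorial accounting. Each interior $T_j$ participates in two distinct pairings, and a naive equal-split allocation only yields half of the target harmonic-mean coefficient; attaining the tight coefficient $\frac{a_{\ell-1}a_\ell}{a_{\ell-1}+a_\ell}$ therefore requires a carefully weighted non-uniform partition that exploits the relative magnitudes of neighboring $a_\ell$'s. Particular attention is needed at the two boundaries: $T_0$ is used entirely for the pairing with $T_1$ (there is no $\vg_0$), while $T_{i-1}$ must be split between the pairing that extracts $\|\vg_{i-1}\|^2$ and the direct substitution that feeds $\|\vg_i\|^2$, and the split must be chosen so that these two contributions add up to exactly $\tilde\nu_i^i = \frac{a_i}{2} + \frac{a_i a_{i-1}}{a_i + a_{i-1}}$.
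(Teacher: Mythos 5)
Your reduction and your choice of engine are exactly the paper's: rewrite the Bregman divergences as scaled squared norms, use $\vz_i - \vz_j = -\sum_{\ell=j+1}^{i}\vg_\ell$ with $\vg_\ell = \frac{a_\ell H_\ell}{A_\ell}\nabla f(\vx_\ell)$, and extract $\|\vg_\ell\|^2$ from consecutive pairs via Claim~\ref{claim:triangle-ineq}. The gap is in the accounting step you defer to: the ``carefully weighted non-uniform partition'' that is supposed to recover the full coefficient $\frac{a_{\ell-1}a_\ell}{a_{\ell-1}+a_\ell}$ at every interior index does not exist. The harmonic mean $\frac{ab}{a+b}$ is strictly increasing in each argument, so a pairing that uses only a fraction $\alpha\le a_{\ell-1}$ of the weight on $\|\vz_i-\vz_{\ell-1}\|^2$ and $\beta\le a_\ell$ of the weight on $\|\vz_i-\vz_\ell\|^2$ yields $\frac{\alpha\beta}{\alpha+\beta}\le\frac{a_{\ell-1}a_\ell}{a_{\ell-1}+a_\ell}$, with equality only when $\alpha=a_{\ell-1}$ and $\beta=a_\ell$; since every interior weight must be shared between two pairings, you cannot be at full weight on both sides, so a strict loss is unavoidable. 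Worse, the target with the literal constants is not merely out of reach of this route, it can fail outright: for $i=1$ the left-hand side equals $a_0\|\vg_1\|^2$ exactly, while the claimed right-hand side is $(\frac{a_1}{2}+\frac{a_1a_0}{a_1+a_0})\|\vg_1\|^2$, which is strictly larger whenever $a_1>a_0$ --- and the step-size sequences in this paper are increasing (Corollary~\ref{cor:gmd-convex} even invokes $a_j\ge a_{j-1}$). So no split of the available budget can produce the stated $\nu_i^i$, and chasing those exact constants is the wrong goal.

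What the paper actually does is the crude version of your plan: it writes $\sum_{j=0}^{i-1}a_jD_{\psi^*}(\vz_i,\vz_j)$ as $\frac{a_0}{2}D_{\psi^*}(\vz_i,\vz_0)+\frac{a_{i-1}}{2}D_{\psi^*}(\vz_i,\vz_{i-1})+\frac{1}{2}\sum_{j=0}^{i-2}(a_jD_{\psi^*}(\vz_i,\vz_j)+a_{j+1}D_{\psi^*}(\vz_i,\vz_{j+1}))$, applies Claim~\ref{claim:triangle-ineq} at full weight inside each half-weighted pair, keeps the exact boundary term $\frac{a_{i-1}}{2}D_{\psi^*}(\vz_i,\vz_{i-1})=\frac{a_{i-1}}{2}\cdot\frac{1}{2\mu}\frac{a_i^2H_i^2}{A_i^2}\|\nabla f(\vx_i)\|^2$, and drops $\frac{a_0}{2}D_{\psi^*}(\vz_i,\vz_0)\ge 0$. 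This delivers interior coefficients $\frac{1}{2}\cdot\frac{a_{j-1}a_j^3}{a_{j-1}+a_j}\frac{H_j^2}{A_j^2}$ and last coefficient $\frac{a_{i-1}}{2}\frac{a_i^2H_i^2}{A_i^2}$, i.e., the stated $\nu_j^i$ only up to constant factors (the constants in the proposition as printed do not match what the argument, or any argument, can give); that weaker form is all that is used downstream, where only $\nu_j^i=\Omega(a_{j-1}\frac{a_j^2H_j^2}{A_j^2})$ matters. You should either prove this half-weight version and note the constant-factor slack, or you will remain stuck at the non-uniform-partition step, which cannot be completed as described.
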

\begin{proof}
By the choice of function $\psi$ and by $\|\cdot\|$ being induced by an inner product, we have that $D_{\psi^*}(\vw, \vz) = \frac{1}{2\mu}\|\vw - \vz\|^2.$ By the definition of $\vz_{k},$ we have that, $\forall i > j\geq 0$, $D_{\psi^*}(\vz_{i}, \vz_{j}) = \frac{1}{2\mu}\|\sum_{k = j+1}^{i} \frac{a_k}{A_k}H_k \nabla f(\vx_{k})\|^2.$ By Claim~\ref{claim:triangle-ineq}, $\forall i > j+1 >0:$
\begin{equation}\label{eq:pairs-of-breg-div}
    a_j D_{\psi^*}(\vz_{i}, \vz_{j}) + a_{j+1} D_{\psi^*}(\vz_{i}, \vz_{j+1}) \geq \frac{1}{2\mu}\frac{a_j a_{j+1}}{a_j + a_{j+1}}\Big\| \frac{a_{j+1}}{A_{j+1}}H_{j+1}\nabla f(\vx_{j+1}) \Big\|^2.
\end{equation}
Write $\sum_{j=0}^{i-1} a_j D_{\psi^*}(\vz_{i}, \vz_{j})$ as:
\begin{align*}
\sum_{j=0}^{i-1} a_j D_{\psi^*}(\vz_{i}, \vz_{j}) =&\ \frac{a_0}{2}D_{\psi^*}(\vz_{i}, \vz_{0})+ \frac{a_{i-1}}{2}D_{\psi^*}(\vz_{i}, \vz_{i-1})\\
&
+ \frac{1}{2}\sum_{j=0}^{i-2}\Big(a_j D_{\psi^*}(\vz_{i}, \vz_{j}) + a_{j+1} D_{\psi^*}(\vz_{i}, \vz_{j+1})\Big). 
\end{align*}
Combine the last equation with Eq.~\eqref{eq:pairs-of-breg-div} and $\frac{a_0}{2}D_{\psi^*}(\vz_{i}, \vz_{0}) \geq 0$. 
\end{proof}

\paragraph{Discrete-Time Methods}
The particular two-step discretization (reminiscent of a predic\-tor-corrector method) that we consider for general momentum dynamics~\eqref{eq:ct-mom-dyn} is:
\begin{equation}\tag{GMD}
    \begin{gathered}\label{eq:gen-mom-method}
        \vx_{k} = \frac{A_{k-1}}{A_{k}}\vy_{k-1} + \frac{a_k}{A_k}\nabla\psi^*(\vz_{k-1}),\\
        \vz_{k} = \vz_{k-1} - \frac{a_k}{A_k}H_k \nabla f(\vx_{k}),\\
        \vy_{k} = \vx_{k} + \frac{a_k}{A_k}\big(\nabla\psi^*(\vz_{k}) - \nabla\psi^*(\vz_{k-1})\big).
    \end{gathered}
\end{equation}
When $H_k = A_k,$ \eqref{eq:gen-mom-method} is equivalent to~\eqref{eq:gen-mom-method-f}, AGD+~\cite{cohen2018acceleration}, and the method of similar triangles~\cite{gasnikov2018universal}, which generalize Nesterov's accelerated method~\cite{Nesterov1983} and accelerated extra-gradient method~\cite{AXGD}. When $H_k = 1,$ \eqref{eq:gen-mom-method} is a slightly different discretization of the generalized heavy-ball dynamics than~\eqref{eq:gen-mom-method-f} with $H_k = 1$, where the difference lies only in the size of the extrapolation step $\vx_{k}.$ Working with the general momentum method~\eqref{eq:gen-mom-method} will allow us to obtain results for AGD+ and the generalized heavy-ball method as special cases, and it will also allow us to understand how the different choices of $H_k$ affect the convergence in norm of the gradient.
%
%
\paragraph{Discretization Error}
Characterization of the discretization error is what crucially determines the convergence of the methods in norm of the gradient, as well as in function value. Here, we show how the discretization error is affected by the choice of the step size and assumptions about the  objective function, such as smoothness and convexity.

\begin{lemma}\label{lemma:discretization-error}
Let $E_k \defeq \cc_{k} - \cc_{k-1},$ where $\cc_k$ was defined in~\eqref{eq:discr-cq}, with $a_i, b_i > 0,$ $\forall i$, $A_k = \sum_{i=0}^k a_i$, $B_k = \sum_{i=0}^k b_i,$ and $B_{k-1} = H_k A_{k-1}$. Let $\vx_{k}, \vy_{k}, \vz_{k}$ evolve according to~\eqref{eq:gen-mom-method}, where $\vx_{0} = \vy_{0}$ is an arbitrary initial point such that $\nabla\psi^*(\vz_{0}) = \vx_{0}$, $\psi: E \rightarrow \mathbb{R}$ is a $\mu$-strongly convex function w.r.t.~$\|\cdot\|$, and $f$ is an $L$-smooth function w.r.t.~$\|\cdot\|$.  If $f$ is $\epsilon_H$-weakly convex for $\epsilon_H \in [0, L]$ and $\frac{{a_k}^2}{{A_k}^2} = \frac{c\mu}{L H_k}$ for $c \in [0, 1],$ then 
$$
E_k \leq - (1-c)A_{k-1}D_{\psi^*}(\vz_{k-1}, \vz_{k}) + B_{k-1}\frac{\epsilon_H}{2}\|\vx_{k} - \vy_{k-1}\|^2.
$$
\end{lemma}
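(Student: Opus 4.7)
The plan is to bound $E_k$ by expanding it from its definition, applying $L$-smoothness and $\epsilon_H$-weak convexity of $f$ at the reference point $\vx_k$ to control the function-value increment, and using the three-point identity for Bregman divergences to rewrite the Bregman-sum increment. Concretely, after using $B_k - b_k = B_{k-1}$ and $D_{\psi^*}(\vz_k,\vz_k) = 0$ to write
\[
E_k = B_{k-1}\bigl(f(\vy_k) - f(\vy_{k-1})\bigr) + \sum_{i=0}^{k-1} a_i\bigl[D_{\psi^*}(\vz_k,\vz_i) - D_{\psi^*}(\vz_{k-1},\vz_i)\bigr],
\]
$L$-smoothness evaluated at $(\vx_k,\vy_k)$ together with $\epsilon_H$-weak convexity evaluated at $(\vx_k,\vy_{k-1})$ gives
\[
f(\vy_k) - f(\vy_{k-1}) \leq \innp{\nabla f(\vx_k), \vy_k - \vy_{k-1}} + \tfrac{L}{2}\|\vy_k - \vx_k\|^2 + \tfrac{\epsilon_H}{2}\|\vx_k - \vy_{k-1}\|^2,
\]
while Fact~\ref{fact:bregman-properties}(i) applied with $(\vu,\vw,\vv) = (\vz_k,\vz_{k-1},\vz_i)$, followed by summation with weights $a_i$, yields
\[
\sum_{i=0}^{k-1}a_i\bigl[D_{\psi^*}(\vz_k,\vz_i) - D_{\psi^*}(\vz_{k-1},\vz_i)\bigr] = A_{k-1}D_{\psi^*}(\vz_k,\vz_{k-1}) + \innp{A_{k-1}\nabla\psi^*(\vz_{k-1}) - \textstyle\sum_{i=0}^{k-1}a_i\nabla\psi^*(\vz_i),\, \vz_k - \vz_{k-1}}.
\]

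The key algebraic step is a short induction on $j \geq 0$ using the (GMD) updates to establish the identity $A_j\vy_j = \sum_{i=0}^{j} a_i \nabla\psi^*(\vz_i)$; the base case uses the initialization $\vy_0 = \nabla\psi^*(\vz_0)$, and the inductive step combines the expressions for $\vx_k$ and $\vy_k$ in (GMD) with $A_k = A_{k-1} + a_k$. Applied with $j = k-1$, this identity collapses the linear remainder above into $A_{k-1}\innp{\nabla\psi^*(\vz_{k-1}) - \vy_{k-1},\, \vz_k - \vz_{k-1}}$. A short manipulation combining the three (GMD) equations also gives the compact form $\vy_k - \vy_{k-1} = \tfrac{a_k}{A_k}(\nabla\psi^*(\vz_k) - \vy_{k-1})$, which together with $\nabla f(\vx_k) = -\tfrac{A_k}{a_k H_k}(\vz_k - \vz_{k-1})$ and $B_{k-1} = H_k A_{k-1}$ rewrites
\[
B_{k-1}\innp{\nabla f(\vx_k),\vy_k - \vy_{k-1}} = -A_{k-1}\innp{\vz_k - \vz_{k-1},\, \nabla\psi^*(\vz_k) - \nabla\psi^*(\vz_{k-1})} - A_{k-1}\innp{\vz_k - \vz_{k-1},\, \nabla\psi^*(\vz_{k-1}) - \vy_{k-1}}.
\]
The second inner product here is the exact negative of the linear remainder produced by the Bregman expansion, so the two cancel.

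What remains from the Bregman side is $-A_{k-1}\innp{\vz_k - \vz_{k-1},\, \nabla\psi^*(\vz_k) - \nabla\psi^*(\vz_{k-1})} + A_{k-1}D_{\psi^*}(\vz_k,\vz_{k-1})$; using the symmetric identity $\innp{\nabla\psi^*(\vu) - \nabla\psi^*(\vv),\, \vu - \vv} = D_{\psi^*}(\vu,\vv) + D_{\psi^*}(\vv,\vu)$, this collapses to $-A_{k-1}D_{\psi^*}(\vz_{k-1},\vz_k)$. For the smoothness term, I would use $\vy_k - \vx_k = \tfrac{a_k}{A_k}(\nabla\psi^*(\vz_k) - \nabla\psi^*(\vz_{k-1}))$ and Fact~\ref{fact:bregman-properties}(ii) to bound $\|\nabla\psi^*(\vz_k) - \nabla\psi^*(\vz_{k-1})\|^2 \leq \tfrac{2}{\mu}D_{\psi^*}(\vz_{k-1},\vz_k)$, whereupon the step-size condition $\tfrac{a_k^2}{A_k^2} = \tfrac{c\mu}{LH_k}$ combined with $B_{k-1} = H_k A_{k-1}$ gives $B_{k-1}\tfrac{L}{2}\|\vy_k - \vx_k\|^2 \leq cA_{k-1}D_{\psi^*}(\vz_{k-1},\vz_k)$. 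Adding the contributions yields $E_k \leq -(1-c)A_{k-1}D_{\psi^*}(\vz_{k-1},\vz_k) + B_{k-1}\tfrac{\epsilon_H}{2}\|\vx_k - \vy_{k-1}\|^2$, as claimed. The main obstacle I expect is recognizing the identity $A_j\vy_j = \sum_{i=0}^j a_i\nabla\psi^*(\vz_i)$: without it, the inner product produced by the three-point identity does not cancel against the gradient cross term, and the resulting expression cannot be reduced to a multiple of a single Bregman divergence in the correct direction.
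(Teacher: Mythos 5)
Your proposal is correct and follows essentially the same route as the paper's proof: the same split of $E_k$ into the function-value increment (handled via $L$-smoothness plus $\epsilon_H$-weak convexity at $\vx_k$) and the weighted Bregman increment (handled via the three-point identity), the same key identity $A_j\vy_j=\sum_{i=0}^{j}a_i\nabla\psi^*(\vz_i)$, and the same use of Fact~\ref{fact:bregman-properties}(ii) with $\frac{{a_k}^2}{{A_k}^2}=\frac{c\mu}{LH_k}$ and $B_{k-1}=H_kA_{k-1}$ to absorb the smoothness term. The only (cosmetic) difference is that you pivot the three-point identity around $\vz_{k-1}$ and then convert $D_{\psi^*}(\vz_k,\vz_{k-1})$ via the symmetric Bregman identity, whereas the paper pivots around $\vz_k$, obtains $-A_{k-1}D_{\psi^*}(\vz_{k-1},\vz_k)$ directly, and checks that the combined coefficient of $\nabla f(\vx_k)$ vanishes; your variant is in fact the bookkeeping used in the paper's non-Euclidean analogue (Lemma~\ref{lemma:discretization-error-banach}).
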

\begin{proof}
By the definitions of $E_k$ and $\cc_k$ and $D_{\psi^*}(\vz, \vz)=0,$ we have:
\begin{align}
    E_{k} = B_{k-1}\left[f(\vy_{k}) - f(\vy_{k-1})\right] + \sum_{i=0}^{k-1}a_i\left[D_{\psi^*}(\vz_{k}, \vz_{i}) - D_{\psi^*}(\vz_{k-1}, \vz_{i})\right]. \label{eq:discr-err-1}
\end{align}
As $f$ is $L$-smooth and $\epsilon_H$-weakly convex, we have:
\begin{align}\small
    f(\vy_{k}) - f(\vy_{k-1}) =&\ f(\vy_{k}) - f(\vx_{k}) + f(\vx_{k}) - f(\vy_{k-1})\notag\\
    \leq& \innp{\nabla f(\vx_{k}), \vy_{k} - \vy_{k-1}} + \frac{L}{2}\|\vy_{k} - \vx_{k}\|^2 + \frac{\epsilon_H}{2}\|\vx_{k} - \vy_{k-1}\|^2\notag\\
    =& \innp{\nabla f(\vx_{k}), \vy_{k} - \vy_{k-1}} + \frac{L}{2}\frac{{a_k}^2}{{A_k}^2}\|\nabla\psi^*(\vz_{k}) - \nabla\psi^*(\vz_{k-1})\|^2 \notag\\
    &+\frac{\epsilon_H}{2}\|\vx_{k} - \vy_{k-1}\|^2.\label{eq:discr-err-3}
\end{align}\normalsize

By the first part of Fact~\ref{fact:bregman-properties} and the definition of $\vz_{k}$, $\forall i \leq k-1$:
\begin{align*}
    D_{\psi^*}(\vz_{k-1}, \vz_{i}) =&\; D_{\psi^*}(\vz_{k}, \vz_{i}) + \innp{\nabla\psi^*(\vz_{k}) - \nabla\psi^*(\vz_{i}), \vz_{k-1} - \vz_{k}} + D_{\psi^*}(\vz_{k-1}, \vz_{k})\\
    =&\; D_{\psi^*}(\vz_{k}, \vz_{i}) + \frac{a_k}{A_k}H_k \innp{\nabla f(\vx_{k}), \nabla\psi^*(\vz_{k}) - \nabla\psi^*(\vz_{i})}\\
    &+ D_{\psi^*}(\vz_{k-1}, \vz_{k}).
\end{align*}
Hence, we have:
\begin{align}
    \sum_{i=0}^{k-1} a_i & \left[D_{\psi^*}(\vz_{k}, \vz_{i}) -  D_{\psi^*}(\vz_{k-1}, \vz_{i})\right]\notag\\
    =& -A_{k-1}D_{\psi^*}(\vz_{k-1}, \vz_{k})
    - \frac{a_k}{A_k}H_k \bigg\langle{\nabla f(\vx_{k}), A_{k-1}\nabla\psi^*(\vz_{k}) - \sum_{i=0}^{k-1}\nabla\psi^*(\vz_{i})}\bigg\rangle\notag\\
    =&  -A_{k-1}D_{\psi^*}(\vz_{k-1}, \vz_{k})
    - {a_k}{H_k}\bigg\langle{\nabla f(\vx_{k}), \nabla\psi^*(\vz_{k}) - \frac{1}{A_k}\sum_{i=0}^{k}\nabla\psi^*(\vz_{i})}\bigg\rangle\notag\\
    =& -A_{k-1}D_{\psi^*}(\vz_{k-1}, \vz_{k}) - {a_k}{H_k}\innp{\nabla f(\vx_{k}), \nabla\psi^*(\vz_{k}) -\vy_{k}}, \label{eq:discr-err-2}
\end{align}
where the last equality is by $\vy_{k} = \frac{1}{A_k}\sum_{i=0}^k a_i \nabla\psi^*(\vz_{i}),$ which follows by applying the definition of $\vy_{k}$ from~\eqref{eq:gen-mom-method} recursively and using $\vy_{0} = \nabla\psi^*(\vz_{0}).$

By Fact~\ref{fact:bregman-properties}, $D_{\psi^*}(\vz_{k-1}, \vz_{k})\geq \frac{\mu}{2} \|\nabla\psi^*(\vz_{k}) - \nabla\psi^*(\vz_{k-1})\|^2$. Hence, combining Eqs.~\eqref{eq:discr-err-1}--\eqref{eq:discr-err-2}:
\begin{align*}
    E_{k} \leq &\; B_{k-1}\Big\langle{\nabla f(\vx_{k}), \vy_{k} - \vy_{k-1} - \frac{a_k H_k}{B_{k-1}}(\nabla\psi^*(\vz_{k}) -\vy_{k}) }\Big\rangle +\frac{\epsilon_H B_{k-1}}{2}\|\vx_{k} - \vy_{k-1}\|^2\\
    &+ \Big(B_{k-1}\frac{L}{2}\frac{{a_k}^2}{{A_k}^2} - \frac{c\mu}{2}A_{k-1}\Big)\|\nabla\psi^*(\vz_{k}) - \nabla\psi^*(\vz_{k-1})\|^2\\
    &- (1-c)A_{k-1}D_{\psi^*}(\vz_{k-1}, \vz_{k})\\
    \leq&\; \frac{\epsilon_H B_{k-1}}{2}\|\vx_{k} - \vy_{k-1}\|^2 - (1-c)A_{k-1}D_{\psi^*}(\vz_{k-1}, \vz_{k}),
\end{align*}
where we have used $B_{k-1} = H_k A_{k-1},$ $\vy_{k} = \frac{A_{k-1}}{A_k}\vy_{k-1} + \frac{a_k}{A_{k-1}}\nabla\psi^*(\vz_{k})$ (which implies $\vy_{k} - \vy_{k-1} - \frac{a_k H_k}{B_{k-1}}(\nabla\psi^*(\vz_{k}) -\vy_{k}) = 0$), and $\frac{{a_k}^2}{{A_k}^2} = c \cdot \frac{\mu}{L H_k}.$
\end{proof}

\paragraph{Final Convergence Bound}

To be able to bound the non-negative term in the discretization error $E_k$ (which comes from $\epsilon_H$-weak convexity), we will make use of the following proposition.
\begin{proposition}\label{prop:ncvx-discr-err}
Let $\vx_{k}, \vy_{k}, \vz_{k}$ evolve according to~\eqref{eq:gen-mom-method}, for $\vx_{0} = \vy_{0} = \nabla\psi^*(\vz_{0})$ and $\mu$-strongly convex $\psi.$ Then:
$$
\frac{1}{2}\|\vx_{k} - \vy_{k-1}\|^2 \leq \frac{1}{\mu}\frac{{a_k}^2}{{A_k}^2 A_{k-1}}\sum_{i=0}^{k-2}a_i D_{\psi^*}(\vz_{k-1}, \vz_{i}). 
$$
\end{proposition}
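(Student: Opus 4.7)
The plan is to start by unpacking the structure of $\vx_k - \vy_{k-1}$ using the definitions in~\eqref{eq:gen-mom-method}. The first equation of~\eqref{eq:gen-mom-method} gives directly
$$
\vx_k - \vy_{k-1} = \frac{a_k}{A_k}\bigl(\nabla\psi^*(\vz_{k-1}) - \vy_{k-1}\bigr),
$$
so the real work is to express $\nabla\psi^*(\vz_{k-1}) - \vy_{k-1}$ as a weighted average of differences of the form $\nabla\psi^*(\vz_{k-1}) - \nabla\psi^*(\vz_i)$ for $i \leq k-2$.

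The next step is to use the identity $\vy_{k-1} = \tfrac{1}{A_{k-1}}\sum_{i=0}^{k-1} a_i \nabla\psi^*(\vz_i)$, which was established in the proof of Lemma~\ref{lemma:discretization-error} by unrolling the recursion for $\vy_k$ from~\eqref{eq:gen-mom-method} using the initialization $\vy_0 = \nabla\psi^*(\vz_0)$. Substituting this in and noting that the $i=k-1$ term vanishes, I obtain
$$
\vx_k - \vy_{k-1} = \frac{a_k}{A_k A_{k-1}}\sum_{i=0}^{k-2} a_i \bigl(\nabla\psi^*(\vz_{k-1}) - \nabla\psi^*(\vz_i)\bigr).
$$

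From here the finish is short. Applying Jensen's inequality to the convex function $\|\cdot\|^2$ with weights $a_i/A_{k-2}$ (where $A_{k-2} = \sum_{i=0}^{k-2} a_i$) yields
$$
\Bigl\|\sum_{i=0}^{k-2} a_i (\nabla\psi^*(\vz_{k-1}) - \nabla\psi^*(\vz_i))\Bigr\|^2 \leq A_{k-2}\sum_{i=0}^{k-2} a_i \|\nabla\psi^*(\vz_{k-1}) - \nabla\psi^*(\vz_i)\|^2.
$$
Then I invoke Fact~\ref{fact:bregman-properties}(ii), which gives $\|\nabla\psi^*(\vz_{k-1}) - \nabla\psi^*(\vz_i)\|^2 \leq \tfrac{2}{\mu} D_{\psi^*}(\vz_{k-1}, \vz_i)$ by strong convexity of $\psi$. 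Combining these and using the trivial bound $A_{k-2} \leq A_{k-1}$ (since $a_{k-1} > 0$) absorbs the extra $A_{k-2}$ factor and produces exactly the claimed inequality.

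There is no real obstacle here; the only subtlety is remembering to drop the vanishing $i=k-1$ term so that the weighted-average representation is over the correct index set, and then spending one $A_{k-2}/A_{k-1} \leq 1$ to match the normalization in the stated bound.
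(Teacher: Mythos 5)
Your proposal is correct and follows essentially the same route as the paper's proof: unroll $\vy_{k-1}=\frac{1}{A_{k-1}}\sum_{i=0}^{k-1}a_i\nabla\psi^*(\vz_i)$, write $\vx_k-\vy_{k-1}$ as a weighted combination of $\nabla\psi^*(\vz_{k-1})-\nabla\psi^*(\vz_i)$, apply Jensen's inequality, and finish with Fact~\ref{fact:bregman-properties}(ii). The only cosmetic difference is that the paper keeps the vanishing $i=k-1$ term and applies Jensen with weights $a_i/A_{k-1}$ directly, whereas you drop it first and spend the extra factor via $A_{k-2}\leq A_{k-1}$; both yield the identical bound.
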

\begin{proof}
 By applying the definition of $\vy_{k}$ recursively in~\eqref{eq:gen-mom-method}, we have that $\vy_{k} = \frac{1}{A_k}\sum_{i=0}^k a_i \nabla\psi^*(\vz_{i}),$ $\forall k \geq 0.$ Further, by the definition of $\vx_{k}$ in~\eqref{eq:gen-mom-method}, we have $\vx_{k} = \frac{A_{k-1}}{A_k}\vy_{k-1} + \frac{a_k}{A_k}\nabla\psi^*(\vz_{k-1}),$ and it follows that:
 \begin{align*}
     \vx_{k} - \vy_{k-1} &= \frac{a_k}{A_k}\big(\nabla\psi^*(\vz_{k-1}) - \vy_{k-1}\big)\\
     &= \frac{a_k}{A_k A_{k-1}}\sum_{i=0}^{k-1}a_i \big(\nabla\psi^*(\vz_{k-1}) - \nabla\psi^*(\vz_{i})\big).
 \end{align*}
 Applying Jensen's inequality:
 \begin{equation*}
     \|\vx_{k} - \vy_{k-1}\|^2 \leq \frac{{a_k}^2}{{A_k}^2A_{k-1}} \sum_{i=0}^{k-1}a_i\|\nabla\psi^*(\vz_{k-1}) - \nabla\psi^*(\vz_{i})\|^2.
 \end{equation*}
 The rest of the proof follows by using (by Fact~\ref{fact:bregman-properties}), $D_{\psi^*}(\vz, \vw) \geq \frac{\mu}{2}\|\nabla\psi^*(\vz) - \nabla\psi^*(\vw)\|^2,$ $\forall \vz, \vw.$ 
\end{proof}

Using Lemmas ~\ref{lemma:avg-fn-value-dec} and~\ref{lemma:discretization-error}, we now show how to bound the minimum norm of the gradient, under suitable step sizes, so that Lemma~\ref{lemma:discretization-error} applies. 

\begin{theorem}\label{thm:min-grad-norm}
Let $\vx_{k},\, \vy_{k},\, \vz_{k}$ evolve according to~\eqref{eq:gen-mom-method}, for arbitrary $\vx_{0} = \vy_{0}\in \mathbb{R}^n$ such that $\vx_{0} = \nabla\psi^*(\vz_{0}),$ where $\psi(\vx) = \frac{\mu}{2}\|\vx\|^2.$ Let $\frac{{a_i}^2}{{A_i}^2} = c \frac{\mu}{L H_i}$ for some $c \in [0, 1]$ and $B_{k-1} = A_{k-1}H_k.$ If, for some $c' \in [0, 1]$ and all $i \leq k$: 
$
(1-c')\big(\frac{1}{B_{i-1}}- \frac{1}{B_i}\big) \geq \frac{c\epsilon_H}{L}\big(\frac{1}{B_i} - \frac{1}{B_k}\big),
$
then:
\begin{align*}
   & c' \sum_{i=1}^k \bigg[\frac{1}{B_{i-1}} - \frac{1}{B_i}\bigg]\sum_{j=0}^{i-1}  a_j D_{\psi^*}(\vz_{i}, \vz_{j}) \; +\;   \frac{c(1-c)}{2L}\sum_{i=1}^{k} \Big(1 - \frac{B_{i-1}}{B_k}\Big) \|\nabla f(\vx_i)\|^2\\
    &\hspace{1.5in} \leq f(\vx_0) - f(\vx^*).
\end{align*}
\end{theorem}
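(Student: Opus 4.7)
The starting point is the identity from Lemma~\ref{lemma:avg-fn-value-dec} applied to $\cc_k$. Since $\vx^*$ is the minimizer, the weighted average satisfies $\frac{1}{B_k}\sum_{i=0}^k b_i f(\vy_i) \ge f(\vx^*)$. Combined with $\vx_0 = \vy_0$, this yields, after rearrangement,
\begin{equation*}
    \sum_{i=1}^{k}\Big(\frac{1}{B_{i-1}} - \frac{1}{B_i}\Big)\sum_{j=0}^{i-1} a_j D_{\psi^*}(\vz_i, \vz_j) - \sum_{i=1}^{k}\Big(\frac{1}{B_{i-1}} - \frac{1}{B_k}\Big) E_i \leq f(\vx_0) - f(\vx^*).
\end{equation*}
The task is thus to lower bound $-E_i$ in a way that produces the gradient-norm term and at worst shrinks the first (already desirable) Bregman sum by a factor of $c'$.

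\textbf{Bounding $-E_i$.} By Lemma~\ref{lemma:discretization-error} and the step-size choice $\frac{{a_i}^2}{{A_i}^2} = \frac{c\mu}{L H_i}$, the first (negative) piece of $E_i$ contributes $(1-c) A_{i-1} D_{\psi^*}(\vz_{i-1}, \vz_i)$ to $-E_i$. In the Euclidean setting with $\psi(\vx) = \frac{\mu}{2}\|\vx\|^2$, the recursion $\vz_i = \vz_{i-1} - \frac{a_i}{A_i} H_i \nabla f(\vx_i)$ gives $D_{\psi^*}(\vz_{i-1}, \vz_i) = \frac{1}{2\mu}\frac{{a_i}^2}{{A_i}^2} {H_i}^2 \|\nabla f(\vx_i)\|^2$, and using $B_{i-1} = A_{i-1} H_i$ together with the step-size identity collapses this to $A_{i-1}D_{\psi^*}(\vz_{i-1}, \vz_i) = \frac{c B_{i-1}}{2L}\|\nabla f(\vx_i)\|^2$. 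Summed against $(\frac{1}{B_{i-1}} - \frac{1}{B_k})$, this is exactly the second term in the target inequality. For the nonnegative piece $B_{i-1}\frac{\epsilon_H}{2}\|\vx_i - \vy_{i-1}\|^2$, I invoke Proposition~\ref{prop:ncvx-discr-err}; the factor $B_{i-1}\frac{{a_i}^2}{{A_i}^2 A_{i-1}}$ collapses to $\frac{c\mu}{L}$, yielding
\begin{equation*}
    B_{i-1}\tfrac{\epsilon_H}{2}\|\vx_i - \vy_{i-1}\|^2 \;\le\; \tfrac{c \epsilon_H}{L}\sum_{j=0}^{i-1} a_j D_{\psi^*}(\vz_{i-1}, \vz_j),
\end{equation*}
where I used $a_{i-1} D_{\psi^*}(\vz_{i-1}, \vz_{i-1}) = 0$ to extend the sum to $j = i-1$.

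\textbf{Reindexing and the step-size condition.} The resulting nonnegative contribution has the form $\sum_{i=1}^{k}(\frac{1}{B_{i-1}} - \frac{1}{B_k})\frac{c\epsilon_H}{L}\sum_{j=0}^{i-1} a_j D_{\psi^*}(\vz_{i-1}, \vz_j)$. Shifting the outer index by one (and observing that the shifted $i=0$ and $i=k$ boundary terms both vanish) rewrites this as $\sum_{i=1}^{k}(\frac{1}{B_i} - \frac{1}{B_k})\frac{c\epsilon_H}{L}\sum_{j=0}^{i-1} a_j D_{\psi^*}(\vz_i, \vz_j)$, which is now in the same form as the ``good'' Bregman sum. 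The coefficient of $\sum_{j=0}^{i-1} a_j D_{\psi^*}(\vz_i, \vz_j)$ becomes
\begin{equation*}
    \Big(\tfrac{1}{B_{i-1}} - \tfrac{1}{B_i}\Big) - \tfrac{c\epsilon_H}{L}\Big(\tfrac{1}{B_i} - \tfrac{1}{B_k}\Big) \;\ge\; c'\Big(\tfrac{1}{B_{i-1}} - \tfrac{1}{B_i}\Big),
\end{equation*}
by the hypothesis on step sizes. Combining everything gives the claimed inequality.

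\textbf{Anticipated obstacle.} The only delicate part is the bookkeeping around the reindexing: Lemma~\ref{lemma:discretization-error} naturally produces Bregman divergences based at $\vz_{i-1}$, whereas the statement of the theorem is in terms of divergences based at $\vz_i$. A clean shift of the outer summation index (exploiting that the endpoint corrections vanish) is what makes the step-size hypothesis directly applicable. Everything else is a short chain of substitutions using the prescribed step-size identities $\frac{{a_i}^2}{{A_i}^2} = \frac{c\mu}{L H_i}$ and $B_{i-1} = A_{i-1}H_i$.
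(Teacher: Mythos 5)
Your proposal is correct and follows essentially the same route as the paper's proof: start from the identity of Lemma~\ref{lemma:avg-fn-value-dec}, bound $E_i$ via Lemma~\ref{lemma:discretization-error} together with Proposition~\ref{prop:ncvx-discr-err}, substitute the step-size identities $\frac{{a_i}^2}{{A_i}^2}=\frac{c\mu}{LH_i}$ and $B_{i-1}=A_{i-1}H_i$ to turn $A_{i-1}D_{\psi^*}(\vz_{i-1},\vz_i)$ into $\frac{cB_{i-1}}{2L}\|\nabla f(\vx_i)\|^2$, reindex the weakly-convex term so it matches the $\vz_i$-based Bregman sum, and absorb it using the hypothesis on $c'$. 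The reindexing you flag as the delicate step is exactly the (implicit) shift the paper performs, and your boundary-term observations are correct.
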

\begin{proof}
Using Lemma~\ref{lemma:avg-fn-value-dec}, we have:
\begin{equation}\label{eq:main-identity}
\begin{aligned}
  &\sum_{i=1}^{k} \Big(\frac{1}{B_{i-1}} - \frac{1}{B_i}\Big) \sum_{j=0}^{i-1} a_j D_{\psi^*}(\vz_{i}, \vz_{j}) - \sum_{i=1}^{k}\Big(\frac{1}{B_{i-1}} - \frac{1}{B_k}\Big)E_i\\
  &\hspace{1in}= f(\vy_{0}) - \frac{1}{B_k}\sum_{i=0}^k b_i f(\vy_{i}) \leq f(\vx_0) - f(\vx^*),
\end{aligned}
\end{equation}
as $\vy_0 = \vx_0$ and $\vx^*$ minimizes $f.$ 

To prove the theorem, it suffices to bound from below the left-hand side of Eq.~\eqref{eq:main-identity}. 
Let us first bound the discretization error. Using Proposition~\ref{prop:ncvx-discr-err}, we have that, $\forall i:$
\begin{equation}\notag
\begin{aligned}
    E_i &\leq \frac{\epsilon_H B_{i-1}}{\mu}\frac{{a_{i}}^2}{{A_{i}}^2 A_{i-1}}\sum_{j=0}^{i-2}a_j D_{\psi^*}(\vz_{i-1}, \vz_{j}) - (1-c)A_{i-1}D_{\psi^*}(\vz_{i-1}, \vz_{i})\\
    &= c \frac{\epsilon_H}{L}\sum_{j=0}^{i-2}a_j D_{\psi^*}(\vz_{i-1}, \vz_{j}) - (1-c)A_{i-1}D_{\psi^*}(\vz_{i-1}, \vz_{i}).
\end{aligned}
\end{equation}
Therefore:
\begin{equation}
\begin{aligned}
    \sum_{i=1}^{k}\Big(\frac{1}{B_{i-1}} - \frac{1}{B_k}\Big)E_i \leq &\ c \frac{\epsilon_H}{L}\sum_{i=2}^{k}\Big(\frac{1}{B_{i-1}} - \frac{1}{B_k}\Big)\sum_{j=0}^{i-2}a_j D_{\psi^*}(\vz_{i-1}, \vz_{j}) \\
    &-  (1-c) \sum_{i=1}^{k}\Big(\frac{1}{B_{i-1}} - \frac{1}{B_k}\Big) A_{i-1}D_{\psi^*}(\vz_{i-1}, \vz_{i}).
\end{aligned}
\end{equation}
As $D_{\psi^*}(\vz_{i-1}\, \vz_i) = \frac{1}{2\mu}\|\vz_i - \vz_{i-1}\|^2 = \frac{1}{2\mu} \frac{{a_i}^2}{{A_i}^2}{H_i}^2\|\nabla f(\vx_i)\|^2,$ we further have:
\begin{equation}\label{eq:linear-in-k-term}
    \begin{aligned}
        \sum_{i=1}^{k}\Big(\frac{1}{B_{i-1}} - \frac{1}{B_k}\Big) A_{i-1}D_{\psi^*}(\vz_{i-1}, \vz_{i}) &=  \sum_{i=1}^{k}\Big(\frac{1}{B_{i-1}} - \frac{1}{B_k}\Big) A_{i-1}\frac{1}{2\mu} \frac{{a_i}^2}{{A_i}^2}{H_i}^2\|\nabla f(\vx_i)\|^2\\
        &= \frac{c}{2L} \sum_{i=1}^{k} \Big(1 - \frac{B_{i-1}}{B_k}\Big) \|\nabla f(\vx_i)\|^2,
    \end{aligned}
\end{equation}
where we have used $B_{i-1} = A_{i-1}H_i$ and $\frac{{a_i}^2}{{A_i}^2} = \frac{c}{H_i}\frac{\mu}{L},$ both from the statement of the theorem.

Combining Eqs.~\eqref{eq:main-identity}--\eqref{eq:linear-in-k-term}, we have:
\begin{align*}
    f(\vx_0) - f(\vx^*) \geq &\; \sum_{i=1}^k \bigg[\frac{1}{B_{i-1}} - \frac{1}{B_i} - \frac{c \epsilon_H}{L}\Big(\frac{1}{B_i} - \frac{1}{B_k}\Big)\bigg]\sum_{j=0}^{i-1} a_j D_{\psi^*}(\vz_{i}, \vz_{j})\\
    &+  \frac{c(1-c)}{2L}\sum_{i=1}^{k} \Big(1 - \frac{B_{i-1}}{B_k}\Big) \|\nabla f(\vx_i)\|^2.
\end{align*}
To complete the proof, it remains to use $(1-c')\big(\frac{1}{B_{i-1}}- \frac{1}{B_i}\big) \geq \frac{c\epsilon_H}{L}\big(\frac{1}{B_i} - \frac{1}{B_k}\big)$.
\end{proof}

To obtain useful convergence bounds, we need to show that it is possible to satisfy the assumptions of Theorem~\ref{thm:min-grad-norm}. We start by providing examples for the case of convex objectives, and then discuss the nonconvex case.

\paragraph{The Convex Case}\label{sec:cvx-opt}

When $f$ is convex, $\epsilon_H = 0,$ and Theorem~\ref{thm:min-grad-norm} can be applied with $c' = 0.$ Further, once $c \in [0, 1],$ $\mu,$ and $H_i$ are specified, all other parameters are set, since $a_i$ and $A_i$ can be computed from $\frac{{a_i}^2}{{A_i}^2} = \frac{c\mu}{H_i L}$ and $A_k = \sum_{i=0}^k a_i,$ and, finally, we have that $B_i = A_{i-1}H_i.$ The only restriction that Theorem~\ref{thm:min-grad-norm} imposes is that $B_i$ is a non-decreasing sequence.

To illustrate the results, we take $H_i = {A_i}^{\lambda},$ for $\lambda \in [0, 2].$ As mentioned before, $\lambda = 1$ corresponds to~AGD+~\cite{cohen2018acceleration} and $\lambda = 0$ corresponds to a generalization of the heavy-ball method. 
For this choice of $H_i,$ we have that $\frac{{a_i}^2}{{A_i}^2} = \frac{c\mu}{{A_i}^{\lambda}L},$ or, equivalently: 
$
\frac{{a_i}^2}{{A_i}^{2-\lambda}} = c\frac{\mu}{L}.
$ 
It is not hard to verify (using the asymptotic formula $\sum_{i=1}^k i^p = \frac{k^{p+1}}{p+1} + \frac{k^p}{2} + O(k^{p-1})$) that a sequence $\{a_i\}_i$ that satisfies this condition will grow as:
\begin{equation}\label{eq:ai-growth}
a_i \propto \begin{cases}
\big(\frac{c\mu}{L}\big)^{1/\lambda}i^{(2-\lambda)/\lambda}, \quad &\text{if } \lambda > 0 \text{ and } i = \Omega(\frac{1}{\lambda}),\\
\sqrt{\frac{c\mu}{L}}\big(1 - \sqrt{\frac{c\mu}{L}}\big)^{-(i-1)}, \quad &\text{if } \lambda = 0.
\end{cases}
\end{equation}
The following corollary (of Theorem~\ref{thm:min-grad-norm}) shows that any generalized momentum method~\eqref{eq:gen-mom-method} with $H_i = {A_i}^{\lambda},$ $\lambda \in [0, 2],$ converges to a point with small gradient norm at rate $1/k.$

\begin{restatable}{corollary}{hilbertcvx}\label{cor:gmd-convex}
Let $\vx_k,\, \vy_k,\, \vz_k$ evolve as in~\eqref{eq:gen-mom-method}, for convex f,  $\psi^*(\vz) = \frac{1}{2\mu}\|\vz\|^2,$ $0<\mu < L,$ and  $\frac{{a_i}^2}{{A_i}^{2-\lambda}} = c\frac{\mu}{L}$ for some $\lambda \in [0, 2]$, $c\in (0, 1],$ and $c\frac{\mu}{L} < 1$. Then, $\forall k \geq 1$:
$$
\min_{1 \leq i \leq k} \|\nabla f(\vx_i)\|^2 =  O\Big(\frac{L(f(\vx_0) - f(\vx^*))}{c(1-c)k + c \min\{\log k/\lambda, \, k\sqrt{c\mu/L}\} }\Big).
$$
In particular, for $c = \frac{1}{2}:$ 
$
\min_{1 \leq i \leq k} \|\nabla f(\vx_i)\|^2 =  O(\frac{L(f(\vx_0) - f(\vx^*))}{k}).
$
\end{restatable}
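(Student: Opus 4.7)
The plan is to invoke Theorem~\ref{thm:min-grad-norm} with the choice $c'=1$, which is admissible because $f$ is convex (so $\epsilon_H=0$) and the hypothesis $(1-c')(\tfrac{1}{B_{i-1}}-\tfrac{1}{B_i})\geq (c\epsilon_H/L)(\tfrac{1}{B_i}-\tfrac{1}{B_k})$ collapses to $0\geq 0$. The theorem then gives
\begin{equation*}
\sum_{i=1}^k\Big(\tfrac{1}{B_{i-1}}-\tfrac{1}{B_i}\Big)\sum_{j=0}^{i-1}a_j D_{\psi^*}(\vz_i,\vz_j) \;+\; \tfrac{c(1-c)}{2L}\sum_{i=1}^k\Big(1-\tfrac{B_{i-1}}{B_k}\Big)\|\nabla f(\vx_i)\|^2 \leq f(\vx_0)-f(\vx^*),
\end{equation*}
and the strategy is to factor $\min_{1\leq i\leq k}\|\nabla f(\vx_i)\|^2$ out of both summations and lower bound the resulting coefficient sums by explicit functions of $k,\lambda,c,\mu,L$.

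For the second, gradient-norm summation, I would verify that $\sum_{i=1}^k(1-B_{i-1}/B_k)=\Theta(k)$. Substituting $B_{i-1}=A_{i-1}H_i=A_{i-1}A_i^{\lambda}$ and using the asymptotic growth of $A_i$ implied by $a_i^2/A_i^{2-\lambda}=c\mu/L$---namely $A_i\propto i^{2/\lambda}$ for $\lambda\in(0,2]$ and the geometric growth $A_i\propto(1-\sqrt{c\mu/L})^{-i}$ for $\lambda=0$, both recorded in~\eqref{eq:ai-growth}---this is a direct calculation. It contributes the $c(1-c)k$ term to the effective denominator of the bound on $\min_i\|\nabla f(\vx_i)\|^2$.

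For the first, Bregman-divergence summation, I would apply Proposition~\ref{prop:BD-to-grad-norms} to obtain $\sum_{j=0}^{i-1}a_j D_{\psi^*}(\vz_i,\vz_j)\geq \tfrac{1}{2\mu}\sum_{j=1}^i\nu_j^i\|\nabla f(\vx_j)\|^2$ and then exchange the order of summation. Since $\nu_j^i$ depends only on $j$ when $j<i$, the coefficient of $\|\nabla f(\vx_j)\|^2$ telescopes to $\nu_j(\tfrac{1}{B_j}-\tfrac{1}{B_k})$ plus a boundary contribution. Plugging in the explicit formula for $\nu_j$ and using $a_j^2/A_j^2=c\mu/(LH_j)$ reduces the dominant piece, up to absolute constants, to $\tfrac{c}{L}\cdot\tfrac{a_j}{A_j}$. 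Here $a_j/A_j=\sqrt{c\mu/L}\cdot A_j^{-\lambda/2}$, so for $\lambda\in(0,2]$ and $j=\Omega(1/\lambda)$ the growth rates from the proof of Theorem~\ref{thm:conv-fun-val-d} give $a_j/A_j=\Theta(1/(\lambda j))$ (absorbing the $\lambda^{\Theta(1)}$ factors into $O(\cdot)$), so the telescoping sum contributes $\Theta(c\log k/(L\lambda))$; for $\lambda=0$ one has $a_j/A_j=\sqrt{c\mu/L}$ constantly, yielding a contribution $\Theta(ck\sqrt{c\mu/L}/L)$. Writing both as the common lower bound $c\min\{\log k/\lambda,\,k\sqrt{c\mu/L}\}$ gives the corresponding piece of the denominator, and dividing $f(\vx_0)-f(\vx^*)$ by the full denominator proves the general bound; the $c=1/2$ specialization is then immediate since $c(1-c)=1/4$ makes the gradient-norm piece dominant.

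The main technical subtlety lies in controlling the residual $-\tfrac{1}{B_k}$ inside $(\tfrac{1}{B_j}-\tfrac{1}{B_k})$ so that the $\log k/\lambda$ (or $k\sqrt{c\mu/L}$) asymptotic survives over the bulk of the summation range. This follows from the same polynomial/geometric growth of $B_j$ used above, together with splitting off the $O(1/\lambda)$ burn-in range of $j$ in the polynomial case, whose contribution is $O(1)$ and absorbs into the $O$-constant.
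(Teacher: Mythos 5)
Your proposal is correct and follows essentially the same route as the paper's own proof: apply Theorem~\ref{thm:min-grad-norm} with $c'=1$ (admissible since $\epsilon_H=0$), bound $\sum_{i=1}^k(1-B_{i-1}/B_k)=\Theta(k)$ via the growth rates in~\eqref{eq:ai-growth}, and lower bound the Bregman sum through Proposition~\ref{prop:BD-to-grad-norms} followed by an exchange of summation and telescoping of $\frac{1}{B_{i-1}}-\frac{1}{B_i}$, yielding the $\Omega(\min\{\log k/\lambda,\,k\sqrt{c\mu/L}\})$ term. The minor index slip (the telescoped factor is $\frac{1}{B_{j-1}}-\frac{1}{B_k}$) and the absorbed $\lambda^{\Theta(1)}$ constants match the paper's own level of precision and do not affect the argument.
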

\begin{proof}
Applying Theorem~\ref{thm:min-grad-norm}, we have:
\begin{equation}\label{eq:gen-conv-bnd}
\begin{aligned}
 f(\vx_0) - f(\vx^*) \geq &\; 
\sum_{i=1}^k \bigg[\frac{1}{B_{i-1}} - \frac{1}{B_i}\bigg]\sum_{j=0}^{i-1} a_j D_{\psi^*}(\vz_{i}, \vz_{j})\\
&+   \frac{c(1-c)}{2L}\sum_{i=1}^{k} \Big(1 - \frac{B_{i-1}}{B_k}\Big) \|\nabla f(\vx_i)\|^2.
\end{aligned}
\end{equation}
Consider first the case $\lambda \in (0, 2].$ Then, from Eq.~\eqref{eq:ai-growth}, we have $a_i \propto \big(\frac{c\mu}{L}\big)^{1/\lambda}i^{(2-\lambda)/\lambda}$ and $A_i \propto \frac{\lambda}{2}\big(\frac{c\mu}{L}\big)^{1/\lambda}i^{2/\lambda}$ for $i = \Omega(\frac{1}{\lambda})$, in which case also 
$$
B_i = \Omega\Big(\lambda^{1+\lambda} \big(\frac{c\mu}{L}\big)^{(1+\lambda)/\lambda}\, i^{\frac{2(1+\lambda)}{\lambda}  }\Big).
$$ 
When $\lambda = 0,$ we have:
$
B_i = A_{i-1} \propto \Big(1 - \sqrt{\frac{c\mu}{L}}\Big)^{-(i-1)}.
$ 
In either case:
$$
\frac{a_{j-1}}{A_{j-1}} = \Omega\Big( \min\Big\{\sqrt{\frac{c\mu}{L}},\,\frac{1}{\lambda j}\Big\}\Big).
$$

As $B_i$ grows as a  function of $i$ at least cubically, we have that: 
$$
\frac{c(1-c)}{2L}\sum_{i=1}^{k} \Big(1 - \frac{B_{i-1}}{B_k}\Big) = \frac{c(1-c)}{2L}\Theta(k).
$$
It remains to bound $\sum_{i=1}^k \big[\frac{1}{B_{i-1}} - \frac{1}{B_i}\big]\sum_{j=0}^{i-1} a_j D_{\psi^*}(\vz_{i}, \vz_{j}).$ By Proposition~\ref{prop:BD-to-grad-norms} and $a_j \geq a_{j-1}$, 
\begin{align*}
    \sum_{j=0}^{i-1} a_j D_{\psi^*}(\vz_{i}, \vz_{j}) &\geq \frac{1}{2\mu}\sum_{j=1}^i \frac{a_{j-1}}{2}\frac{{a_j}^2}{{A_j}^2}{H_j}^2 \|\nabla f(\vx_j)\|^2 
    = \frac{c}{4L} \sum_{j=1}^i \frac{a_{j-1}}{A_{j-1}} B_{j-1} \|\nabla f(\vx_j)\|^2.
\end{align*}

As $\frac{a_{j-1}}{A_{j-1}}=\Omega( \min\{\sqrt{\frac{c\mu}{L}},\,\frac{1}{\lambda j}\}),$ we have that:
\begin{align*}
 &\sum_{i=1}^k \bigg[\frac{1}{B_{i-1}} - \frac{1}{B_i}\bigg]\sum_{j=0}^{i-1} a_j D_{\psi^*}(\vz_{i}, \vz_{j})\\
 &\hspace{.7in}= \Omega\Big(\frac{c}{ L}\Big)\min_{1\leq i\leq k}\|\nabla f(\vx_i)\|^2   \sum_{i=1}^k \bigg[\frac{1}{B_{i-1}} - \frac{1}{B_i}\bigg] \sum_{j=1}^i \min\Big\{\sqrt{\frac{c\mu}{L}}, \, \frac{1}{\lambda j}\Big\} B_{j-1}.
\end{align*}
Finally, observe that:
\begin{align*}
    \sum_{i=1}^k \bigg[\frac{1}{B_{i-1}} - \frac{1}{B_i}\bigg] \sum_{j=1}^i \min & \Big\{\sqrt{\frac{c\mu}{L}}, \, \frac{1}{\lambda j}\Big\} B_{j-1}\\
    &= \sum_{j=1}^k \Omega\Big(\min\Big\{\sqrt{\frac{c\mu}{L}}, \, \frac{1}{\lambda j}\Big\}\Big) B_{j-1}\sum_{i=j}^k \bigg[\frac{1}{B_{i-1}} - \frac{1}{B_i}\bigg]\\
    &= \sum_{j=1}^k \Omega\Big(\min\Big\{\sqrt{\frac{c\mu}{L}}, \, \frac{1}{\lambda j} \Big\}\Big)\bigg[1 - \frac{B_{j-1}}{B_k}\bigg]\\
    &=\Omega\Big(\min\big\{\log(k)/\lambda,\, k \sqrt{{c\mu}/{L}}\big\}\Big).
\end{align*}
Hence, we obtain:
$$
\min_{1 \leq i \leq k}\|\nabla f(\vx_i)\|^2  = O\Big(\frac{L(f(\vx_0) - f(\vx^*))}{c(1-c)k + c \min\{\log k/\lambda, \, k\sqrt{c\mu/L}\} }\Big),
$$
as claimed.
\end{proof}
A few remarks are in order here. It is not hard to see that for an~\emph{arbitrary} positive sequence of numbers $a_i,\, H_i,\, B_i$ that satisfy $\frac{{a_i}^2}{{A_i}^2} = c\frac{\mu}{L H_i}$, $A_i = \sum_{j=0}^i a_j,$ and $B_{i-1} = A_{i-1}H_i$, it is not possible to get better than a $1/k$ rate for convergence to stationary points, as long as Proposition~\ref{prop:BD-to-grad-norms} is used. This rate is known to be suboptimal---the optimal rate for smooth convex functions is $1/k^2$ and it is achieved by the OGM-G algorithm from~\cite{kim2018optimizing}. The rate $1/k$ for the generalized momentum methods is not surprising, and in the case of $\lambda = 1$ ($H_i = A_i,$ in which case the method is essentially equivalent to Nesterov's accelerated method in Euclidean spaces), this rate is known to be tight~\cite{kim2018generalizing}. We expect that the same is true for an arbitrary (but fixed) value of $\lambda,$ though this may be possible to show only numerically~\cite{kim2018generalizing}.

%
%
\paragraph{The Nonconvex Case}

The main restriction for obtaining the results in the nonconvex case is ensuring that:
$$
(1-c')\Big(\frac{1}{B_{i-1}}- \frac{1}{B_i}\Big) \geq \frac{c\epsilon_H}{L}\Big(\frac{1}{B_i} - \frac{1}{B_k}\Big),
$$
for some $c, c'.$ Let us first study what kind of a constraint on the sequence $B_i$ such a condition imposes. Rearranging the terms in the last expression, we have that:
\begin{equation}\label{eq:condition-c'}
\frac{B_i}{B_{i-1}} \geq 1 + \frac{c\epsilon_H}{(1-c')L}\Big(1 - \frac{B_i}{B_k}\Big).
\end{equation}
Since the last expression needs to be satisfied for all $i,$ when $B_i$ grows polynomially with $i,$ it is not hard to verify that to ensure $c' \geq 0,$ we would need to have $\frac{c \epsilon_H}{L} = O(\frac{1}{i}),$ $\forall i$; that is, for a fixed number of iterations $k,$ we would need $\frac{c \epsilon_H}{L} = O(\frac{1}{k})$.  This leads to uninformative convergence bounds, unless $\epsilon_H = O(L/k)$ (in which case one can show that $\min_{1\leq i \leq k}\|\nabla f(\vx_i)\|^2 = O(\frac{L(f(\vx_0) - f(\vx^*))}{c(1-c)k})$).

When $\epsilon_H = \omega(L/k),$ we are unable to show the convergence rate of $1/k$ for polynomially growing sequences $a_i$ (and, consequently, polynomially growing $A_i,$ $B_i$). Instead, we can only show such a convergence rate for constant $H_i.$ In particular, let us choose $\mu$ and $H_i$ so that $\frac{\mu}{L H_i} = 1.$ Then $\frac{a_i}{A_i} = \sqrt{c},$ and it follows that $\frac{B_i}{B_{i-1}} = (1-\sqrt{c})^{-1}.$ As $\epsilon_H \leq L$ and $1- \frac{B_i}{B_k}\leq 1,$ to satisfy the condition from Eq.~\eqref{eq:condition-c'}, it suffices to have $\sqrt{c} \geq \frac{c}{1-c'}.$ Equivalently, Eq.~\eqref{eq:condition-c'} is satisfied with: 
$$
c' \leq 1- \sqrt{c}.
$$
By the same arguments as in the proof of Corollary~\ref{cor:gmd-convex}, we immediately have:
\begin{restatable}{corollary}{hilbertncvx}\label{cor:gmd-nonconvex}
Let $\vx_k,\, \vy_k,\, \vz_k$ evolve according to~\eqref{eq:gen-mom-method}, for $\psi^*(\vz) = \frac{1}{2\mu}\|\vz\|^2,$ $\mu > 0,$ $\frac{\mu }{L H_i}=1$,  $\frac{{a_i}^2}{{A_i}^{2}} = c$, and $c\in (0, 1)$. Then, $\forall k\geq 1$:
$$
\min_{1 \leq i \leq k} \|\nabla f(\vx_i)\|^2 =  O\Big(\frac{L(f(\vx_0) - f(\vx^*))}{c(1-c)k + \sqrt{c}(1-\sqrt{c}) c k}\Big).
$$
In particular, for $c = \frac{1}{2}$: 
$
\min_{1 \leq i \leq k} \|\nabla f(\vx_i)\|^2 =  O\big(\frac{L(f(\vx_0) - f(\vx^*))}{k}\big).
$
\end{restatable}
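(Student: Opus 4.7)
The plan is to derive Corollary~\ref{cor:gmd-nonconvex} as a direct consequence of Theorem~\ref{thm:min-grad-norm}, largely paralleling the proof of Corollary~\ref{cor:gmd-convex} but with the key difference that $H_i$ is now taken to be constant (rather than polynomially growing) so as to accommodate the nonconvex term $\epsilon_H = L$. First I would observe that the assumption $\mu/(LH_i) = 1$ forces $H_i \equiv \mu/L$, and combined with $\frac{a_i^2}{A_i^2} = c$ this yields $\frac{a_i}{A_i} = \sqrt{c}$ and hence the geometric recursion $A_i/A_{i-1} = 1/(1-\sqrt{c})$. Consequently $B_{i-1} = A_{i-1}H_i = \frac{\mu}{L}A_{i-1}$ also grows geometrically with ratio $1/(1-\sqrt{c})$, so $1/B_{i-1} - 1/B_i = \sqrt{c}/B_i$.

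Next I would verify the hypothesis of Theorem~\ref{thm:min-grad-norm}. Since $f$ is $L$-smooth it is $L$-weakly convex, so we take $\epsilon_H = L$. The required condition $(1-c')(1/B_{i-1} - 1/B_i) \ge (c\epsilon_H/L)(1/B_i - 1/B_k)$ becomes $(1-c')\sqrt{c}/B_i \ge c(1/B_i - 1/B_k)$, which is implied by $(1-c')\sqrt{c} \ge c$, i.e., by choosing $c' = 1 - \sqrt{c}$, exactly as foreshadowed in the discussion preceding the statement.

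With this choice of $c'$ in hand, Theorem~\ref{thm:min-grad-norm} yields
\begin{equation*}
f(\vx_0) - f(\vx^*) \ge (1-\sqrt{c})\sum_{i=1}^k\Bigl[\tfrac{1}{B_{i-1}} - \tfrac{1}{B_i}\Bigr]\sum_{j=0}^{i-1}a_j D_{\psi^*}(\vz_i,\vz_j) + \tfrac{c(1-c)}{2L}\sum_{i=1}^k\Bigl(1 - \tfrac{B_{i-1}}{B_k}\Bigr)\|\nabla f(\vx_i)\|^2.
\end{equation*}
The second sum is straightforward: since $B_{i-1}/B_k = (1-\sqrt{c})^{k-i+1}$, the tail $\sum_{i=1}^k B_{i-1}/B_k$ is bounded by a constant, so $\sum_{i=1}^k (1 - B_{i-1}/B_k) = \Theta(k)$ when $c$ is bounded away from $0$ and $1$, giving a contribution of order $\frac{c(1-c)}{L}\,k \cdot \min_i\|\nabla f(\vx_i)\|^2$.

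The main bookkeeping — and what I expect to be the only mildly technical step — is to bound the first sum via the same swap-of-sums trick used in the proof of Corollary~\ref{cor:gmd-convex}. Applying Proposition~\ref{prop:BD-to-grad-norms} gives $\sum_{j=0}^{i-1}a_jD_{\psi^*}(\vz_i,\vz_j) \ge \frac{c}{4L}\sum_{j=1}^i \frac{a_{j-1}}{A_{j-1}}B_{j-1}\|\nabla f(\vx_j)\|^2$, and substituting $a_{j-1}/A_{j-1} = \sqrt{c}$ and $1/B_{i-1} - 1/B_i = \sqrt{c}/B_i$ yields (after interchanging the order of summation and simplifying)
\begin{equation*}
\sum_{i=1}^k\Bigl[\tfrac{1}{B_{i-1}} - \tfrac{1}{B_i}\Bigr]\sum_{j=0}^{i-1}a_j D_{\psi^*}(\vz_i,\vz_j) \;\ge\; \tfrac{c\sqrt{c}}{4L}\sum_{j=1}^k \Bigl(1 - \tfrac{B_{j-1}}{B_k}\Bigr)\|\nabla f(\vx_j)\|^2,
\end{equation*}
which is again $\Theta(k)\cdot \frac{c\sqrt{c}}{L}\min_i\|\nabla f(\vx_i)\|^2$. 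Multiplying by the prefactor $(1-c') = \sqrt{c}$ produces the $\sqrt{c}(1-\sqrt{c})\cdot c\cdot k$ contribution in the denominator (after rearrangement, the additional $(1-\sqrt{c})$ appears from the constant absorbed by $\Theta(k)$ when $c$ is near $1$). Adding the two contributions and rearranging for $\min_i\|\nabla f(\vx_i)\|^2$ yields the claimed $O\!\bigl(L(f(\vx_0)-f(\vx^*))/[c(1-c)k + \sqrt{c}(1-\sqrt{c})ck]\bigr)$ bound, which for $c = 1/2$ gives the $O(L(f(\vx_0)-f(\vx^*))/k)$ rate.
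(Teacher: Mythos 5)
Your proposal is correct and follows essentially the paper's own route: verify the hypothesis of Theorem~\ref{thm:min-grad-norm} with $\epsilon_H=L$ and $c'=1-\sqrt{c}$ (exactly as in the discussion preceding the corollary), then repeat the argument of Corollary~\ref{cor:gmd-convex} with the geometric sequences $A_i,B_i$ and Proposition~\ref{prop:BD-to-grad-norms}. Two harmless slips: in fact $\frac{1}{B_{i-1}}-\frac{1}{B_i}=\frac{\sqrt{c}}{B_{i-1}}\ge\frac{\sqrt{c}}{B_i}$ (so your verification still goes through), and in the last sentence the prefactor on the first sum is $c'=1-\sqrt{c}$ (the $\sqrt{c}$ factor coming from $a_{j-1}/A_{j-1}$), which is what your displayed inequalities already use and is what yields the $\sqrt{c}(1-\sqrt{c})ck$ term.
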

\subsection{Convergence to Stationary Points in  \markupadd{non-Euclidean} Spaces}\label{sec:banach}

We now show that it is possible to obtain results for convergence to stationary points even in \markupdelete{Banach} \markupadd{non-Euclidean} spaces $(E, \|\cdot\|)$. 
We are only able to show such a result, however, for a different discretization of~\eqref{eq:ct-mom-dyn}. To obtain the result, we require only that $\psi(\cdot)$ is $\mu$-strongly convex with respect to the norm $\|\cdot\|.$ By Fact~\ref{fact:smoothness-sc-duality}, this implies that $\psi^*$ is $\frac{1}{\mu}$-smooth with respect to the dual norm $\|\cdot\|_*.$ 

The alternative discretization uses a gradient descent step for $\vy_k$ to ensure a decrease in $\cc_k$ that depends on $\|\nabla f(\vy_k)\|_*^2.$ However, to ensure the right change in $\cc_k$ over iterations $k,$ such a choice of $\vy_k$ requires changes to the extrapolation step $\vx_k.$ In particular, the discrete-time algorithm is:
\begin{equation}\label{eq:gen-mom-method-banach}\tag{GMD\textsubscript{B}}
    \begin{gathered}
        \vx_k = \vy_{k-1} + \frac{a_k}{A_k} \nabla \psi^*(\vz_{k-1}) - \frac{a_k}{A_k A_{k-1}}\sum_{i=0}^{k-1}a_i \nabla \psi^*(\vz_i),\\
        \vz_k = \vz_{k-1} - \frac{a_k}{A_k}H_k \nabla f(\vx_k),\\
        \vy_k = \argmin_{\vu} \Big\{\innp{\nabla f(\vx_k), \vu - \vx_k} + \frac{L}{2}\|\vu - \vx_k\|^2\Big\}.
    \end{gathered}
\end{equation}
Note that in \markupdelete{Hilbert} \markupadd{Euclidean} spaces, when $\psi^*(\vz) = \frac{\mu}{2}\|\vz\|^2,$ \eqref{eq:gen-mom-method-banach} is equivalent to~\eqref{eq:gen-mom-method}. Hence, \eqref{eq:gen-mom-method-banach} can be seen as a generalization of~\eqref{eq:gen-mom-method} to \markupdelete{Banach} \markupadd{non-Euclidean} spaces.

\paragraph{Discretization Error} 
As in the previous section, we start by bounding the discretization error $E_k \defeq \cc_k - \cc_{k-1}.$ 

\begin{lemma}\label{lemma:discretization-error-banach}
Let $E_k \defeq \cc_{k} - \cc_{k-1},$ where $\cc_k$ was defined in~\eqref{eq:discr-cq}, with $a_i, b_i > 0,$ $\forall i$, $A_k = \sum_{i=0}^k a_i$, $B_k = \sum_{i=0}^k b_i,$ and $B_{k-1} = H_k A_{k-1}$. Let $\vx_{k},\, \vy_{k},\, \vz_{k}$ evolve according to~\eqref{eq:gen-mom-method-banach}, where $\vx_{0} = \vy_{0}\in E$ is an arbitrary initial point such that $\nabla\psi^*(\vz_{0}) = \vx_{0}$, $\psi: E \rightarrow \mathbb{R}$ is a $\mu$-strongly convex function w.r.t.~$\|\cdot\|$, and $f$ is an $L$-smooth function w.r.t.~$\|\cdot\|$.  If $f$ is $\epsilon_H$-weakly convex for $\epsilon_H \in [0, L]$ and $\frac{{a_k}^2}{{A_k}^2} =  \frac{c\mu}{L H_k}$ for $c \in [0, 1],$ then 
$$
E_k \leq - (1-c)\frac{B_{k-1}}{2L} \|\nabla f(\vx_k)\|_*^2 + B_{k-1}\frac{\epsilon_H}{2}\|\vx_{k} - \vy_{k-1}\|^2.
$$
\end{lemma}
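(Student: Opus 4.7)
The plan is to follow closely the structure of the proof of Lemma~\ref{lemma:discretization-error}, but exploit the new definitions of $\vx_k$ and $\vy_k$ in \eqref{eq:gen-mom-method-banach} to extract a $\|\nabla f(\vx_k)\|_*^2$ term directly (via the gradient step for $\vy_k$) rather than via the chain that previously went through $\|\nabla\psi^*(\vz_k)-\nabla\psi^*(\vz_{k-1})\|^2$.

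First, I will split
$E_k = B_{k-1}[f(\vy_k)-f(\vy_{k-1})] + \sum_{i=0}^{k-1} a_i[D_{\psi^*}(\vz_k,\vz_i) - D_{\psi^*}(\vz_{k-1},\vz_i)]$.
For the function-value piece, I write $f(\vy_k)-f(\vy_{k-1}) = [f(\vy_k)-f(\vx_k)] + [f(\vx_k)-f(\vy_{k-1})]$. Since $\vy_k$ is the exact minimizer of the quadratic upper bound $f(\vx_k)+\innp{\nabla f(\vx_k),\vu-\vx_k}+\tfrac{L}{2}\|\vu-\vx_k\|^2$ and $f$ is $L$-smooth, standard arguments give $f(\vy_k)-f(\vx_k) \le -\tfrac{1}{2L}\|\nabla f(\vx_k)\|_*^2$. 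The $\epsilon_H$-weak convexity of $f$ yields $f(\vx_k)-f(\vy_{k-1}) \le \innp{\nabla f(\vx_k),\vx_k-\vy_{k-1}} + \tfrac{\epsilon_H}{2}\|\vx_k-\vy_{k-1}\|^2$.

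Next, for the Bregman piece, using the three-point identity (Fact~\ref{fact:bregman-properties}(i)) and $\vz_{k-1}-\vz_k = \tfrac{a_k H_k}{A_k}\nabla f(\vx_k)$ exactly as in Lemma~\ref{lemma:discretization-error}, I obtain
$\sum_{i=0}^{k-1} a_i[D_{\psi^*}(\vz_k,\vz_i)-D_{\psi^*}(\vz_{k-1},\vz_i)] = -A_{k-1}D_{\psi^*}(\vz_{k-1},\vz_k) - \tfrac{a_k H_k}{A_k}\innp{\nabla f(\vx_k),\, A_{k-1}\nabla\psi^*(\vz_k) - \sum_{i=0}^{k-1}a_i\nabla\psi^*(\vz_i)}$.
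The key algebraic observation—and the main obstacle—is the cancellation that justifies the particular form of $\vx_k$ in \eqref{eq:gen-mom-method-banach}. Substituting the definition of $\vx_k$, one computes $B_{k-1}(\vx_k-\vy_{k-1}) = \tfrac{a_k H_k A_{k-1}}{A_k}\nabla\psi^*(\vz_{k-1}) - \tfrac{a_k H_k}{A_k}\sum_{i=0}^{k-1}a_i\nabla\psi^*(\vz_i)$, so adding $B_{k-1}\innp{\nabla f(\vx_k),\vx_k-\vy_{k-1}}$ to the bracketed inner-product term collapses the $\sum_{i=0}^{k-1}a_i\nabla\psi^*(\vz_i)$ contributions and leaves only $\tfrac{a_k H_k A_{k-1}}{A_k}\innp{\nabla f(\vx_k),\, \nabla\psi^*(\vz_{k-1})-\nabla\psi^*(\vz_k)}$.

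Finally, I bound this residual cross-term against the $-A_{k-1}D_{\psi^*}(\vz_{k-1},\vz_k)$ reservoir using Young's inequality with parameter $\alpha = \mu A_{k-1}$:
$\tfrac{a_k H_k A_{k-1}}{A_k}\innp{\nabla f(\vx_k),\nabla\psi^*(\vz_{k-1})-\nabla\psi^*(\vz_k)} \le \tfrac{A_{k-1}a_k^2 H_k^2}{2\mu A_k^2}\|\nabla f(\vx_k)\|_*^2 + \tfrac{\mu A_{k-1}}{2}\|\nabla\psi^*(\vz_{k-1})-\nabla\psi^*(\vz_k)\|^2$.
By Fact~\ref{fact:bregman-properties}(ii), the second summand is absorbed by $A_{k-1}D_{\psi^*}(\vz_{k-1},\vz_k)$. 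Plugging the step-size condition $\tfrac{a_k^2}{A_k^2} = \tfrac{c\mu}{LH_k}$ into the first summand converts it to $\tfrac{cB_{k-1}}{2L}\|\nabla f(\vx_k)\|_*^2$. Combining with the earlier $-\tfrac{B_{k-1}}{2L}\|\nabla f(\vx_k)\|_*^2$ from the gradient step yields $-(1-c)\tfrac{B_{k-1}}{2L}\|\nabla f(\vx_k)\|_*^2$, and the $\tfrac{\epsilon_H}{2}\|\vx_k-\vy_{k-1}\|^2$ term carries through unchanged, completing the proof.
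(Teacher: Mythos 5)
Your proposal is correct, and its skeleton coincides with the paper's proof: the same split of $E_k$ into the function-value part and the Bregman part, the same use of the gradient step to get $f(\vy_k)-f(\vx_k)\leq -\tfrac{1}{2L}\|\nabla f(\vx_k)\|_*^2$, and the same use of $\epsilon_H$-weak convexity for $f(\vx_k)-f(\vy_{k-1})$. The only place where you deviate is the bookkeeping of the Bregman sum. The paper applies the three-point identity with pivot $\vz_{k-1}$, i.e.\ it writes $D_{\psi^*}(\vz_k,\vz_i)=D_{\psi^*}(\vz_{k-1},\vz_i)+\innp{\nabla\psi^*(\vz_{k-1})-\nabla\psi^*(\vz_i),\vz_k-\vz_{k-1}}+D_{\psi^*}(\vz_k,\vz_{k-1})$; with this choice the inner-product term equals $-B_{k-1}\innp{\nabla f(\vx_k),\vx_k-\vy_{k-1}}$ \emph{exactly} (by the definition of $\vx_k$ and $B_{k-1}=H_kA_{k-1}$), so no residual cross-term remains, and the leftover $A_{k-1}D_{\psi^*}(\vz_k,\vz_{k-1})$ is bounded by the $\tfrac1\mu$-smoothness of $\psi^*$ (Fact~\ref{fact:smoothness-sc-duality}), giving $\tfrac{cB_{k-1}}{2L}\|\nabla f(\vx_k)\|_*^2$ directly. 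You instead pivot at $\vz_k$, as in Lemma~\ref{lemma:discretization-error}, which leaves the residual $\tfrac{a_kH_kA_{k-1}}{A_k}\innp{\nabla f(\vx_k),\nabla\psi^*(\vz_{k-1})-\nabla\psi^*(\vz_k)}$; your treatment of it via Cauchy--Schwarz/Young with parameter $\mu A_{k-1}$, absorbing the squared-norm piece into $-A_{k-1}D_{\psi^*}(\vz_{k-1},\vz_k)$ through Fact~\ref{fact:bregman-properties}(ii), is valid and yields exactly the same constant $\tfrac{cB_{k-1}}{2L}$ and hence the same final bound. The trade-off is minor: the paper's pivot choice avoids the Young step and is a line shorter, while your version stays literally parallel to the Euclidean proof of Lemma~\ref{lemma:discretization-error}; both rely on the same hypothesis of $\mu$-strong convexity of $\psi$, so neither gains generality.
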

\begin{proof}
The proof uses similar arguments as the proof of Lemma~\ref{lemma:discretization-error}.
By the definitions of $E_k$ and $\cc_k$ and $D_{\psi^*}(\vz, \vz)=0,$ we have:
\begin{align}
    E_{k} = B_{k-1}\left[f(\vy_{k}) - f(\vy_{k-1})\right] + \sum_{i=0}^{k-1}a_i\left[D_{\psi^*}(\vz_{k}, \vz_{i}) - D_{\psi^*}(\vz_{k-1}, \vz_{i})\right]. \label{eq:discr-err-1-b}
\end{align}
As $f$ is $L$-smooth and $\epsilon_H$-weakly convex, we have:
\begin{align}
    f(\vy_{k}) - f(\vy_{k-1}) =&\ f(\vy_{k}) - f(\vx_{k}) + f(\vx_{k}) - f(\vy_{k-1})\notag\\
    \leq & \innp{\nabla f(\vx_{k}), \vx_{k} - \vy_{k-1}} - \frac{1}{2L}\|\nabla f(\vx_k)\|_*^2 + \frac{\epsilon_H}{2}\|\vx_{k} - \vy_{k-1}\|^2.\label{eq:discr-err-3-b} 
\end{align}

By the first part of Fact~\ref{fact:bregman-properties} and the definition of $\vz_{k}$, $\forall i \leq k-1$:
\begin{align*}
    D_{\psi^*}(\vz_{k}, \vz_{i}) =&\; D_{\psi^*}(\vz_{k-1}, \vz_{i}) + \innp{\nabla\psi^*(\vz_{k-1}) - \nabla\psi^*(\vz_{i}), \vz_{k} - \vz_{k-1}} + D_{\psi^*}(\vz_{k}, \vz_{k-1})\\
    =&\; D_{\psi^*}(\vz_{k-1}, \vz_{i}) - \frac{a_k}{A_k}H_k \innp{\nabla f(\vx_{k}), \nabla\psi^*(\vz_{k-1}) - \nabla\psi^*(\vz_{i})}\\
    &+ D_{\psi^*}(\vz_{k}, \vz_{k-1}). 
\end{align*}
Hence, we have:
\begin{align}
    \sum_{i=0}^{k-1}& a_i\left[D_{\psi^*}(\vz_{k}, \vz_{i}) - D_{\psi^*}(\vz_{k-1}, \vz_{i})\right]\notag\\
    &= A_{k-1}D_{\psi^*}(\vz_{k}, \vz_{k-1})
    - \frac{a_k}{A_k}H_k \Big\langle{\nabla f(\vx_{k}), A_{k-1}\nabla\psi^*(\vz_{k-1}) - \sum_{i=0}^{k-1}\nabla\psi^*(\vz_{i})}\Big\rangle\notag\\
    &= A_{k-1}D_{\psi^*}(\vz_{k}, \vz_{k-1}) - B_{k-1}\innp{\nabla f(\vx_{k}), \vx_k - \vy_{k-1}}, \label{eq:discr-err-2-b}
\end{align}
where the last line is by $B_k = A_{k-1}H_k$ and the definition of $\vx_k,$ which can implies
$$\vx_k - \vy_{k-1} = \frac{a_k}{A_k A_{k-1}}(A_{k-1}\nabla \psi^*(\vz_{k-1}) - \sum_i^{k-1}a_i \nabla \psi^*(\vz_i)).$$ 
By Fact~\ref{fact:smoothness-sc-duality}, 
$D_{\psi^*}(\vz_{k}, \vz_{k-1})\leq \frac{1}{2\mu} \|\vz_{k} - \vz_{k-1}\|_*^2 = \frac{1}{2\mu} \frac{{a_k}^2{H_k}^2}{{A_k}^2}\|\nabla f(\vx_k)\|_*^2.$ 
Hence, combining Eqs.\eqref{eq:discr-err-1-b}--\eqref{eq:discr-err-2-b}:
\begin{align*}
    E_{k} \leq &\ B_{k-1}\Big(-\frac{1}{2L} + \frac{{a_k}^2}{2\mu{A_{k}}^2}H_k\Big)\|\nabla f(\vx_k)\|_*^2 + B_{k-1}\frac{\epsilon_H}{2}\|\vx_{k} - \vy_{k-1}\|^2\\
    =& -\frac{(1-c)B_{k-1}}{2L}\|\nabla f(\vx_k)\|_*^2 + B_{k-1}\frac{\epsilon_H}{2}\|\vx_{k} - \vy_{k-1}\|^2,
\end{align*}
where we have used that $B_{k-1} = H_k A_{k-1}$ and $\frac{{a_k}^2}{{A_k}^2} = c \cdot \frac{\mu}{L H_k}.$
\end{proof}

\paragraph{Final Convergence Bound}

Since the proof of Proposition~\ref{prop:ncvx-discr-err} only required that $\psi$ be $\mu$-strongly convex and that $\vx_k - \vy_{k-1} = \frac{a_k}{A_k}(\nabla \psi^*(\vz_{k-1})- \frac{1}{A_{k-1}}\sum_{i=0}^{k-1}a_i \nabla \psi^*(\vz_i)),$ the same claim holds for the iterates of~\eqref{eq:gen-mom-method-banach}. Thus, we can draw a similar conclusion as for~\eqref{eq:gen-mom-method}.

\begin{theorem}\label{thm:conv-banach}
Let $\vx_{k},\, \vy_{k},\, \vz_{k}$ evolve according to~\eqref{eq:gen-mom-method-banach}, for arbitrary $\vx_{0} = \vy_{0}\in E$ such that $\vx_{0} = \nabla\psi^*(\vz_{0}),$ where $\psi(\vx)$ is strongly convex w.r.t.~$\|\cdot\|.$ Let $\frac{{a_i}^2}{{A_i}^2} = c \frac{\mu}{L H_i}$ for some $c \in [0, 1]$ and $B_{k-1} = A_{k-1}H_k.$ If for some $c' \in [0, 1]$ and all $i \leq k$:
$
(1-c')\big(\frac{1}{B_{i-1}}- \frac{1}{B_i}\big) \geq \frac{c\epsilon_H}{L}\big(\frac{1}{B_i} - \frac{1}{B_k}\big),
$
then:
\begin{align*}
    &c'\sum_{i=1}^k \Big[\frac{1}{B_{i-1}} - \frac{1}{B_i}\Big]\sum_{j=0}^{i-1} a_j D_{\psi^*}(\vz_{i}, \vz_{j}) \; +\;   \frac{c(1-c)}{2L}\sum_{i=1}^{k} \Big(1 - \frac{B_{i-1}}{B_k}\Big) \|\nabla f(\vx_i)\|_*^2\\
    &\hspace{1.5in} \leq f(\vx_0) - f(\vx^*).
\end{align*}
\end{theorem}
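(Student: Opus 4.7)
The proof will closely mirror that of Theorem~\ref{thm:min-grad-norm}, with Lemma~\ref{lemma:discretization-error} replaced by its non-Euclidean counterpart Lemma~\ref{lemma:discretization-error-banach}. The first step is to apply the structural Lemma~\ref{lemma:avg-fn-value-dec} to the iterates of \eqref{eq:gen-mom-method-banach}. Using $\vy_0 = \vx_0$ and the fact that $\frac{1}{B_k}\sum_{i=0}^k b_i f(\vy_i) \geq f(\vx^*)$, this yields the master identity
$$
\sum_{i=1}^{k} \Big(\frac{1}{B_{i-1}} - \frac{1}{B_i}\Big) \sum_{j=0}^{i-1} a_j D_{\psi^*}(\vz_{i}, \vz_{j}) - \sum_{i=1}^{k}\Big(\frac{1}{B_{i-1}} - \frac{1}{B_k}\Big)E_i \leq f(\vx_0) - f(\vx^*).
$$
It remains to bound $E_i$ from above in a way that converts the second sum into the two non-negative quantities appearing in the theorem statement.

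Next, I will plug in the discretization-error estimate from Lemma~\ref{lemma:discretization-error-banach} and control the non-negative $\epsilon_H \|\vx_i - \vy_{i-1}\|^2$ contribution via Proposition~\ref{prop:ncvx-discr-err}. The latter proposition remains applicable to \eqref{eq:gen-mom-method-banach} because its proof only uses the identity $\vy_k = \frac{1}{A_k}\sum_{i=0}^k a_i \nabla\psi^*(\vz_i)$ (which one checks directly from the recursion for $\vy_k$ in \eqref{eq:gen-mom-method-banach}, although here it follows even more transparently from the expression for $\vx_k - \vy_{k-1}$ that is built into the first line of \eqref{eq:gen-mom-method-banach}). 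Using $B_{i-1} = A_{i-1}H_i$ and $\frac{a_i^2}{A_i^2} = \frac{c\mu}{L H_i}$, the coefficient $B_{i-1}\frac{\epsilon_H}{\mu}\frac{a_i^2}{A_i^2 A_{i-1}}$ simplifies to $\frac{c\epsilon_H}{L}$, giving
$$
E_i \leq \frac{c\,\epsilon_H}{L}\sum_{j=0}^{i-2} a_j D_{\psi^*}(\vz_{i-1}, \vz_j) - \frac{(1-c)B_{i-1}}{2L}\|\nabla f(\vx_i)\|_*^2.
$$

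Finally, I will substitute this bound into the master identity. The gradient-norm pieces aggregate, through the multiplier $(\frac{1}{B_{i-1}} - \frac{1}{B_k})$, into $\frac{(1-c)}{2L}\sum_{i=1}^k(1 - \frac{B_{i-1}}{B_k})\|\nabla f(\vx_i)\|_*^2$, which upper bounds the claimed $\frac{c(1-c)}{2L}$ expression since $c \leq 1$. The Bregman-divergence contribution from $E_i$ requires an index shift: interchanging the order of summation and reindexing $i \mapsto i+1$ aligns $\sum_{j=0}^{i-2} a_j D_{\psi^*}(\vz_{i-1}, \vz_j)$ with $\sum_{j=0}^{i-1}a_j D_{\psi^*}(\vz_i, \vz_j)$ (with a telescoping residual that is controlled by $\frac{1}{B_i} - \frac{1}{B_k}$), and the hypothesis $(1-c')(\frac{1}{B_{i-1}} - \frac{1}{B_i}) \geq \frac{c\epsilon_H}{L}(\frac{1}{B_i} - \frac{1}{B_k})$ then guarantees that the net coefficient of each $\sum_{j=0}^{i-1}a_j D_{\psi^*}(\vz_i, \vz_j)$ is at least $c'(\frac{1}{B_{i-1}} - \frac{1}{B_i})$, yielding the claimed inequality.

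The main obstacle is precisely this bookkeeping for the reindexing, which is the analogue of the corresponding manipulation in the proof of Theorem~\ref{thm:min-grad-norm} and is slightly cleaner here because Lemma~\ref{lemma:discretization-error-banach} already produces $\|\nabla f(\vx_i)\|_*^2$ directly, bypassing any analogue of Proposition~\ref{prop:BD-to-grad-norms}.
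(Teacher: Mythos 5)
Your proposal is correct and follows essentially the same route as the paper, which omits the proof of Theorem~\ref{thm:conv-banach} precisely because it repeats the argument of Theorem~\ref{thm:min-grad-norm} with Lemma~\ref{lemma:discretization-error} replaced by Lemma~\ref{lemma:discretization-error-banach} and Proposition~\ref{prop:ncvx-discr-err} reused via the expression for $\vx_k - \vy_{k-1}$ built into~\eqref{eq:gen-mom-method-banach}. One small inaccuracy: in~\eqref{eq:gen-mom-method-banach} the identity $\vy_k = \frac{1}{A_k}\sum_{i=0}^k a_i \nabla\psi^*(\vz_i)$ does \emph{not} follow from the recursion (there $\vy_k$ is a gradient step), but this is harmless since, as you note, the only fact Proposition~\ref{prop:ncvx-discr-err} needs is the formula for $\vx_k - \vy_{k-1}$, which holds by construction.
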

The proof is the same as the proof of Theorem~\ref{thm:min-grad-norm} and is thus omitted.

The main difference between~\eqref{eq:gen-mom-method} and~\eqref{eq:gen-mom-method-banach} in terms of the conclusions about the convergence to stationary points is that, because we are no longer assuming strong convexity of $\psi^*,$ we can no longer bound $\sum_{j=0}^{i-1} a_j D_{\psi^*}(\vz_{i}, \vz_{j})$ below as a function of the norms of the gradients. However, the term $\frac{c(1-c)}{2L}\sum_{i=1}^{k} (1 - \frac{B_{i-1}}{B_k}) \|\nabla f(\vx_i)\|_*^2$ from the theorem statement is still sufficient for obtaining $1/k$ asymptotic convergence, as shown in the following corollary.

\begin{restatable}{corollary}{corbanach}\label{cor:banach-1/k-conv}
Let $\vx_k,\, \vy_k,\, \vz_k$ evolve as in~\eqref{eq:gen-mom-method-banach}, for some
$\mu$-strongly convex $\psi,$ where $\mu > 0,$ and where $\frac{{a_i}^2}{{A_i}^{2}} = c\frac{\mu}{L H_i}$, $c\in (0, 1)$. 
\begin{itemize}
\item[(i)] If $\frac{\mu}{L H_i}=1$, then $\forall k\geq 1$:
$
\min_{1 \leq i \leq k} \|\nabla f(\vx_i)\|_*^2 =  O\big(\frac{L(f(\vx_0) - f(\vx^*))}{c(1-c)k}\big).
$

In particular, for $c = \frac{1}{2}$:
$
\min_{1 \leq i \leq k} \|\nabla f(\vx_i)\|_*^2 =  O\big(\frac{L(f(\vx_0) - f(\vx^*))}{k}\big).
$
\item[(ii)] If $f$ is convex and $H_i = {A_i}^{\lambda},$ then:
$
\min_{1 \leq i \leq k} \|\nabla f(\vx_i)\|_*^2 =  O\big(\frac{L(f(\vx_0) - f(\vx^*))}{c(1-c)k}\big).
$

In particular, for $c = \frac{1}{2}$:
$
\min_{1 \leq i \leq k} \|\nabla f(\vx_i)\|_*^2 =  O\big(\frac{L(f(\vx_0) - f(\vx^*))}{k}\big).
$
\end{itemize}
\end{restatable}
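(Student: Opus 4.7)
\textbf{Proof plan for Corollary~\ref{cor:banach-1/k-conv}.} The strategy is to invoke Theorem~\ref{thm:conv-banach} and extract a $1/k$ rate from its second term, namely
\[
    \frac{c(1-c)}{2L}\sum_{i=1}^{k} \Big(1 - \frac{B_{i-1}}{B_k}\Big) \|\nabla f(\vx_i)\|_*^2 \leq f(\vx_0) - f(\vx^*).
\]
Since $\min_{1\leq i\leq k}\|\nabla f(\vx_i)\|_*^2 \leq \frac{\sum_i (1-B_{i-1}/B_k)\|\nabla f(\vx_i)\|_*^2}{\sum_i (1-B_{i-1}/B_k)}$, the whole proof reduces to (a)~verifying the hypothesis $(1-c')(\frac{1}{B_{i-1}} - \frac{1}{B_i}) \geq \frac{c\epsilon_H}{L}(\frac{1}{B_i} - \frac{1}{B_k})$ with $c'$ bounded away from $1$ and (b)~showing $\sum_{i=1}^k (1 - B_{i-1}/B_k) = \Theta(k)$.

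For part (i), the assumption $\frac{\mu}{LH_i}=1$ combined with $\frac{a_i^2}{A_i^2} = \frac{c\mu}{LH_i} = c$ gives $a_i/A_i = \sqrt{c}$, which is constant. Consequently, $A_i = A_{i-1}/(1-\sqrt{c})$ and $B_i = A_{i-1}H_i = (\mu/L)A_{i-1}$ grows geometrically with common ratio $1/(1-\sqrt{c})$. I would then rewrite the Theorem~\ref{thm:conv-banach} hypothesis by multiplying both sides by $B_i$ to obtain $(1-c')(B_i/B_{i-1} - 1) \geq \frac{c\epsilon_H}{L}(1 - B_i/B_k)$; using $\epsilon_H \leq L$ and $1 - B_i/B_k \leq 1$, it suffices to check $(1-c')\sqrt{c}/(1-\sqrt{c}) \geq c$, which holds for any $c' \leq 1-\sqrt{c}$. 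With this choice, I can discard the nonnegative first term in the theorem and use the geometric decay of $B_{i-1}/B_k = (1-\sqrt{c})^{k-i}$ to get $\sum_{i=1}^k(1-B_{i-1}/B_k) = k - \frac{1-(1-\sqrt{c})^k}{\sqrt{c}} = k - O(1/\sqrt{c})$, which is $\Theta(k)$ for fixed $c \in (0,1)$.

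For part (ii), convexity of $f$ gives $\epsilon_H = 0$, so the hypothesis of Theorem~\ref{thm:conv-banach} is trivially satisfied with $c' = 0$, and I again discard the nonnegative first term. The remaining task is to show $\sum_{i=1}^k (1 - B_{i-1}/B_k) = \Theta(k)$ when $H_i = A_i^\lambda$. The defining recursion $\frac{a_i^2}{A_i^{2-\lambda}} = \frac{c\mu}{L}$ (identical to the one analyzed in the proof of Corollary~\ref{cor:gmd-convex}) yields the growth rates from Eq.~\eqref{eq:ai-growth}, so $B_i = A_{i-1}A_i^\lambda$ grows at least as $i^{2(1+\lambda)/\lambda}$ for $\lambda > 0$ (at least quadratically since $2(1+\lambda)/\lambda \geq 2$) and geometrically for $\lambda = 0$. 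In either regime $B_{i-1}/B_k$ is summable in $(k-i)$, so only a constant number of terms near $i=k$ fall short of $1/2$, and the sum is $\Theta(k)$.

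The only mildly delicate step is confirming the $\Theta(k)$ lower bound uniformly in $\lambda \in [0,2]$ for part (ii); this would proceed by the same asymptotic bookkeeping used in the proof of Corollary~\ref{cor:gmd-convex}, noting that for any super-linearly growing $B_i$ one has $\sum_{i=1}^k(1-B_{i-1}/B_k) \geq k/2 - o(k)$. Once these two observations are in place, rearranging gives the stated $O(L(f(\vx_0)-f(\vx^*))/(c(1-c)k))$ bound, and the specialization $c = 1/2$ is immediate.
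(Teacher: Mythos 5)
Your proposal is correct and follows essentially the same route as the paper: invoke Theorem~\ref{thm:conv-banach}, verify its hypothesis (with $c'\leq 1-\sqrt{c}$ in the constant-$H_i$ case, trivially via $\epsilon_H=0$ in the convex case), discard the nonnegative Bregman term, and use the geometric (resp.\ at least cubic) growth of $B_i$ from Eq.~\eqref{eq:ai-growth} to get $\sum_{i=1}^k\bigl(1-B_{i-1}/B_k\bigr)=\Theta(k)$. The only nitpick is the phrase ``only a constant number of terms near $i=k$ fall short of $1/2$,'' which for polynomially growing $B_i$ should be a constant \emph{fraction}; your later, more careful bound $\sum_{i=1}^k(1-B_{i-1}/B_k)\geq k/2-o(k)$ already covers this, so nothing in the argument is affected.
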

\begin{proof}
The first part of the corollary follows because under the assumption that $\frac{\mu L}{H_i}=1$, the condition of Theorem~\ref{thm:conv-banach} can be satisfied with $c' = 1 - \sqrt{c}\geq 0,$ as discussed in the previous subsection. Further, in this case $B_i$ grows exponentially fast and $\sum_{i=1}^{k} \big(1 - \frac{B_{i-1}}{B_k}\big) = \Omega(k).$ As $B_i$ is increasing and Bregman divergences are non-negative, we have:
$$
\frac{c(1-c)}{2L}\sum_{i=1}^{k} \Big(1 - \frac{B_{i-1}}{B_k}\Big) \|\nabla f(\vx_i)\|_*^2 \leq f(\vx_0) - f(\vx^*),
$$
which implies the claimed statement.

For the second part of the corollary, convexity of $f$ implies that the condition from Theorem~\ref{thm:conv-banach} can be satisfied with $c' = 1.$ As discussed in the proof of Corollary~\ref{cor:gmd-convex}, for $H_i = {A_i}^{\lambda},$ $B_i$ grows at least cubically in $i,$ which, again, implies that $\sum_{i=1}^{k} (1 - \frac{B_{i-1}}{B_k}) = \Omega(k),$ and leads to the same conclusion as in the first part of the proof.
\end{proof}
%
%
\section{Conclusion}
We presented a generic Hamiltonian-based framework for the analysis of general momentum methods in \markupdelete{Banach} \markupadd{non-Euclidean} spaces and in the settings of both convex and nonconvex optimization. Several questions that merit further investigation remain. For example, while convergence to stationary points in Euclidean spaces is well-understood~\cite{carmon2017lower,kim2018optimizing}, much less is known in terms of both upper and lower bounds in general \markupdelete{Banach} \markupadd{non-Euclidean} spaces. Another interesting direction for future research is a rigorous characterization of the use of momentum to escape shallow local minima. Finally, it is worth noting that, even though it has led to intuitive interpretations and fruitful results in classical and more recent optimization literature, the continuous-time perspective in optimization is not a panacea and certain phenomena are crucially discrete (see~\cite{ascher2019discrete} for a stimulating discussion on this topic). Thus, it is interesting to also understand the limitations of the continuous-time perspective.  
\section*{Acknowledgements} We thank Guilherme Fran\c{c}a, Michael Muehlebach, and Uri Ascher for useful comments and suggestions.  
%
%
\bibliographystyle{siamplain}
\bibliography{references}
%
%
\appendix
\section{Proof of Lemma~\ref{lemma:ct-gen-mom-cq}}
\label{appx:ct-proofs}
%
%
%
%
%
%
 The simplest way of proving the lemma is by directly computing $\frac{\dd}{\dd t}\cc_t^f$ and $\frac{\dd}{\dd t}\cc_t$, and showing that~\eqref{eq:ct-mom-dyn} implies that both are equal to zero. Here, we provide a longer, but more constructive proof that highlights how $\cc_t^f, \, \cc_t$ arise as invariants of~\eqref{eq:momentum-ham}.
 
 As $\overline{\vx}_t,\, \vz_t$ evolve according to the equations of motion of~\eqref{eq:momentum-ham}, we have that $\frac{\dd}{\dd t}\chm (\overline{\vx}_t, \vz_t, \alpha_t) = \frac{\partial}{\partial t}\chm(\overline{\vx}_t, \vz_t, \alpha_t) = \frac{\dd }{\dd \alpha_t}\chm(\overline{\vx}_t, \vz_t, \alpha_t) \cdot \dot{\alpha}_t.$ 
 Observe that: 
\begin{align*}
\frac{\dd}{\dd \alpha_t}\chm(\overline{\vx}_t, \vz_t, \alpha_t) &= h'(\alpha_t) f(\overline{\vx}_t/\alpha_t) - h(\alpha_t)\innp{\nabla f(\overline{\vx}_t/\alpha_t), \frac{\overline{\vx}_t}{{\alpha_t}^2}}\\
&= h'(\alpha_t)f(\vx_t) - \frac{h(\alpha_t)}{\alpha_t}\innp{\nabla f(\vx_t), \vx_t}.  
\end{align*}
Hence, we have that: 
\begin{align*}
\frac{\dd}{\dd t}\left(h(\alpha_t)f(\vx_t)+\psi^*(\vz_t)\right) &= \frac{\dd h(\alpha_t)}{\dd t} f(\vx_t) - h(\alpha_t)\frac{\dot{\alpha}_t}{\alpha_t} \innp{\nabla f(\vx_t), \vx_t}.
\end{align*}
Equivalently, using the product rule of differentiation:
\begin{equation}\label{eq:gen-invariant}
    h(\alpha_t)\frac{\dd}{\dd t}f(\vx_t) + \frac{\dd}{\dd t}\psi^*(\vz_t) = - h(\alpha_t)\frac{\dot{\alpha}_t}{\alpha_t}\innp{\nabla f(\vx_t), \vx_t}. 
\end{equation}
Integrating both sides of~\eqref{eq:gen-invariant} from 0 to $t$ and using integration by parts leads to $\cc_t^f.$ 

To obtain $\cc_t,$ observe (from~\eqref{eq:ct-mom-dyn}) that $\dot{\vz}_t = - h(\alpha_t)\frac{\dot{\alpha}_t}{\alpha_t}\nabla f(\vx_t)$. Multiplying both sides of~\eqref{eq:gen-invariant} by $\alpha_t$, we thus have:
\begin{equation}\label{eq:ct-diff-form}
    \alpha_t h(\alpha_t)\frac{\dd}{\dd t}f(\vx_t) + \alpha_t \frac{\dd}{\dd t}\psi^*(\vz_t) =  \alpha_t\innp{\dot{\vz}_t, \vx_t}.
\end{equation}
As for $\cc_t^f,$ to obtain $\cc_t,$ we integrate both sides of the last equation from 0 to $t$. Integrating the left-hand side and applying integration by parts gives:
\begin{equation}\label{eq:integral-of-lhs}
    \begin{aligned}
        \int_0^t \Big( \alpha_{\tau}  h(\alpha_{\tau}) \frac{\dd}{\dd {\tau}}f(\vx_{\tau}) +  \alpha_{\tau}& \frac{\dd}{\dd {\tau}}\psi^*(\vz_{\tau}) \Big)\dd\tau\\
        = &\; h(\alpha_t)\alpha_t f(\vx_t) - h(\alpha_0)\alpha_0 f(\vx_0) - \int_0^t \frac{\dd (h(\alpha_{\tau})\alpha_{\tau})}{\dd\tau} f(\vx_{\tau})\dd \tau \\
        &+ \alpha_t \psi^*(\vz_t) - \alpha_0 \psi^*(\vz_0) - \int_0^t \dot{\alpha}_{\tau} \psi^*(\vz_{\tau}) \dd\tau.
    \end{aligned}
\end{equation}
On the other hand, by the definition of $\vx_t$ in~\eqref{eq:ct-mom-dyn}, $\vx_t = \frac{\alpha_0}{\alpha_t}\vx_0 + \frac{1}{\alpha_t}\int_0^t \dot{\alpha_{\sigma}}\nabla \psi^*(\vz_{\sigma})\dd\sigma.$ Thus, integrating the right-hand side of~\eqref{eq:ct-diff-form}, we have:
\begin{equation}\label{eq:integral-of-rhs-1}
    \begin{aligned}
        \int_0^t \alpha_\tau \innp{\dot{\vz}_{\tau}, \vx_{\tau}}\dd\tau &= \int_0^t  \innp{ \dot{\vz}_{\tau},\, \alpha_0 \vx_0 + \int_0^{\tau} \nabla \psi^*(\vz_{\sigma})\dot{\alpha}_{\sigma}\dd\sigma} \dd\tau\\
        &= \alpha_0\innp{\vz_t - \vz_0, \nabla\psi^*(\vz_0)} + \int_0^t \int_0^{\tau} \innp{ \dot{\vz}_{\tau},\,  \nabla \psi^*(\vz_{\sigma})}\dot{\alpha}_{\sigma}\dd\sigma \dd\tau,
    \end{aligned}
\end{equation}
where we have used $\vx_0 = \nabla \psi^*(\vz_0).$ By elementary calculus, it is possible to exchange the order of integration on the right-hand side of~\eqref{eq:integral-of-rhs-1}, which leads to:
\begin{equation}\label{eq:integral-of-rhs-2}
    \begin{aligned}
        \int_0^t \alpha_\tau \innp{\dot{\vz}_{\tau}, \vx_{\tau}}\dd\tau &= \int_0^t  \innp{ \dot{\vz}_{\tau},\, \alpha_0 \vx_0 + \int_0^{\tau} \nabla \psi^*(\vz_{\sigma})\dot{\alpha}_{\sigma}\dd\sigma} \dd\tau\\
        &= \alpha_0\innp{\vz_t - \vz_0, \nabla\psi^*(\vz_0)} + \int_0^t \innp{ {\vz}_{t} - \vz_{\sigma},\,  \nabla \psi^*(\vz_{\sigma})}\dot{\alpha}_{\sigma}\dd\sigma.
    \end{aligned}
\end{equation}
By the definition of Bregman divergence, $D_{\psi*}(\vz, \vw) = \psi^*(\vz) - \psi^*(\vw) - \nabla \psi^*(\vw, \vz - \vw).$ Thus, combining Eqs.~\eqref{eq:integral-of-lhs} and~\eqref{eq:integral-of-rhs-2} leads to $\cc_t = 0.$ As this holds for an arbitrary $t,$ the proof is complete.

\end{document}